\documentclass[twoside,11pt]{article}

\usepackage{my-jmm}


\begin{document}

\Title{Alternative views on fuzzy numbers and their application\\ to fuzzy differential equations}

\Author{Akbar H. Borzabadi$^\dag$\correspond,  Mohammad Heidari$^{\ddag}$, Delfim F. M. Torres$^{\S}$}

\Address{$^{\dag}$Department of Mathematics, University of Science and Technology
of Mazandaran, Behshahr, Iran\\
$^{\ddag}$Young Researchers and Elite Club, Ayatollah Amoli Branch,
Islamic Azad University, Amol, Iran\\
$^{\S}$Center for Research and Development in Mathematics and Applications (CIDMA),\\
Department of Mathematics, University of Aveiro, 3810-193 Aveiro, Portugal}

\Email{borzabadi@mazust.ac.ir, m.heidari27@gmail.com, delfim@ua.pt}
\Markboth{A. H. Borzabadi, M. Heidari, D. F. M. Torres}{Alternative views on fuzzy numbers ...}

\Abstract{We consider fuzzy valued functions from two parametric
representations of $\alpha$-level sets. New concepts are introduced
and compared with available notions.
Following the two proposed approaches,
we study fuzzy differential equations.
Their relation with Zadeh's extension principle
and the generalized Hukuhara derivative is discussed. Moreover,
we prove existence and uniqueness theorems for fuzzy differential equations.
Illustrative examples are given.}

\Keywords{Parametric representation of fuzzy numbers, fuzzy valued functions, fuzzy differential equations.}
\AMS{2010}{03E72; 34A07.}


\section{Introduction}

Fuzzy numbers and fuzzy arithmetic operations were first introduced
in the seventies of the 20th century by Zadeh \cite{Zadeh}
and Dubois and Prade \cite{Dubois}, respectively. Fuzzy set theory
is nowadays a powerful mathematical tool for modeling uncertainty
and processing vague or subjective information \cite{klir,Melliani}.
The field has been developed in several major directions and find
applications in many different real-world problems. Roughly speaking,
two approaches to fuzzy arithmetic are available: one based on Zadeh's
extension principle \cite{Bedeebook}, the other based on
interval arithmetic \cite{Goetschel}.

One of the major applications of fuzzy arithmetic is given
by fuzzy differential equations (FDEs).
Indeed, FDEs provide a natural way to model dynamic systems
under uncertainty \cite{Zadeh2}, and have shown to be an effective and useful
technique in a large number of different areas, such as in civil engineering
\cite{Oberguggenberger}, hydraulics \cite{Bencsik},
and population dynamics \cite{Barros}.
There are at least three approaches to FDEs:
the first based on Zadeh's extension principle, a second one on families
of differential inclusions, and a third using some appropriate
derivative for fuzzy valued functions. Under certain conditions,
the three approaches are equivalent, although in general they differ
\cite{Chalco2008,Chalco2009,Chalco2007}. Different ways can be also used
to realize each one of the three approaches.

One of the first approaches to FDEs via Zadeh's extension principle
was proposed by Buckley and Feuring \cite{Buckley}, where a solution
to a FDE is generated using Zadeh's extension principle on a
classical solution of its ordinary differential equation.
A second way has been proposed by Barros et al.: to use Zadeh's extension principle
to define a derivative for fuzzy valued functions and then use it to develop the
corresponding theory of fuzzy dynamical systems \cite{Barros}. In this approach,
the utilization of Zadeh's extension principle is, in practice, quite difficult,
and different formulations of the same ordinary differential equation lead to the same solution,
which ensures its uniqueness. To overcome the difficulties, a different approach to FDEs was suggested
by H\"{u}llermeier \cite{Hullermeier} (see also \cite{Chalco2013}), by taking the $\alpha$-level
sets of the parameters, the initial value and the solution, and converting the given differential equation into
a family of differential inclusions. Other approaches exist, e.g., based on the definition of fuzzy differences.
Then, different ways to study FDEs correspond to many different suggestions given in the literature
on how to define a fuzzy difference.

Initially, Kaleva \cite{Kaleva} formulated the concept of solution to a FDE by using differentiability
in the sense of the Hukuhara difference, which was first introduced by Puri and Ralescu \cite{Puri}.
It is well-known that the usual Hukuhara difference between two fuzzy numbers
exists only under very restrictive conditions. For details, see \cite{Diamond,Diamond2,Kaleva}.
To overcome this shortcoming, Stefanini and Bede proposed the generalized Hukuhara
difference of two fuzzy numbers, which exists in many more situations than the Hukuhara difference
\cite{Stefanini,Stefanini2}. The same authors extended this line of research
by introducing the so-called generalized difference, which has a large
advantage over previous concepts, namely, it always exists \cite{Bede2013,Stefanini}.
Independently, in 2015, Gomes and Barros published a note on the generalize difference
in order to assure existence \cite{Gomes2015}. The literature on fuzzy differentiability concepts
is now vast \cite{Bedeebook,Bede2005,Bede2013}. In particular, based on the notion of strong generalized
differentiability, which follows from the Hukuhara difference and generalized Hukuhara differentiability,
FDEs have been extensively investigated and several results on existence and uniqueness of solutions
are available in the literature \cite{Bede2005,Bede2010,Chalco2008,Lupulescu}.

Here we propose new differentiability and integrability concepts for fuzzy
valued functions. Several properties of the new concepts are investigated
and compared with similar concepts that have been recently introduced.
Using the new notions for fuzzy derivatives and integrals,
Newton--Leibniz type formulas are non-trivially extended to the fuzzy case.
Then, using the obtained results, FDEs are investigated
under two different approaches. In the first approach, a fuzzy problem
is transformed into a crisp one. It is shown, under suitable assumptions,
that this approach reproduces the same solutions as those obtained
via differential inclusions and Zadeh's extension principle. Our second approach
follows from the proposed notion of derivative for a fuzzy valued function. 
We show that fuzzy solutions obtained from this approach coincide with those 
obtained via the generalized Hukuhara differentiability concept.

The paper is organized as follows. In Section~\ref{sec:02},
parametric representations of fuzzy numbers and their main properties are recalled.
Our definitions of fuzzy derivative and integral for fuzzy valued functions,
and their relation and comparison with other recent proposals, is given in Section~\ref{sec:03}.
Section~\ref{sec:04} is dedicated to the study of fuzzy differential equations.
The paper ends with conclusions, in Section~\ref{sec:05}.


\section{Preliminaries}
\label{sec:02}

There are several models to obtain parametric representations of fuzzy numbers
and their arithmetic operators: see, e.g., \cite{Chalco20,Giachetti,Lodwick,Stefaninia2006}.
In \cite{Chalco20}, Chalco-Cano et al. considered two parametric representations
for each fuzzy number $u$ as follows:
\begin{itemize}
\item[(i)] the decreasing convex constraint function
$u: [0,1]^2\rightarrow \mathbb{R}$ is defined by
\begin{equation}
\label{eqq1}
u_{\alpha}(\lambda)=\lambda u^-_{\alpha}+(1-\lambda)u^+_{\alpha},
\quad 0 \leq \lambda, \alpha \leq 1;
\end{equation}

\item[(ii)] and the increasing convex constraint
function $u: [0,1]^2\rightarrow \mathbb{R}$ by
\begin{equation}
\label{eqq2}
u_{\alpha}(\lambda)=\lambda u^+_{\alpha}+(1-\lambda)u^-_{\alpha},
\quad 0\leq \lambda, \alpha \leq 1.
\end{equation}
\end{itemize}
Based on the parametric representations \eqref{eqq1} and \eqref{eqq2},
a variant of the single-level constraint interval arithmetic (SLCIA)
was proposed, in order to exhibit the operations between fuzzy numbers, as
$$
[u]_{\alpha} \circledast [v]_{\alpha}=\left[\min_{0\leq \lambda
\leq 1}(u_{\alpha}(\lambda)\ast v_{\alpha}(\lambda)),
\max_{0\leq \lambda \leq 1}(u_{\alpha}(\lambda)\ast v_{\alpha}(\lambda))\right],
$$
where $\ast \in \{+,-,\times, \div\}$. Unlike SLCIA, Lodwick and Dubois \cite{Lodwick}
introduced the constraint interval arithmetic (CIA) to present another
fuzzy arithmetic, using a distinct parameter $\lambda$ for each distinct
interval involved in operations, as
\begin{multline*}
[u]_{\alpha} \circledast [v]_{\alpha}
=\Big{[}\min_{0\leq \lambda_u,\lambda_v \leq 1}\left\{(u^-_{\alpha}
+\lambda_u(u^+_{\alpha}-u^-_{\alpha}))\ast
\left(v^-_{\alpha}+\lambda_v(v^+_{\alpha}-v^-_{\alpha})\right) \right\}, \\
\max_{0\leq \lambda_u,\lambda_v \leq 1}
\left\{(u^-_{\alpha}+\lambda_u(u^+_{\alpha}-u^-_{\alpha}))
\ast (v^-_{\alpha}+\lambda_v(v^+_{\alpha}-v^-_{\alpha})) \right\} \Big{]}.
\end{multline*}
SLCIA and CIA have some remarkable properties \cite{Chalco20,Lodwick}.
Recently, Heidari et al. \cite{Heidari2017,Heidari2016}, based on the parametric
representations \eqref{eqq1} and \eqref{eqq2}, found the solutions
of fuzzy variational and unconstrained fuzzy valued optimization problems.
Moreover, Ramezanadeh et al., by considering these parametric representations
for interval numbers, investigated interval differential
equations with two different approaches \cite{Ramezanadeh}.
As an extension of the proposed approach in \cite{Ramezanadeh}, here
the $\alpha$-level set of fuzzy valued functions is expressed as a set of classical functions
using the parametric representations \eqref{eqq1} and \eqref{eqq2}.


\subsection{Basic concepts of fuzzy set theory}

We denote by $\mathcal{F}(\mathbb{R})$ the set of fuzzy numbers, i.e., normal, fuzzy convex, upper
semicontinuous and compactly supported fuzzy sets defined over the real line $\mathbb{R}$.
For $0<\alpha\leq 1$, the $\alpha$-level set of $A \in \mathcal{F}(\mathbb{R})$
is defined by $[A]_{\alpha} = \{x \in \mathbb{R}| A(x) \geq \alpha\}$ and for $\alpha=0$
it is the closure of the support, i.e., $[A]_{0} =\overline{\{x \in \mathbb{R}| A(x) > 0\}}$.
The relationship between crisp sets and fuzzy sets can be obtained from the following theorems and lemma.

\begin{theorem}[Stacking theorem, Negoi\c t\u a and Ralescu \cite{Negoita}]
\label{th22}
A fuzzy number $A$ satisfies the following conditions:\\
$(1)$ its $\alpha$-level sets are nonempty closed intervals for all $\alpha\in[0, 1]$;\\
$(2)$ $[A]_{\alpha} \subseteq [A]_{\beta}$ for all $0 \leq \beta \leq \alpha \leq 1$;\\
$(3)$ $\cap_{i=1}^{\infty} [A]_{\alpha_i}=[A]_{\alpha}$ for any sequence
$\alpha_i$ that converges from below to $\alpha \in ]0, 1]$;\\
$(4)$ $\overline{\cup_{i=1}^{\infty} [A]_{\alpha_i}}=[A]_0$
for any sequence $\alpha_i$ that converges from above to $0$.
\end{theorem}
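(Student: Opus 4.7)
The plan is to verify each of the four properties directly from the definition of a fuzzy number, namely normality (there exists $x_0$ with $A(x_0)=1$), fuzzy convexity (the sets $\{x : A(x)\geq\alpha\}$ are convex), upper semicontinuity of $A$, and compact support. Each item reduces to one or two of these defining properties together with an elementary set-theoretic manipulation, so the argument is short but needs to be done carefully case by case.

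For item $(1)$, I would observe that $[A]_\alpha$ is nonempty because the normalizer $x_0$ satisfies $A(x_0)=1\geq\alpha$ for every $\alpha\in[0,1]$. Closedness of $[A]_\alpha$ for $\alpha>0$ follows from upper semicontinuity (since preimages of $[\alpha,\infty)$ under an upper semicontinuous function are closed), while the case $\alpha=0$ holds by definition, as $[A]_0$ is the closure of the support. The interval property is exactly fuzzy convexity combined with closedness: a closed convex subset of $\mathbb{R}$ is a closed interval. Boundedness (hence compactness) for $\alpha>0$ follows from the inclusion $[A]_\alpha\subseteq[A]_0$ together with the compactness of the support. Item $(2)$ is immediate: $A(x)\geq\alpha$ and $\alpha\geq\beta$ yield $A(x)\geq\beta$.

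For item $(3)$, with $\alpha_i\nearrow\alpha\in(0,1]$, the inclusion $[A]_\alpha\subseteq\bigcap_i[A]_{\alpha_i}$ comes straight from item $(2)$. For the reverse inclusion, if $x\in\bigcap_i[A]_{\alpha_i}$ then $A(x)\geq\alpha_i$ for every $i$, and passing to the limit yields $A(x)\geq\alpha$, so $x\in[A]_\alpha$. For item $(4)$, with $\alpha_i\searrow 0$, the inclusion $\overline{\bigcup_i[A]_{\alpha_i}}\subseteq[A]_0$ follows from $[A]_{\alpha_i}\subseteq[A]_0$ and the fact that $[A]_0$ is closed. For the opposite inclusion, any $x$ in the support satisfies $A(x)>0$, so by $\alpha_i\searrow 0$ there is some index $i$ with $\alpha_i<A(x)$, giving $x\in[A]_{\alpha_i}\subseteq\bigcup_i[A]_{\alpha_i}$; taking closures yields $[A]_0=\overline{\operatorname{supp}(A)}\subseteq\overline{\bigcup_i[A]_{\alpha_i}}$.

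The only point where one must be a little careful is in $(3)$, where the restriction $\alpha>0$ is essential: the limit argument $A(x)\geq\lim\alpha_i$ is valid for any limit, but one cannot replace $[A]_\alpha$ by $[A]_0$ in that formula since $A(x)\geq 0$ is trivial and the correct statement for $\alpha=0$ involves a closure, which is precisely why item $(4)$ is stated separately. I do not anticipate any genuine obstacle; the main task is just to invoke the right defining property for each clause and to handle the distinction between $\alpha>0$ and $\alpha=0$ consistently.
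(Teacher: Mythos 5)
Your verification is correct, and it is the standard direct argument: each clause follows from one of the defining properties of a fuzzy number (normality for nonemptiness, upper semicontinuity for closedness, fuzzy convexity for the interval structure, compact support for boundedness), plus elementary limit and closure manipulations for clauses $(3)$ and $(4)$. Note that the paper itself offers no proof of this statement; it is quoted as a known result with a citation to Negoi\c t\u a and Ralescu, so there is no in-paper argument to compare against. The only detail you leave implicit is the interval property of $[A]_0$ itself: the support $\{x : A(x)>0\}$ is a nested union of the convex sets $[A]_\alpha$, $\alpha>0$, hence convex, and the closure of a convex subset of $\mathbb{R}$ is again convex; with that one-line addition the proof is complete.
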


\begin{theorem}[Characterization theorem, Negoi\c t\u a and Ralescu \cite{Negoita}]
\label{th2}
Let $\{ U_{\alpha}|~ 0\leq \alpha \leq 1 \}$ be a
family of subsets $\mathbb{R}$ satisfying the following conditions:\\
$(1)$ $U_{\alpha}$ are nonempty closed intervals for all $\alpha\in [0, 1]$;\\
$(2)$ $U_{\alpha} \subseteq U_{\beta}$ for all $0 \leq \beta \leq \alpha \leq 1$;\\
$(3)$ $\cap_{i=1}^{\infty} U_{\alpha_i}=U_{\alpha}$ for any sequence
$\alpha_i$ that converges from below to $\alpha \in ]0, 1]$;\\
$(4)$ $\overline{\cup_{i=1}^{\infty} U_{\alpha_i}}=U_0$ for any sequence
$\alpha_i$ that converges from above to $0$.\\
Then, there exists a unique fuzzy number $A$
such that $[A]_{\alpha} = U_{\alpha}$ for any $\alpha\in[0,1]$.
\end{theorem}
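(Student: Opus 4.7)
The plan is to construct $A$ explicitly from the family $\{U_\alpha\}$ by the canonical recipe
\[
A(x) = \sup\{\alpha \in [0,1] : x \in U_\alpha\},
\]
with the convention $\sup\emptyset = 0$. Having $A$ in hand, the task splits into three pieces: verifying the identity $[A]_\alpha = U_\alpha$ for every $\alpha \in [0,1]$; checking that $A$ is a genuine fuzzy number in the sense of Section~\ref{sec:02}; and establishing uniqueness.

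For the identity $[A]_\alpha = U_\alpha$, the easy inclusion $U_\alpha \subseteq [A]_\alpha$ is immediate from the definition of $A$. For the reverse inclusion at $\alpha > 0$, I would take $x$ with $A(x) \geq \alpha$ and split into two cases. If $A(x) > \alpha$, condition~(2) together with the supremum definition produces some $\gamma \in (\alpha, A(x)]$ with $x \in U_\gamma \subseteq U_\alpha$. If $A(x) = \alpha$, I would extract a sequence $\alpha_i$ with $x \in U_{\alpha_i}$ and $\alpha_i \nearrow \alpha$ (passing to a subsequence to ensure strict ascent if necessary), whence condition~(3) forces $x \in \bigcap_i U_{\alpha_i} = U_\alpha$. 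For the distinguished level $\alpha = 0$, I would observe that the positive support $\{x : A(x) > 0\}$ coincides with $\bigcup_{\alpha > 0} U_\alpha$, and apply condition~(4) after taking closures.

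The four hypotheses then readily translate into the axioms defining a fuzzy number: normality follows from $U_1 \neq \emptyset$, since any $x \in U_1$ satisfies $A(x) = 1$; fuzzy convexity and upper semicontinuity of $A$ both follow from the level-set identity combined with condition~(1), which guarantees each $[A]_\alpha$ is a closed interval; compactness of the support follows from $U_0$ being a bounded closed interval. Uniqueness is then a standard consequence of the reconstruction formula: any fuzzy number $B$ with $[B]_\alpha = U_\alpha$ for all $\alpha$ satisfies $B(x) = \sup\{\alpha : x \in [B]_\alpha\}$, which equals $A(x)$ by construction.

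The main obstacle lies in the careful case analysis needed to show $[A]_\alpha \subseteq U_\alpha$ when $A(x) = \alpha$: one must rule out the scenario in which the supremum is realised only through values $\beta_i$ that fail to accumulate at $\alpha$ from below, while still producing a sequence to which condition~(3) can be applied. This is precisely where hypotheses~(2) and~(3) work in tandem, and clarifies why condition~(3) is phrased using sequences converging \emph{from below}.
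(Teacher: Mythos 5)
The paper states this characterization theorem as a classical result of Negoi\c t\u a and Ralescu, cited from the literature without any proof of its own, so there is no in-paper argument to compare yours against. Your proposal is the standard (and correct) proof: the reconstruction $A(x)=\sup\{\alpha : x\in U_\alpha\}$, the two-case verification of $[A]_\alpha=U_\alpha$ (condition~(2) handling $A(x)>\alpha$, condition~(3) handling $A(x)=\alpha$ via an increasing witnessing sequence), the identification of $[A]_0$ with $\overline{\bigcup_{\alpha>0}U_\alpha}$ via condition~(4), the translation of conditions~(1)--(2) into normality, fuzzy convexity, upper semicontinuity and compact support, and uniqueness from the formula $B(x)=\sup\{\alpha : x\in[B]_\alpha\}$.
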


The notation $[A]_{\alpha}=[a^-_{\alpha}, a^+_{\alpha}]$ denotes explicitly
the $\alpha$-level set of a fuzzy number $A$,
where $a^-_{\alpha}$ and $a^+_{\alpha}$ are its lower and upper bounds, respectively.
The following well-known result represents some interesting properties associated to
$a^-_{\alpha}$ and $a^+_{\alpha}$ of a fuzzy number $A$.

\begin{lemma}[See \cite{Bedeebook}]
\label{lem1}
Assume that $a^-: [0,1]\rightarrow \mathbb{R}$ and
$a^+: [0,1] \rightarrow \mathbb{R}$ satisfy the following conditions:\\
$(1)$ $a^-(\alpha)=a^-_{\alpha} \in \mathbb{R}$ is a bounded, non-decreasing,
left-continuous function in $]0, 1]$ and it is right-continuous at $0$;\\
$(2)$ $a^+(\alpha)=a^+_{\alpha} \in \mathbb{R}$ is a bounded, non-increasing,
left-continuous function in $]0, 1]$ and it is right-continuous at $0$;\\
$(3)$ $a^-_1 \leq a^+_1$ for $\alpha=1$, which implies $a^-_{\alpha} \leq a^+_{\alpha}$
for all $\alpha \in [0, 1]$.\\
Then, there is a fuzzy number $A\in\mathcal{F}(\mathbb{R})$ that has
$a^-_{\alpha}, a^+_{\alpha}$ as endpoints of its $\alpha$-level set
$[A]_{\alpha}$. Moreover, if $A:\mathbb{R} \rightarrow [0,1]$ is a fuzzy number
with $[A]_{\alpha}=[a^-_{\alpha}, a^+_{\alpha}]$, then functions $a^-_{\alpha}$
and $a^+_{\alpha}$ satisfy the above conditions $(1)$--$(3)$.
\end{lemma}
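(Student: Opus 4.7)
The plan is to prove the lemma in two directions by appealing to the Stacking and Characterization theorems (Theorems~\ref{th22} and \ref{th2}) already at our disposal, thereby reducing everything to verifying their four nested-interval conditions for the family $U_\alpha := [a^-_\alpha, a^+_\alpha]$.

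For the sufficiency direction, I would start by defining $U_\alpha := [a^-_\alpha, a^+_\alpha]$ for each $\alpha \in [0,1]$. Condition~(3) of the hypotheses gives $a^-_\alpha \leq a^+_\alpha$ for every $\alpha$ (the chain of reasoning here is: at $\alpha=1$ we have $a^-_1 \leq a^+_1$ by assumption, and then monotonicity of $a^-$ and $a^+$ propagates the inequality downward), so each $U_\alpha$ is a nonempty closed bounded interval, verifying condition~(1) of Theorem~\ref{th2}. The nesting condition~(2) of Theorem~\ref{th2} follows immediately from $a^-$ being non-decreasing and $a^+$ being non-increasing: for $0\leq\beta\leq\alpha\leq 1$, $a^-_\beta\leq a^-_\alpha\leq a^+_\alpha\leq a^+_\beta$, hence $U_\alpha \subseteq U_\beta$.

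The substantive step is the two continuity-type conditions. For condition~(3) of Theorem~\ref{th2}, let $\alpha_i$ increase to $\alpha \in ]0,1]$; by the monotonicity already established, $\bigcap_{i=1}^{\infty} U_{\alpha_i} = [\sup_i a^-_{\alpha_i},\inf_i a^+_{\alpha_i}]$, and the left-continuity of $a^-$ and $a^+$ at $\alpha$ identifies these two extrema with $a^-_\alpha$ and $a^+_\alpha$ respectively, giving $\bigcap_i U_{\alpha_i} = U_\alpha$. For condition~(4), if $\alpha_i \searrow 0$, monotonicity gives $\bigcup_i U_{\alpha_i} = ]\inf_i a^-_{\alpha_i}, \sup_i a^+_{\alpha_i}[ \cup \{\text{endpoints attained}\}$, whose closure is $[a^-_0, a^+_0]=U_0$ by right-continuity at $0$. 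Theorem~\ref{th2} then produces the unique fuzzy number $A$ with $[A]_\alpha = U_\alpha$.

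For the converse (``moreover'') direction, I would invert the argument via the Stacking Theorem~\ref{th22}. Since $[A]_\alpha$ is a nonempty compact interval, $a^-_\alpha$ and $a^+_\alpha$ are well-defined real numbers; the nesting $[A]_\alpha \subseteq [A]_\beta$ for $\beta\leq\alpha$ yields both monotonicities. Boundedness comes from the compactness of the support $[A]_0$, which contains every $[A]_\alpha$. Left-continuity on $]0,1]$ is extracted from condition~(3) of Theorem~\ref{th22} by reading off the endpoints of the intersection of a decreasing nested sequence of intervals, and right-continuity at $0$ similarly follows from condition~(4) after noting that taking closure of an increasing union of intervals picks out the infimum of the left endpoints and the supremum of the right endpoints.

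The part I expect to require most care is the handling of the closure in condition~(4) of Theorem~\ref{th2} for the sufficiency direction: one must argue that, although $\bigcup_i [a^-_{\alpha_i},a^+_{\alpha_i}]$ need not itself be closed, its closure is precisely $[a^-_0,a^+_0]$, and this is where right-continuity at $0$ of both endpoint functions is essential rather than merely convenient. Everything else reduces to a clean bookkeeping of monotonicity together with the one-sided continuity hypotheses.
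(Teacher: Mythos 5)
The paper does not prove Lemma~\ref{lem1} at all: it is quoted verbatim from \cite{Bedeebook} as a known result, so there is no in-paper argument to compare yours against. Your proposal is correct and is essentially the standard proof of this characterization: the sufficiency direction reduces cleanly to verifying the four hypotheses of Theorem~\ref{th2} for $U_\alpha=[a^-_\alpha,a^+_\alpha]$ (with the monotonicity giving nesting, left-continuity giving the intersection condition, and right-continuity at $0$ giving the closure condition), and the converse reads the same properties back off Theorem~\ref{th22}. The only points worth tightening in a written version are (i) that conditions $(3)$ and $(4)$ of Theorem~\ref{th2} are stated for arbitrary sequences converging from below (resp.\ from above), not just monotone ones, which is handled by passing to a monotone subsequence together with the nesting $U_\alpha\subseteq U_{\alpha_i}$; and (ii) a cleaner statement of the union in condition $(4)$, namely that $\bigcup_i U_{\alpha_i}$ is an interval with infimum $\inf_i a^-_{\alpha_i}$ and supremum $\sup_i a^+_{\alpha_i}$, whose closure is $[a^-_0,a^+_0]$ precisely by right-continuity at $0$ --- exactly the point you flag as requiring care.
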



\subsection{Parametric representation of fuzzy numbers}

The $\alpha$-level set $[A]_{\alpha}=[a^-_{\alpha}, a^+_{\alpha}]$
allows us to consider non-decreasing and non-increasing parametric
representations as follows \cite{Chalco20}:
\begin{align}
\label{eq1}
[A]_{\alpha}&=\{ a(t,\alpha)|~a(t, \alpha)=a^-_{\alpha} + t(a^+_{\alpha} - a^-_{\alpha});
~~t \in [0, 1] \} ~\text{(non-decreasing representation)},\\\label{eq2}
[A]_{\alpha}&=\{ a(t,\alpha)|~a(t, \alpha)=a^+_{\alpha} + t(a^-_{\alpha} - a^+_{\alpha});
~~t \in [0, 1] \} ~\text{(non-increasing representation)}.
\end{align}
Using these parametric representations, one defines the equality relation of two fuzzy numbers.

\begin{definition}
\label{def1}
We write $A=B\in\mathcal{F}(\mathbb{R})$ if and only if $[A]_{\alpha}=[B]_{\alpha}$
for all $\alpha\in[0,1]$. In other words,  $A=B$ if and only if
$\{ a(t,\alpha)|~ t \in [0, 1] \}=\{ b(t,\alpha)|~ t \in [0, 1] \}$,
where $a(t,\alpha)$ and $b(t,\alpha)$ are non-decreasing (non-increasing)
parametric representations of $[A]_{\alpha}$ and $[B]_{\alpha}$, respectively.
\end{definition}

\begin{remark}
\label{rem1}
Following Definition~\ref{def1}, $A=B$ if and only if
$a(t,\alpha)=b(t,\alpha)$ for all $t\in[0,1]$.
\end{remark}

Based on the parametric representations \eqref{eq1} and \eqref{eq2}, the standard
interval arithmetic is explicitly extended to the fuzzy case.

\begin{definition}
\label{def2}
Let $[A]_{\alpha}=\{ a(t,\alpha)|~t \in [0, 1] \}$ and $[B]_{\alpha}=\{ b(t,\alpha)|~t \in [0, 1] \}$
be the non-decreasing (non-increasing) representations of $\alpha$-level sets $A, B\in\mathcal{F}(\mathbb{R})$,
respectively, and $\lambda$ be a real number. The parametric arithmetic $A*B$ and $\lambda \cdot A$
are defined in terms of their $\alpha$-level sets:
\begin{align}
\label{eqqq1}
[A * B]_{\alpha} &= \{ a(t_1, \alpha)*b(t_2, \alpha)| ~t_1, t_2 \in [0, 1]\},\\\label{eqqq2}
[\lambda \cdot A]_{\alpha} &= \{ \lambda  a(t, \alpha)| ~t \in [0, 1]\},
\end{align}
where $[A]_{\alpha}=[a^-_{\alpha},a^+_{\alpha}]$, $[B]_{\alpha}=[b^-_{\alpha},b^+_{\alpha}]$,
and $0\notin[B]_0$ in the division $A \div B$.
\end{definition}

\begin{remark}
\label{rem11}
Note that if $[A]_{\alpha}$ has a non-decreasing (non-increasing) parametric representation
and $\lambda \geq 0$, then $[\lambda \cdot A]_{\alpha}$ has a non-decreasing (non-increasing)
parametric representation, and if $\lambda < 0$, then $[\lambda \cdot A]_{\alpha}$
has a non-increasing (non-decreasing) parametric representation.
\end{remark}

Because, for each $\alpha\in[0,1]$, $a(t_1,\alpha)*b(t_2,\alpha)$
and $a(t,\alpha)$ are continuous functions in $t_1, t_2$ and $t$,
respectively, one can easily check that the parametric
arithmetic defined in Definition \ref{def2} is equivalent to the
fuzzy standard interval arithmetic of $A*B$ and $\lambda \cdot A$ \cite{Klir2},
which is defined via their $\alpha$-level sets by
\begin{equation*}
[A*B]_{\alpha}=[A]_{\alpha}*[B]_{\alpha},
~[\lambda\cdot A]_{\alpha}=\lambda[A]_{\alpha}
~~\text{for all} ~\alpha\in[0,1],
\end{equation*}
where
\begin{equation*}
[A]_{\alpha}*[B]_{\alpha}=\{c|~ c=a*b, a\in[A]_{\alpha}, b\in[B]_{\alpha}\},
~\lambda[A]_{\alpha}=\{\lambda a|~ a\in[A]_{\alpha}\}.
\end{equation*}
Parametric arithmetic \eqref{eqqq1},\eqref{eqqq2} and CIA are identical, except
that when the fuzzy numbers are the same, parametric arithmetic \eqref{eqqq1}
uses two parameters, while CIA uses one parameter, i.e.,
\begin{align*}
[A\ast A]_{\alpha}&=\left\{ a(t,\alpha)\ast a(t,\alpha)|~t\in [0,1] \right\}
\quad \text{in CIA},\\
[A\ast A]_{\alpha}&=\left\{ a(t_1,\alpha)\ast a(t_2,\alpha)|~t_1, t_2\in [0,1] \right\}
\quad \text{in parametric arithmetic \eqref{eqqq1}}.
\end{align*}
It is noteworthy that, by considering a single parameter for all fuzzy operands in the
parametric arithmetic, that is, $t_1=t_2=t$, the SLCIA is obtained.

In agreement with Definition~\ref{def2}, the difference has the property $A-A\neq 0$.
To overcome this issue, Chalco-Cano et al. \cite{Chalco20} proposed C-subtraction (using SLCIA),
which is equivalent to the generalized Hukuhara difference, as follows.

\begin{definition}[Chalco-Cano et al. \cite{Chalco20}]
\label{def3}
The parametric difference ($p$-difference for short) of two fuzzy numbers
$A, B \in \mathcal{F}(\mathbb{R})$ is given by its $\alpha$-level set as
\begin{equation*}
[A \ominus_p B]_{\alpha} = \{ a(t, \alpha)-b(t, \alpha)|
~a(t, \alpha)=a^-_{\alpha} + t(a^+_{\alpha} - a^-_{\alpha}), ~b(t, \alpha)
=b^-_{\alpha} + t(b^+_{\alpha} - b^-_{\alpha});~~t \in [0, 1]\},
\end{equation*}
where $[A]_{\alpha}=[a^-_{\alpha},a^+_{\alpha}]$
and $[B]_{\alpha}=[b^-_{\alpha},b^+_{\alpha}]$.
\end{definition}

If in Definition~\ref{def3}, the non-decreasing (non-increasing) parametric representation
for the fuzzy number $A$ and the non-increasing (non-decreasing) parametric representation
for the fuzzy number $B$ are considered, then $A\ominus_p B = A-B$.

\begin{remark}
\label{rem2}
If $a(t, \alpha)-b(t, \alpha)$ is a non-decreasing (non-increasing)
function in $t$ for all $\alpha \in [0,1]$, then the
parametric representation of $[A \ominus_p B]_{\alpha}$
is according with the non-decreasing (non-increasing) representation.
\end{remark}

\begin{remark}
\label{rem22}
Clearly, if $u\in\mathcal{F}(\mathbb{R})$ and $v\in\mathbb{R}$,
then $u\ominus_pv=u-v$ and $v\ominus_pu=v-u$.
\end{remark}

Let $h=h(t, \alpha, x)$ be a real valued function. Throughout the
paper, the following notations are used: $h'=\frac{\partial
h}{\partial x}$, $\Delta_th=\frac{\partial h}{\partial t}$ and
$\Delta_{\alpha}h=\frac{\partial h}{\partial \alpha}$. Moreover, it
is assumed that the following equalities hold for the mixed
derivatives:
\begin{align*}
\left(\Delta_{\alpha}h\right)'
&=\frac{\partial}{\partial x}\left(\frac{\partial h}{\partial \alpha}\right)
=\frac{\partial}{\partial \alpha}\left(\frac{\partial h}{\partial x}\right)=\Delta_{\alpha}(h'),\\
\left(\Delta_th\right)'
&=\frac{\partial}{\partial x}\left(\frac{\partial h}{\partial t}\right)
=\frac{\partial}{\partial t}\left(\frac{\partial h}{\partial x}\right)=\Delta_t(h').
\end{align*}

Using the parametric representation,
the obtained results by Bede and Stefanini \cite{Bede2013}
and Chalco-Cano et al. \cite{Chalco200} are easily achieved.
This follows from Proposition~\ref{prop01}.

\begin{proposition}
\label{prop01}
Let $A, B\in \mathcal{F}(\mathbb{R})$ be defined in terms of their $\alpha$-level sets
$\{a(t, \alpha)| ~a(t, \alpha)=a^-_{\alpha} + t(a^+_{\alpha} - a^-_{\alpha}); ~t \in[0,1]\}$,
$\{b(t, \alpha)| ~b(t, \alpha)=b^-_{\alpha} + t(b^+_{\alpha} - b^-_{\alpha}); ~t \in [0, 1]\}$, respectively.
Then, there exists a fuzzy number $C\in \mathbb{R}_\mathcal{F}$ such that $C=A\ominus_pB$
if and only if one of the following conditions is satisfied:
\begin{equation}\label{equ5}
\left
\{
\begin{array}{ll}
\Delta_{\alpha}c(0,\alpha)\geq0,\\
\Delta_{\alpha}c(1,\alpha)\leq0, ~~\forall \alpha\in[0,1]\\
\Delta_tc(t,1)\geq0,
\end{array}
\right.
\end{equation}
or
\begin{equation}
\label{equ6}
\left
\{
\begin{array}{ll}
\Delta_{\alpha}c(0,\alpha)\leq0,\\
\Delta_{\alpha}c(1,\alpha)\geq0, ~~\forall \alpha\in[0,1],\\
\Delta_tc(t,1)\leq0,
\end{array}
\right.
\end{equation}
where $c(t,\alpha)=a(t,\alpha)-b(t,\alpha)$.
\end{proposition}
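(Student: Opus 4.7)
The plan is to translate the existence of $C=A\ominus_p B$ as a fuzzy number into conditions on the auxiliary function $c(t,\alpha)=a(t,\alpha)-b(t,\alpha)$ by invoking Lemma~\ref{lem1}. The first step is to describe $[A\ominus_p B]_\alpha$ explicitly. Since $a(t,\alpha)$ and $b(t,\alpha)$ are both affine in $t$, so is $c$, and its slope $\Delta_t c(t,\alpha)=(a^+_\alpha-a^-_\alpha)-(b^+_\alpha-b^-_\alpha)$ is independent of $t$. Hence $[A\ominus_p B]_\alpha=\{c(t,\alpha):t\in[0,1]\}$ is always a closed interval with the two candidate endpoints $c(0,\alpha)=a^-_\alpha-b^-_\alpha$ and $c(1,\alpha)=a^+_\alpha-b^+_\alpha$, the role of lower/upper endpoint being dictated by the sign of $\Delta_t c$.

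Next, I would apply Lemma~\ref{lem1} to these two candidate endpoint functions. Boundedness and one-sided continuity in $\alpha$ are inherited from the corresponding properties of $a^{\pm}_\alpha$ and $b^{\pm}_\alpha$, so the only nontrivial requirements are that the lower endpoint be non-decreasing in $\alpha$, the upper endpoint be non-increasing in $\alpha$, and that lower $\leq$ upper at $\alpha=1$. Splitting into two cases according to which of $c(0,\alpha),c(1,\alpha)$ sits below the other, these requirements translate exactly to \eqref{equ5} (when $c(0,\alpha)$ is the lower endpoint, using $t$-independence of $\Delta_t c$ to rewrite $c(0,1)\leq c(1,1)$ as $\Delta_t c(t,1)\geq 0$) and to \eqref{equ6} in the symmetric case.

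To close the ``if'' direction, assuming \eqref{equ5} I would chain
\[
c(0,\alpha)\leq c(0,1)\leq c(1,1)\leq c(1,\alpha), \qquad \alpha\in[0,1],
\]
using the first and second inequalities of \eqref{equ5} for the outer comparisons and the third for the middle one. This shows $\Delta_t c(t,\alpha)\geq 0$ uniformly in $\alpha$, so $c(0,\alpha)$ is the lower endpoint for every $\alpha$; the monotonicity hypotheses in \eqref{equ5} then match conditions (1)--(3) of Lemma~\ref{lem1}, producing the desired fuzzy number $C$. The case of \eqref{equ6} is symmetric. For the converse, assuming $C$ exists, the ordering $c^-_1\leq c^+_1$ at $\alpha=1$ fixes one of the two cases and the $\alpha$-monotonicity of the endpoints of $C$ (again from Lemma~\ref{lem1}) prevents the sign of $\Delta_t c$ from switching with $\alpha$, yielding \eqref{equ5} or \eqref{equ6}.

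The main obstacle is precisely this last step, i.e., ruling out a ``mixed'' regime in which $\Delta_t c$ changes sign with $\alpha$. The hypothesis $\Delta_t c(t,1)\geq 0$ is only stated at the single level $\alpha=1$, so it is only in combination with the $\alpha$-monotonicity conditions on $c(0,\alpha)$ and $c(1,\alpha)$, through the four-term chain displayed above, that one can propagate the correct sign to all $\alpha$ and match the hypotheses of Lemma~\ref{lem1}. The rest of the argument is bookkeeping on inequalities.
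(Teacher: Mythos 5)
Your proof is correct and follows exactly the route the paper intends: the paper's entire proof of this proposition is the single sentence ``It can be easily proved from Lemma~\ref{lem1}'', and your argument --- identifying $c(0,\alpha)$ and $c(1,\alpha)$ as the candidate endpoints of $[A\ominus_p B]_\alpha$, checking the monotonicity and ordering hypotheses of Lemma~\ref{lem1}, and using the four-term chain to propagate the sign of $\Delta_t c$ from $\alpha=1$ to all levels --- is precisely the bookkeeping the authors omit. Nothing essential to change.
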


\begin{proof}
It can be easily proved from Lemma~\ref{lem1}.
\end{proof}

In Proposition~\ref{prop01}, if both conditions \eqref{equ5}
and \eqref{equ6} hold, then $A\ominus_pB$ is a crisp number.

The $p$-difference of two fuzzy numbers does not always exist.
For example, consider the two triangular fuzzy numbers $A=(12,15,19)$
and $B=(5,9,11)$. The $p$-difference $A\ominus_pB$
does not exists because
$$
[A\ominus_pB]_{\alpha}=\{ 7-\alpha+t(1-\alpha)|~ t \in[0,1] \}
$$
does not satisfy the conditions \eqref{equ5} and \eqref{equ6} of Proposition~\ref{prop01}.
To overcome this shortcoming, a new difference between fuzzy numbers has been defined,
that always exists (cf. Theorem~4 and Remark~7 in \cite{Chalco20}).

\begin{definition}
\label{def33}
The generalized parametric difference ($gp$-difference for short) of two fuzzy numbers
$A, B \in \mathcal{F}(\mathbb{R})$ is given by its $\alpha$-level set as
\begin{align*}
[A \ominus_{gp} B]_{\alpha} &= \left[\inf_{\beta\geq\alpha}
\min_{t} (a(t, \beta)-b(t, \beta)), \sup_{\beta\geq\alpha}
\max_{t}(a(t, \beta)-b(t, \beta))\right],
\end{align*}
where $a(t, \beta)=a^-_{\beta} + t(a^+_{\beta} - a^-_{\beta})$
and $b(t, \beta)=b^-_{\beta} + t(b^+_{\beta} - b^-_{\beta})$.
\end{definition}

\begin{proposition}[See \cite{Chalco20}]
For any fuzzy numbers $A, B\in \mathcal{F}(\mathbb{R})$,
the $gp$-difference $A\ominus_{gp}B$ exists and is a fuzzy number.
Moreover, the $gp$-difference and the generalized difference coincide.
\end{proposition}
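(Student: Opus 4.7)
The plan is to verify the four conditions of Lemma~\ref{lem1} for the endpoint functions defining $[A\ominus_{gp}B]_\alpha$, and then match the formula against Stefanini's generalized difference by direct comparison.

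First I would reduce the definition to something concrete. Since $a(t,\beta)-b(t,\beta)=(a^-_\beta-b^-_\beta)+t\bigl[(a^+_\beta-a^-_\beta)-(b^+_\beta-b^-_\beta)\bigr]$ is linear in $t$, its minimum and maximum on $[0,1]$ are attained at the endpoints, so
\begin{equation*}
\min_{t\in[0,1]}\bigl(a(t,\beta)-b(t,\beta)\bigr)=\min\bigl(a^-_\beta-b^-_\beta,\,a^+_\beta-b^+_\beta\bigr),
\end{equation*}
and analogously for the max. Writing $h(\beta)$ and $H(\beta)$ for these two quantities, the endpoints become $c^-(\alpha)=\inf_{\beta\geq\alpha}h(\beta)$ and $c^+(\alpha)=\sup_{\beta\geq\alpha}H(\beta)$.

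Next I would check the hypotheses of Lemma~\ref{lem1}. Boundedness is immediate from the compactness of $[A]_0$ and $[B]_0$. Monotonicity is structural: shrinking the set $[\alpha,1]$ can only increase an inf and decrease a sup, so $c^-$ is non-decreasing and $c^+$ non-increasing. The inequality $c^-(1)\le c^+(1)$ holds because at $\alpha=1$ the inf/sup collapses to the $\min$/$\max$ of the same two numbers. The ordering $c^-(\alpha)\le c^+(\alpha)$ for all $\alpha$ then follows from the monotonicities. Left-continuity on $]0,1]$ is the delicate part: $h$ and $H$ inherit left-continuity from Lemma~\ref{lem1}(1)--(2) applied to $a^\pm,b^\pm$, and then I would argue by contradiction. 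If $\alpha_n\uparrow\alpha$ and $L:=\sup_n c^-(\alpha_n)<c^-(\alpha)$, pick $\beta_n\ge\alpha_n$ with $h(\beta_n)\le L+1/n$. Either infinitely many $\beta_n$ lie in $[\alpha,1]$, contradicting $c^-(\alpha)>L$ directly, or eventually $\beta_n\in[\alpha_n,\alpha)$ with $\beta_n\to\alpha^-$, in which case left-continuity of $h$ at $\alpha$ forces $h(\alpha)\le L<c^-(\alpha)\le h(\alpha)$, a contradiction. The right-continuity at $0$ follows from a simpler version of the same argument using any $\beta^*\ge 0$ with $h(\beta^*)$ close to $c^-(0)$. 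The same reasoning applied to $-H$ handles $c^+$. Invoking Lemma~\ref{lem1}, the family $[c^-(\alpha),c^+(\alpha)]$ defines a unique fuzzy number, which is $A\ominus_{gp}B$.

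For the second assertion, I would expand Stefanini's generalized difference $A\ominus_g B$ on $\alpha$-levels, namely the smallest closed interval containing $[a^-_\beta-b^-_\beta,a^+_\beta-b^+_\beta]$ and $[a^+_\beta-b^+_\beta,a^-_\beta-b^-_\beta]$ (whichever is nondegenerate) over $\beta\ge\alpha$. Using the reduction $\min_t(\cdots)=\min(a^-_\beta-b^-_\beta,a^+_\beta-b^+_\beta)$ and $\max_t(\cdots)=\max(\cdots)$, this coincides term-by-term with Definition~\ref{def33}; thus $A\ominus_{gp}B=A\ominus_g B$. The main obstacle I expect is the left-continuity step, since it requires carefully combining the left-continuity of $a^\pm,b^\pm$ with the monotone behavior of the parametric inf/sup; the rest is essentially bookkeeping.
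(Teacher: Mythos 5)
Your argument is correct. Note, however, that the paper does not prove this proposition at all: it is quoted verbatim from Chalco-Cano et al.\ \cite{Chalco20} (cf.\ the pointer to Theorem~4 and Remark~7 there), so there is no internal proof to compare against. Your route --- reduce $\min_t$ and $\max_t$ to the endpoint values $h(\beta)=\min(a^-_\beta-b^-_\beta,\,a^+_\beta-b^+_\beta)$ and $H(\beta)=\max(\cdot)$ by linearity in $t$, then verify the hypotheses of Lemma~\ref{lem1} for $c^-(\alpha)=\inf_{\beta\geq\alpha}h(\beta)$ and $c^+(\alpha)=\sup_{\beta\geq\alpha}H(\beta)$ --- is exactly the standard way this existence result is established, and your contradiction argument for left-continuity (splitting according to whether the near-minimizers $\beta_n$ eventually fall below $\alpha$, and then using left-continuity of $h$, which it inherits as a min of left-continuous functions) is sound. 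Two small points worth tightening if this were written out in full: at $\alpha=0$ the case where the near-infimizer is $\beta^*=0$ itself must be handled via the right-continuity of $h$ at $0$ (which you do have from Lemma~\ref{lem1} applied to $a^\pm,b^\pm$); and for the identification with the generalized difference you are implicitly using the convexified (Gomes--Barros) form of Bede--Stefanini's definition, under which $[A\ominus_g B]_\alpha$ is precisely $\left[\inf_{\beta\geq\alpha}h(\beta),\,\sup_{\beta\geq\alpha}H(\beta)\right]$, so the term-by-term match is legitimate.
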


\begin{example}
Consider the triangular fuzzy numbers $A=(12,15,19)$, $B=(5,9,11)$, $C=(0,5,10)$,
and the trapezoidal fuzzy number $D=(4,5,6,8)$. It is easy to see that the
$p$-differences $A\ominus_pB$ and $D\ominus_pC$ do not exist,
while their $gp$-differences exist:
\begin{align*}
[A\ominus_{gp}B]_{\alpha}&=\left[\inf_{\beta\geq\alpha}\min_{t} (7-\beta+t(1-\beta)),
\sup_{\beta\geq\alpha}\max_{t} (7-\beta+t(1-\beta))\right]\\
&=\left[\inf_{\beta\geq\alpha} (7-\beta), \sup_{\beta\geq\alpha}(8-2\beta)\right]=[6,8-2\alpha],\\
[D\ominus_{gp}C]_{\alpha}&=\left[\inf_{\beta\geq\alpha}\min_{t} (4-4\beta+t(-6+7\beta)),
\sup_{\beta\geq\alpha}\max_{t} (4-4\beta+t(-6+7\beta))\right]\\
&=\left\{
\begin{array}{ll}
\!\Big{[}\underset{\beta\geq\alpha}\inf (3\beta-2),
\underset{\beta\geq\alpha}\sup(4-4\beta)\Big{]},
& \beta\in[0,\frac{6}{7}]\\
\!\Big{[}\underset{\beta\geq\alpha}\inf (4-4\beta),
\underset{\beta\geq\alpha}\sup(3\beta-2)\Big{]},
& \beta\in[\frac{6}{7},1]
\end{array}
\right.\\
&=\left\{
\begin{array}{ll}
\![3\alpha-2, 4-4\alpha],& \alpha\in[0,\frac{2}{3}]\\
\![0, 4-4\alpha], &\alpha\in[\frac{2}{3},\frac{3}{4}]\\
\![0,1], &\alpha\in[\frac{3}{4},1].
\end{array}
\right.
\end{align*}
\end{example}

\begin{definition}
\label{def333}
Let $A, B\in \mathcal{F}(\mathbb{R})$ be defined in terms of their $\alpha$-level sets
$\{a(t, \alpha)| ~a(t, \alpha)=a^-_{\alpha} + t(a^+_{\alpha} - a^-_{\alpha}); ~t \in[0,1]\}$,
$\{b(t, \alpha)| ~b(t, \alpha)=b^-_{\alpha} + t(b^+_{\alpha} - b^-_{\alpha}); ~t \in [0, 1]\}$,
respectively. The metric $D: \mathcal{F}(\mathbb{R})\times \mathcal{F}(\mathbb{R})
\rightarrow \mathbb{R}_+\cup\{0\}$ is defined as
$$
D(A, B) = \underset { 0 \leq \alpha \leq 1 }{ \sup }
\left\{ d([A]_{\alpha}, [B]_{\alpha}) \right\},
$$
where
\begin{equation}
\label{equ4}
d([A]_{\alpha}, [B]_{\alpha})
= \underset{t}{\max}|a(t, \alpha)-b(t, \alpha)|.
\end{equation}
\end{definition}

Relation \eqref{equ4} is equivalent to the Hausdorff distance
between $[A]_{\alpha}$ and $[B]_{\alpha}$.
Let $k\in\mathbb{R}$ and $A, B, C, E \in\mathcal{F}(\mathbb{R})$.
The following properties are well-known for the metric $D$:
\begin{itemize}
\item $D(A+C,B+C)=D(A,B)$,
\item $D(kA, kB)=|k|D(A, B)$,
\item $D(A+B, C+E)\leq D(A, C)+D(B, E)$,
\item $(\mathcal{F}(\mathbb{R}), D)$ is a complete metric space.
\end{itemize}

\begin{proposition}
\label{prop11}
For $A,B\in\mathcal{F}(\mathbb{R})$, if $A\ominus_pB$ exists, then
\begin{equation*}
D(A,B)=D(A\ominus_pB,0).
\end{equation*}
\end{proposition}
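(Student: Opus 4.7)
The plan is to show both sides unravel to the same quantity once we use Definitions~\ref{def3} and \ref{def333} together with the existence hypothesis on $A\ominus_p B$. By definition of the metric,
\[
D(A,B)=\sup_{0\le\alpha\le 1}\max_{t\in[0,1]}|a(t,\alpha)-b(t,\alpha)|,
\]
so the strategy is to recognise the inner expression as $\max_{t}|c(t,\alpha)|$, where $c(t,\alpha):=a(t,\alpha)-b(t,\alpha)$ is (by Definition~\ref{def3}) precisely the parametric description of the $\alpha$-level set $[A\ominus_p B]_\alpha$.

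First I would fix an arbitrary $\alpha\in[0,1]$ and note that, since the hypothesis says $A\ominus_p B$ exists as a fuzzy number, Proposition~\ref{prop01} applies. This guarantees that $c(t,\alpha)$ is monotone in $t$ (non-decreasing under \eqref{equ5}, non-increasing under \eqref{equ6}), and hence $\{c(t,\alpha):t\in[0,1]\}$ is a genuine closed interval $[c^{-}_\alpha,c^{+}_\alpha]$, serving as either the non-decreasing or the non-increasing parametric representation of $[A\ominus_p B]_\alpha$. Next I would observe that, whatever parametric representation $\widetilde c(t,\alpha)$ one uses for $A\ominus_p B$ when computing $D(A\ominus_p B,0)$, the set of values swept as $t$ ranges over $[0,1]$ is the same interval $[c^{-}_\alpha,c^{+}_\alpha]$. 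Consequently,
\[
\max_{t\in[0,1]}|\widetilde c(t,\alpha)-0|
=\max_{x\in[c^{-}_\alpha,c^{+}_\alpha]}|x|
=\max_{t\in[0,1]}|c(t,\alpha)|
=\max_{t\in[0,1]}|a(t,\alpha)-b(t,\alpha)|.
\]

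Taking the supremum over $\alpha\in[0,1]$ of both ends of this chain yields $D(A\ominus_p B,0)=D(A,B)$, which is the desired identity. The only subtle point, and the one worth emphasising, is that $D(A\ominus_p B,0)$ is a priori computed from a parametric representation of the fuzzy number $A\ominus_p B$ while $D(A,B)$ is computed from parametric representations of $A$ and $B$; the existence hypothesis, through Proposition~\ref{prop01}, is exactly what ensures the set $\{a(t,\alpha)-b(t,\alpha):t\in[0,1]\}$ is itself already a legitimate parametric representation of $[A\ominus_p B]_\alpha$, so no distortion is introduced when replacing one by the other. Beyond this identification, the argument is a direct computation relying on the equivalence of $d$ in \eqref{equ4} with the Hausdorff distance and on the trivial fact that $[0]_\alpha=\{0\}$ for all $\alpha$.
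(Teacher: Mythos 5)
Your argument is correct and is precisely the natural unpacking of the paper's own (one-line) proof, which simply asserts that the identity follows from Definition~\ref{def333}: in both cases the point is that $\max_t|a(t,\alpha)-b(t,\alpha)|$ depends only on the interval $\{a(t,\alpha)-b(t,\alpha):t\in[0,1]\}=[A\ominus_pB]_\alpha$, which is exactly $d([A\ominus_pB]_\alpha,\{0\})$. The only cosmetic remark is that the monotonicity of $c(\cdot,\alpha)$ in $t$ is automatic from affinity rather than a consequence of Proposition~\ref{prop01}; what the existence hypothesis really buys is that $A\ominus_pB$ is a fuzzy number so that $D(A\ominus_pB,0)$ is defined at all.
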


\begin{proof}
It is easily proved from Definition~\ref{def333}.
\end{proof}

An immediate property, that follows from Proposition~\ref{prop11}, is
\begin{equation*}
D(A,B)=0 \Leftrightarrow A\ominus_pB=0.
\end{equation*}


\section{Main results: fuzzy valued functions}
\label{sec:03}

Let $F:{\cal S}\subseteq \mathbb{R}\rightarrow
\mathcal{F}(\mathbb{R})$ be a fuzzy valued function with
$\alpha$-level set $[F(x)]_{\alpha}=[f^-_{\alpha}(x),
f^+_{\alpha}(x)]$. Then, it is trivial to see that a parametric
representation of $[F(x)]_{\alpha}$ is
\begin{equation}
\label{equ1}
\left\{f_{(t, \alpha)}(x)=f^-_{\alpha}(x)
+t(f^+_{\alpha}(x)-f^-_{\alpha}(x))| ~t\in[0,1] \right\}.
\end{equation}

\begin{definition}[See \cite{Heidari2017}]
\label{def6} The fuzzy number $L$ is the limit of the fuzzy valued
function $F: {\cal S}\subseteq\mathbb{R}\rightarrow
\mathcal{F}(\mathbb{R})$ as $x\rightarrow a$ if, and only if, for
every $\epsilon>0$ there exists $\delta>0$ such that $D(F(x),
L)<\epsilon,$ whenever $| x-a |<\delta$.
\end{definition}

\begin{proposition}
\label{prop3} For a fuzzy valued function ${F: {\cal
S}\subseteq\mathbb{R}\rightarrow \mathcal{F}(\mathbb{R})}$,
\begin{equation*}
\underset{x\rightarrow x_0}{\lim} F(x)=L \Leftrightarrow
\underset{x\rightarrow x_0}{\lim} (F(x)\ominus_p L)=0.
\end{equation*}
\end{proposition}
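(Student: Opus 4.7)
The plan is to reduce both limits to statements about the metric $D$ and then invoke Proposition~\ref{prop11} to see that the two metric conditions coincide. More precisely, by Definition~\ref{def6}, the statement $\lim_{x\to x_0} F(x)=L$ unfolds as: for every $\epsilon>0$ there is $\delta>0$ with $D(F(x),L)<\epsilon$ whenever $|x-x_0|<\delta$. Likewise, interpreting $0$ as the crisp fuzzy number, $\lim_{x\to x_0}(F(x)\ominus_p L)=0$ unfolds as: for every $\epsilon>0$ there is $\delta>0$ with $D(F(x)\ominus_p L,0)<\epsilon$ whenever $|x-x_0|<\delta$.

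The bridge between these two formulations is Proposition~\ref{prop11}, which asserts the identity $D(F(x),L)=D(F(x)\ominus_p L,0)$ whenever the $p$-difference $F(x)\ominus_p L$ exists. Thus, on a deleted neighbourhood of $x_0$ on which $F(x)\ominus_p L$ is well defined, the two $\epsilon$-$\delta$ conditions above are literally the same inequality, and the biconditional follows immediately. Concretely, I would write: first assume $\lim_{x\to x_0}F(x)=L$, fix $\epsilon>0$, extract the corresponding $\delta>0$, and then conclude $D(F(x)\ominus_p L,0)=D(F(x),L)<\epsilon$ for $|x-x_0|<\delta$; the converse direction is identical with the roles reversed.

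The only subtle point, and the one I would expect to be the main obstacle, is the implicit existence hypothesis for the $p$-difference $F(x)\ominus_p L$: as noted in the paragraph following Proposition~\ref{prop01}, this difference need not always exist, and without it the right-hand side $\lim_{x\to x_0}(F(x)\ominus_p L)=0$ has no meaning. I would therefore add a brief comment that the equivalence is to be read under the standing assumption that $F(x)\ominus_p L$ exists for all $x$ in some neighbourhood of $x_0$ (so that the right-hand side is well posed), and that under this assumption Proposition~\ref{prop11} applies pointwise and yields the claimed equivalence with no further calculation required.
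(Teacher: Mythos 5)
Your proposal is correct and follows exactly the route the paper intends: the paper's own proof consists of the single line ``It is easily obtained from Definition~\ref{def6} and Proposition~\ref{prop11}'', which is precisely your reduction of both limit statements to the identity $D(F(x),L)=D(F(x)\ominus_p L,0)$. Your added caveat about the existence of $F(x)\ominus_p L$ near $x_0$ is a genuine point the paper leaves implicit, and stating it explicitly only strengthens the argument.
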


\begin{proof}
It is easily obtained from Definition~\ref{def6} and Proposition~\ref{prop11}.
\end{proof}

\begin{definition}
\label{def7} Let $F: {\cal S}\subseteq\mathbb{R}\rightarrow
\mathcal{F}(\mathbb{R})$ be a fuzzy valued function. We say that $F$
is continuous at $a\in {\cal S}$ if and only if for every
$\epsilon>0$, there exists a $\delta=\delta(a, \epsilon)>0$ such
that
\begin{equation*}
D(F(x), F(a))<\epsilon
\end{equation*}
for all $x\in {\cal S}$ with $| x-a |<\delta$, that is,
\begin{equation*}
\underset{x\rightarrow a}{\lim} F(x)=F(a).
\end{equation*}
Moreover, we say that $F(x)$ is continuous if it is continuous at
any point in ${\cal S}$.
\end{definition}

\begin{definition}[See \cite{Zadeh1994}]
The Zadeh's extension $F: \mathcal{F}(U) \rightarrow \mathcal{F}(V)$
of a function $f: U \rightarrow V$, where $U$ and $V$ are topological spaces,
is defined as follows: given $u \in \mathcal{F}(U)$,
\begin{equation*}
\mu_{F(u)}(y) =
\left\{
\begin{array}{ll}
\underset { s \in f^{-1}(y) }{ \sup } \mu_u(s) ~~&\text{if} ~~~f^{-1}(y)\neq \emptyset\\
\quad 0 &\text{if} ~~~f^{-1}(y) = \emptyset
\end{array}
\right.
\end{equation*}
for all $y \in V$.
\end{definition}

In general, the computation of Zadeh's extension principle is a rather difficult task.
Simplicity is found, however, if the function to be extended is continuous \cite{ChalcoY}.

\begin{theorem}[See \cite{Huang}]
\label{th1}
If $f: \mathbb{R}^n \rightarrow \mathbb{R}^m$ is continuous,
then Zadeh's extension $F:\mathcal{F}(\mathbb{R}^n) \rightarrow \mathcal{F}(\mathbb{R}^m)$
is well defined, continuous, and $[F(u)]_{\alpha} = f([u]_{\alpha})$ for all $\alpha \in [0, 1]$.
\end{theorem}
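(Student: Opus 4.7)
The plan is to reduce the theorem to its core content, namely the level-set identity $[F(u)]_\alpha=f([u]_\alpha)$, from which well-definedness and continuity will follow readily via the characterization of fuzzy numbers through their level sets (Theorem~\ref{th2}).

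First I would establish the identity $[F(u)]_\alpha=f([u]_\alpha)$ for $\alpha\in(0,1]$ by double inclusion. The inclusion $f([u]_\alpha)\subseteq[F(u)]_\alpha$ is immediate: if $y=f(s)$ with $s\in[u]_\alpha$ then, by definition of Zadeh's extension, $\mu_{F(u)}(y)=\sup_{s'\in f^{-1}(y)}\mu_u(s')\geq\mu_u(s)\geq\alpha$. The reverse inclusion requires more care: assume $y\in[F(u)]_\alpha$, so the supremum defining $\mu_{F(u)}(y)$ is $\geq\alpha$; I would pick a sequence $s_k\in f^{-1}(y)$ with $\mu_u(s_k)\to\mu_{F(u)}(y)\geq\alpha$, noting that eventually $\mu_u(s_k)\geq\alpha/2>0$, so $\{s_k\}$ lies in the compact set $[u]_{\alpha/2}\subseteq[u]_0$. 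Extracting a convergent subsequence $s_{k_j}\to s^*$ and invoking continuity of $f$ gives $f(s^*)=y$, and upper semicontinuity of $\mu_u$ gives $\mu_u(s^*)\geq\limsup_j\mu_u(s_{k_j})\geq\alpha$, so $s^*\in[u]_\alpha$ and $y\in f([u]_\alpha)$. This compactness-plus-upper-semicontinuity argument is the main obstacle of the proof, since without it the supremum in the definition of $\mu_{F(u)}$ need not be attained.

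Next I would handle $\alpha=0$ using the convention $[A]_0=\overline{\{x:A(x)>0\}}$. A direct computation gives $\{y:\mu_{F(u)}(y)>0\}=f(\{s:\mu_u(s)>0\})$, and taking closures together with the continuity of $f$ and compactness of $[u]_0$ yields $[F(u)]_0=\overline{f(\mathrm{supp}(u))}=f(\overline{\mathrm{supp}(u)})=f([u]_0)$, since the continuous image of a compact set is compact and therefore closed.

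With the identity in hand, well-definedness of $F:\mathcal{F}(\mathbb{R}^n)\to\mathcal{F}(\mathbb{R}^m)$ follows by verifying the hypotheses of Theorem~\ref{th2} for $\{U_\alpha=f([u]_\alpha)\}$: nonemptiness and compactness come from continuity applied to the compact nonempty $[u]_\alpha$; monotonicity $[u]_\alpha\subseteq[u]_\beta$ for $\beta\leq\alpha$ passes through $f$; and the intersection/union conditions follow from the corresponding properties of $[u]_\alpha$ combined with standard compactness arguments (for the $\cap$-condition, $y\in\bigcap_j f([u]_{\alpha_j})$ yields preimages $s_j\in[u]_{\alpha_j}$ with $f(s_j)=y$, and a convergent subsequence produces $s^*\in\bigcap_j[u]_{\alpha_j}=[u]_\alpha$ with $f(s^*)=y$). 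Finally, for continuity of $F$ I would work with the metric $D$ of Definition~\ref{def333}. Given $u,v\in\mathcal{F}(\mathbb{R}^n)$, I would bound the Hausdorff distance between $f([u]_\alpha)$ and $f([v]_\alpha)$ uniformly in $\alpha$: both sets lie in the compact set $K=f([u]_0\cup[v]_0)$, on which $f$ is uniformly continuous, so for any $\varepsilon>0$ one can choose $\delta>0$ with $\|s-s'\|<\delta\Rightarrow\|f(s)-f(s')\|<\varepsilon$, and then $D(u,v)<\delta$ forces $D(F(u),F(v))\leq\varepsilon$. A localization of this argument to a common compact enclosure of a convergent sequence $u_k\to u$ delivers the desired continuity.
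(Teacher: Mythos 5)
This theorem is imported verbatim from \cite{Huang}: the paper states it with a citation and offers no proof of its own, so there is nothing internal to compare your argument against. That said, your proposal is the standard and essentially correct proof of this classical fact (it is the usual Nguyen/Rom\'an-Flores-type argument): the inclusion $f([u]_\alpha)\subseteq[F(u)]_\alpha$ is trivial from the definition of the extension, and the reverse inclusion genuinely needs the compactness of a slightly lower level set together with upper semicontinuity of $\mu_u$ and continuity of $f$, exactly as you set it up; your treatment of $\alpha=0$ via $\overline{f(S)}=f(\overline{S})$ for $S$ with compact closure is also correct, and the uniform-continuity argument for continuity of $F$ in the metric $D$ is sound. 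Two caveats are worth flagging. First, your appeal to Theorem~\ref{th2} for well-definedness is only literally available when $m=1$: that theorem requires the $U_\alpha$ to be closed \emph{intervals}, and for $m>1$ the set $f([u]_\alpha)$ is compact and connected but need not be convex, so either one restricts to $m=1$ (which is all the paper ever uses) or one must take $\mathcal{F}(\mathbb{R}^m)$ to mean normal, upper semicontinuous, compactly supported fuzzy sets \emph{without} the fuzzy-convexity requirement, as in \cite{Huang}; the verification of the stacking conditions themselves is fine. Second, a small slip: $f$ should be taken uniformly continuous on the domain-side compact set $[u]_0\cup[v]_0$ (or a fixed compact neighbourhood of $[u]_0$ containing all $[v]_0$ with $D(u,v)$ small), not on its image $f([u]_0\cup[v]_0)$; your subsequent use of the $\delta$--$\varepsilon$ implication shows this is what you meant.
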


Let $\textbf{C}_{\nu}^k \in (\mathcal{F}(\mathbb{R}))^k$ be a
$k$-dimensional fuzzy vector for which each element is a fuzzy
number, i.e., $\textbf{C}_{\nu}^k = (C_1, C_2, \cdots, C_k)^T,~C_j
\in \mathcal{F}(\mathbb{R})$, $j=1, \cdots, k$, which denotes the
set of all parameters that are present in a fuzzy valued function.
Without loss of generality, consider $\textbf{C}_{\nu}^k$ to be an
ordered set with respect to (w.r.t.) the order maintained in the
function. If the fuzzy valued function $F_{\textbf{C}_{\nu}^k}:
{\cal S}\subseteq\mathbb{R} \rightarrow \mathcal{F}(\mathbb{R})$ is
obtained from a continuous function by applying Zadeh's extension
principle, then, using a non-decreasing parametric representation of
the $\alpha$-level sets for the fuzzy numbers and Theorem~\ref{th1},
another parametric representation for the $\alpha$-level set of this
class of fuzzy valued functions is obtained:
\begin{equation}
\label{eq3} [F_{\textbf{C}_{\nu}^k}(x)]_{\alpha}=\left\{
f_{\textbf{c}(t, \alpha)}(x)\big| f_{\textbf{c}(t, \alpha)}:{\cal
S}\subseteq\mathbb{R}\rightarrow \mathbb{R}; \textbf{c}(\textbf{t},
\alpha)\in [\textbf{C}_{\nu}^k]_{\alpha} \right\},
\end{equation}
where the $\alpha$-level set of the fuzzy vector $\textbf{C}_{\nu}^k$ is defined as
\begin{align}
\nonumber
[\textbf{C}_{\nu}^k]_{\alpha} = \{\textbf{c}(\textbf{t}, \alpha)|~\textbf{c}(\textbf{t}, \alpha)
=(c_1(t_1, \alpha),~c_2(t_2, \alpha),&\cdots, c_k(t_k, \alpha))^T; ~c_i(t, \alpha)
=(c_i)^-_{\alpha} + t_i((c_i)^+_{\alpha} - (c_i)^-_{\alpha}),\\\label{eq22}
&i=1, \cdots, k,~\textbf{t}=(t_1, t_2, \cdots, t_k)\in[0,1]^k\}.
\end{align}
By \eqref{equ1}, since for every fixed $x$ and $\alpha \in [0,1]$,
$f_{\textbf{c}(t, \alpha)}(x)$ is linear with respect to
$\textbf{t}$ and therefore continuous, $\min_{\textbf{c}(\textbf{t},
\alpha) \in [\textbf{C}_{\nu}^k]_{\alpha}}
{f_{\textbf{c}(\textbf{t}, \alpha)}(x)} =\min_{\textbf{t}\in [0,
1]^k} f_{\textbf{c}(\textbf{t}, \alpha)}(x)$ and
$\max_{\textbf{c}(\textbf{t}, \alpha)\in
[\textbf{C}_{\nu}^k]_{\alpha}} {f_{\textbf{c}(\textbf{t},
\alpha)}(x)}= \max_{\textbf{t}\in [0, 1]^k}
f_{\textbf{c}(\textbf{t}, \alpha)}(x)$ exist, then
\begin{equation*}
[F_{\textbf{C}_{\nu}^k}(x)]_{\alpha} = \left[\min_{\textbf{t}\in [0, 1]^k}
f_{\textbf{c}(\textbf{t}, \alpha)}(x), \max_{\textbf{t}\in [0, 1]^k}
f_{\textbf{c}(\textbf{t}, \alpha)}(x)\right].
\end{equation*}
Note that the parametric variables $t_j$ in \eqref{eq3}, $j=1, 2,
\cdots, k$, belong to the interval $[0,1]$. However, the parameter
$t$ in \eqref{equ1} depends on $x$, i.e., $t:{\cal
S}\subseteq\mathbb{R}\rightarrow [0,1]$. Let us see an example.

\begin{example}
Consider the fuzzy valued function
$$
F_{\textbf{C}_{\nu}^2}(x)=(1,2,3) \cdot x+(1,2,3)\cdot x^2,
\quad 0<x<2.
$$
By \eqref{equ1} and \eqref{eq3}, two parametric representations can be considered:
\begin{align}\nonumber
[F_{\textbf{C}_{\nu}^2}(x)]_{\alpha}&=\{(1+\alpha)x+(1+\alpha)x^2
+t\left((2-2\alpha)x+(2-2\alpha)x^2\right)~|~t\in[0,1]\},\\\label{equ2}
&=\{(1+\alpha+t(2-2\alpha))(x+x^2)~|~t\in[0,1]\},\\\label{equ3}
[F_{\textbf{C}_{\nu}^2}(x)]_{\alpha}&=\{f_{\textbf{c}(t, \alpha)}(x)
=(1+\alpha+t_1(2-2\alpha))x+(1+\alpha+t_2(2-2\alpha))x^2~|~
\textbf{c}(\textbf{t}, \alpha)\in [\textbf{C}_{\nu}^2]_{\alpha}\},
\end{align}
respectively, where
$[\textbf{C}_{\nu}^2]_{\alpha}=\{\textbf{c}(\textbf{t}, \alpha)~|~\textbf{c}(\textbf{t},
\alpha)=(1+\alpha+t_1(2-2\alpha),1+\alpha+t_2(2-2\alpha));~\textbf{t}=(t_1,t_2)\in [0,1]^2\}$.
By putting $t_1=0$ and $t_2=\frac{1}{2}$ in \eqref{equ3}, function $(1+\alpha)x+2x^2$ is obtained.
This function can be also obtained by setting $t=\frac{x^2}{2(x+x^2)}$ in representation \eqref{equ2}.
Thus, in the parametric representation \eqref{eq3} the parameter $t$ is independent from variable $x$,
which is in contrast with the parametric representation \eqref{equ1}.
\end{example}

\begin{remark}
If a fuzzy valued function does not contain the same fuzzy numbers (e.g., in coefficients),
then the parametric representation \eqref{eq3} coincides with that obtained from the use of CIA.
Otherwise, equivalence is not guaranteed. To see this, consider, e.g., the fuzzy valued function
\begin{equation}
\label{rem:ex:fvf}
F_{\textbf{C}_{\nu}^3}(x)=(2,3,4) \cdot \mathrm{Ln}(x)+(0,1,3)
\cdot e^x+(2,3,4)\cdot \sin(x), \quad 0<x<2\pi.
\end{equation}
Based on the parametric representations \eqref{equ1} and \eqref{eq3}, one has
\begin{align*}
[F_{\textbf{C}_{\nu}^3}(x)]_{\alpha}&=\left\lbrace
\begin{array}{ll}
(4-\alpha)\mathrm{Ln}(x)+\alpha e^x+(2+\alpha)\sin(x)&\\
\hspace{3cm} + t \left( (-2+2\alpha)\mathrm{Ln}(x)+(3-3\alpha)e^x+(2-2\alpha)\sin(x)\right),& 0<x\leq 1, \\
(2+\alpha)\mathrm{Ln}(x)+\alpha e^x+(2+\alpha)\sin(x)&\\
\hspace{3cm} + t \left((2-2\alpha)\mathrm{Ln}(x)+(3-3\alpha)e^x+(2-2\alpha)\sin(x)\right),& 1<x\leq \pi, \\
(2+\alpha)\mathrm{Ln}(x)+\alpha e^x+(4-\alpha)\sin(x)&\\
\hspace{3cm} + t\left((2-2\alpha)\mathrm{Ln}(x)+(3-3\alpha)e^x+(-2+2\alpha)\sin(x)\right),& \pi<x< 2\pi, \\
\end{array} \right.
\end{align*}
and
\begin{align*}
\begin{array}{ll}
[F_{\textbf{C}_{\nu}^3}(x)]_{\alpha}
&=\{(2+\alpha+t_1(2-2\alpha))\mathrm{Ln}(x)+(\alpha+t_2(3-3\alpha))e^x\\
&\hspace{6.5cm}+(2+\alpha+t_3(2-2\alpha))\sin(x), \quad t_1,t_2,t_3 \in [0,1]\}, \\
\end{array}
\end{align*}
respectively, while via SLCIA and CIA we obtain
\begin{align*}
[F_{\textbf{C}_{\nu}^3}(x)]_{\alpha}
&=\{(2+\alpha)\mathrm{Ln}(x)+\alpha e^x+(2+\alpha)\sin(x)\\
&\hspace{3cm} + t \left((2-2\alpha)\mathrm{Ln}(x)+(3-3\alpha)e^x+(2-2\alpha)\sin(x)\right), ~~~~t\in [0,1]\}
\end{align*}
and
\begin{align*}
[F_{\textbf{C}_{\nu}^3}(x)]_{\alpha}
&=\left\{\left(2+\alpha+t(2-2\alpha)\right)\left(\mathrm{Ln}(x)+\sin(x)\right)
+\left(\alpha+t'(3-3\alpha)\right)e^x,~ t,t'\in [0,1]\right\},
\end{align*}
respectively. Therefore, the fuzzy valued function \eqref{rem:ex:fvf}
has four distinct parametric representations.
\end{remark}

The following result establishes a relationship between the limit and continuity
of a fuzzy valued function and the limit and continuity of its $\alpha$-level set.

\begin{proposition}
\label{prop1} Let ${F_{\textbf{C}_{\nu}^k}: {\cal
S}\subseteq\mathbb{R} \rightarrow \mathcal{F}(\mathbb{R})}$ be a
fuzzy valued function and
$$
[F_{\textbf{C}_{\nu}^k}(x)]_{\alpha} = \left\{
f_{\textbf{c}(\textbf{t}, \alpha)}(x)| f_{\textbf{c}(\textbf{t},
\alpha)}: {\cal S}\subseteq\mathbb{R} \rightarrow \mathbb{R},
\textbf{c}(\textbf{t}, \alpha) \in [\textbf{C}_{\nu}^k]_{\alpha}
\right\}.
$$
If $\underset{x\rightarrow x_0}{\lim} f_{\textbf{c}(\textbf{t}, \alpha)}(x)$
exists for every $\textbf{c}(\textbf{t}, \alpha)\in[\textbf{C}_{\nu}^k]_{\alpha}$,
in other words, for every $\textbf{t}\in[0, 1]^k$, then
$\underset{x\rightarrow x_0}{\lim}F_{\textbf{C}_{\nu}^k}(x)$ exists and
\begin{equation*}
\left[\underset{x\rightarrow x_0}{\lim} F_{\textbf{C}_{\nu}^k}(x)\right]_{\alpha}
= \left[F_{\textbf{C}_{\nu}^k}(x_0)\right]_{\alpha}
= \left[\underset{\textbf{t}}{\min}\underset{x\rightarrow x_0}{\lim}
f_{\textbf{c}(\textbf{t}, \alpha)}(x), \underset{\textbf{t}}{\max}
\underset{x\rightarrow x_0}{\lim}f_{\textbf{c}(\textbf{t}, \alpha)}(x)\right].
\end{equation*}
Moreover, $F_{\textbf{C}_{\nu}^k}$ is continuous at $x_0$ if
$f_{\textbf{c}(\textbf{t}, \alpha)}$ is continuous at $x_0$
for every $\textbf{t}\in[0, 1]^k$ and $\alpha\in[0, 1]$.
\end{proposition}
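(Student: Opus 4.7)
The plan is to combine the representation \eqref{eq3} with the metric of Definition~\ref{def333} and reduce the claim to a uniform-convergence statement on the compact box $[0,1]^k\times[0,1]$. First, I would build the candidate limit $L$ directly from its prospective $\alpha$-level sets. Setting $\ell(\textbf{t},\alpha):=\lim_{x\to x_0}f_{\textbf{c}(\textbf{t},\alpha)}(x)$, which exists by hypothesis for every $\textbf{t}\in[0,1]^k$ and every $\alpha\in[0,1]$, and exploiting continuity in $\textbf{t}$ on the compact set $[0,1]^k$, I would define
$$[L]_\alpha=\left[\min_{\textbf{t}\in[0,1]^k}\ell(\textbf{t},\alpha),\ \max_{\textbf{t}\in[0,1]^k}\ell(\textbf{t},\alpha)\right].$$
To check that $L$ is genuinely a fuzzy number, I would verify the hypotheses of Lemma~\ref{lem1}: boundedness, monotonicity, and the one-sided continuity of the endpoints in $\alpha$. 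These transfer from the analogous properties of $[F_{\textbf{C}_\nu^k}(x)]_\alpha$ (guaranteed by the Stacking Theorem~\ref{th22}) by passing to the pointwise limit as $x\to x_0$, so that the Characterization Theorem~\ref{th2} validates $L$ as an element of $\mathcal{F}(\mathbb{R})$.

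Next, I would translate the convergence $F_{\textbf{C}_\nu^k}(x)\to L$ in Definition~\ref{def6} into an analytic estimate. Since both $F_{\textbf{C}_\nu^k}(x)$ and $L$ are parameterized over the same set $[\textbf{C}_\nu^k]_\alpha$, relation~\eqref{equ4} together with Definition~\ref{def333} gives
$$D(F_{\textbf{C}_\nu^k}(x),L)=\sup_{\alpha\in[0,1]}\max_{\textbf{t}\in[0,1]^k}\bigl|f_{\textbf{c}(\textbf{t},\alpha)}(x)-\ell(\textbf{t},\alpha)\bigr|.$$
Each inner difference tends to $0$ as $x\to x_0$ by hypothesis; the task is to show the outer $\sup\max$ does the same. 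This is the main obstacle: upgrading the pointwise convergence in $(\textbf{t},\alpha)$ to uniform convergence on the compact set $[0,1]^k\times[0,1]$. I would exploit the standing context preceding \eqref{eq3}, namely that $F_{\textbf{C}_\nu^k}$ arises from a continuous base map via Zadeh's extension (Theorem~\ref{th1}), so that $(\textbf{c},x)\mapsto f_{\textbf{c}}(x)$ is jointly continuous. Combined with compactness of $[\textbf{C}_\nu^k]_\alpha$ (uniformly in $\alpha$, by the nestedness in Theorem~\ref{th22}), a standard continuity-plus-compactness argument upgrades pointwise to uniform, yielding $D(F_{\textbf{C}_\nu^k}(x),L)\to 0$ and hence $\lim_{x\to x_0}F_{\textbf{C}_\nu^k}(x)=L$.

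Finally, the two equalities in the displayed formula are handled separately. The rightmost equality is the min/max characterization stated right after \eqref{eq3}, applied to $\ell$ in place of $f$. The middle equality $[L]_\alpha=[F_{\textbf{C}_\nu^k}(x_0)]_\alpha$ reduces to the identification $\ell(\textbf{t},\alpha)=f_{\textbf{c}(\textbf{t},\alpha)}(x_0)$, which holds precisely when each $f_{\textbf{c}(\textbf{t},\alpha)}$ is continuous at $x_0$; the resulting equality of $\alpha$-level sets then yields $L=F_{\textbf{C}_\nu^k}(x_0)$ by Definition~\ref{def1} and Remark~\ref{rem1}. The ``moreover'' assertion is then immediate: under the stated continuity of each $f_{\textbf{c}(\textbf{t},\alpha)}$ at $x_0$, the previous steps give $\lim_{x\to x_0}F_{\textbf{C}_\nu^k}(x)=F_{\textbf{C}_\nu^k}(x_0)$, which is exactly continuity of $F_{\textbf{C}_\nu^k}$ at $x_0$ in the sense of Definition~\ref{def7}.
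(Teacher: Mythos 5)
Your proposal is correct and follows the same skeleton as the paper's proof: define the candidate limit levelwise from the pointwise limits $\ell(\textbf{t},\alpha)=\lim_{x\to x_0}f_{\textbf{c}(\textbf{t},\alpha)}(x)$, certify it as a fuzzy number via the stacking/characterization machinery, and then drive $D(F_{\textbf{C}_\nu^k}(x),L)$ to zero. The one place where you genuinely diverge is the step you rightly single out as the main obstacle: passing from pointwise convergence in $(\textbf{t},\alpha)$ to uniformity of the bound $\sup_\alpha\max_{\textbf{t}}|f_{\textbf{c}(\textbf{t},\alpha)}(x)-\ell(\textbf{t},\alpha)|$. The paper handles this by picking parameters $\textbf{t}',\textbf{t}''$ at which the min and max of $|f-\ell|$ are attained and applying the pointwise limit there; strictly speaking those extremizers depend on $x$, so the resulting $\delta_1,\delta_2$ are not obviously uniform, and the paper's argument glosses over this. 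You instead invoke joint continuity of $(\textbf{c},x)\mapsto f_{\textbf{c}}(x)$ (from the standing assumption that $F_{\textbf{C}_\nu^k}$ is the Zadeh extension of a continuous map, via Theorem~\ref{th1}) together with compactness, which does deliver the uniform estimate --- at the price of importing a hypothesis slightly stronger than the bare statement of the proposition (joint continuity at $x_0$, rather than existence of the limit for each fixed $\textbf{t}$). Two further points in your favor: your remark that the displayed identity $[\lim_{x\to x_0}F_{\textbf{C}_\nu^k}(x)]_\alpha=[F_{\textbf{C}_\nu^k}(x_0)]_\alpha$ only holds under the continuity assumption of the ``moreover'' clause is a real observation the paper leaves implicit; and you should note that your formula for $D$ in terms of $\max_{\textbf{t}}$ over the multi-parameter box is an upper bound for the Hausdorff distance of Definition~\ref{def333} rather than an equality, which is all you need but should be stated as an inequality.
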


\begin{proof}
Suppose $\lim_{x\rightarrow x_0} f_{\textbf{c}(\textbf{t}, \alpha)}(x)
=a(\textbf{t},\alpha)$ for every $\textbf{c}(\textbf{t}, \alpha)
\in [\textbf{C}_{\nu}^k]_{\alpha}$. Therefore, for every $\textbf{t}
\in[0,1]^k$ and $\alpha\in[0,1]$, it can be concluded that
\begin{equation}
\label{equ22}
\forall \epsilon>0, ~\exists \delta>0 : ~|x-x_0|<\delta
\Rightarrow |f_{\textbf{c}(\textbf{t},\alpha)}(x)-a(\textbf{t},\alpha)|
<\epsilon.
\end{equation}
On the other hand, because for every fixed $x\in {\cal S}$ and
$\alpha\in[0,1]$ the functions $f_{\textbf{c}(\textbf{t},
\alpha)}(x)$ and $a(\textbf{t},\alpha)$ are continuous in
$\textbf{t}$, there exist $\textbf{t}',\textbf{t}''\in[0,1]^k$ such
that
$$
\underset{\textbf{t}}{\min}|f_{\textbf{c}(\textbf{t},\alpha)}(x)-a(\textbf{t},\alpha)|
=|f_{\textbf{c}(\textbf{t}',\alpha)}(x)-a(\textbf{t}',\alpha)|,
$$
$$
\underset{\textbf{t}}{\max}|f_{\textbf{c}(\textbf{t},\alpha)}(x)
-a(\textbf{t},\alpha)|=|f_{\textbf{c}(\textbf{t}'',\alpha)}(x)
-a(\textbf{t}'',\alpha)|.
$$
Then, from \eqref{equ22}, $\forall \epsilon>0, \exists ~\delta_1, \delta_2>0$ such that
$$
|x-x_0|<\delta_1 \Rightarrow |f_{\textbf{c}(\textbf{t}',\alpha)}(x)
-a(\textbf{t}',\alpha)|<\epsilon,
$$
$$
|x-x_0|<\delta_2 \Rightarrow |f_{\textbf{c}(\textbf{t}'',\alpha)}(x)-a(\textbf{t}'',\alpha)|<\epsilon.
$$
Furthermore, $f_{\textbf{c}(\textbf{t},\alpha)}$ satisfies Theorem~\ref{th22}
(the stacking theorem). Then, $a(\textbf{t},\alpha)$ fulfills the same property,
that is, there is a fuzzy number $A$ such that
$[A]_\alpha=\{a(\textbf{t},\alpha)| \textbf{t}\in[0,1]^k\}$.
Next, by choosing $\overline{\delta}=\min\{\delta_1,\delta_2\}$,
$$
D(F_{\textbf{C}_{\nu}^k}(x), A)
=\max\left\{ \underset{\textbf{t}}{\min}|f_{\textbf{c}(\textbf{t},\alpha)}(x)-a(\textbf{t},\alpha)|,
\underset{\textbf{t}}{\max}|f_{\textbf{c}(\textbf{t},\alpha)}(x)-a(\textbf{t},\alpha)| \right\}<\epsilon,
$$
whenever $|x-x_0|<\overline{\delta}$. Hence, by Definition~\ref{def6},
$\underset{x\rightarrow x_0}{\lim} F_{\textbf{C}_{\nu}^k}(x)=A$. Moreover,
\begin{align*}
\left[\underset{x\rightarrow x_0}{\lim} F_{\textbf{C}_{\nu}^k}(x)\right]_{\alpha} = [A]_{\alpha}
&= \left[\underset{\textbf{t}}{\min} ~a(\textbf{t}, \alpha),
\underset{\textbf{t}}{\max}~a(\textbf{t},\alpha)\right]\\
&=\left[\underset{\textbf{t}}{\min}\underset{x\rightarrow x_0}{\lim}
f_{\textbf{c}(\textbf{t}, \alpha)}(x), \underset{\textbf{t}}{\max}
\underset{x\rightarrow x_0}{\lim}f_{\textbf{c}(\textbf{t}, \alpha)}(x)\right].
\end{align*}
The continuity of $F_{\textbf{C}_{\nu}^k}$ at $x_0$ is proved similarly,
by using the continuity of $f_{\textbf{c}(\textbf{t},\alpha)}$ at $x_0$.
\end{proof}

\begin{proposition}
\label{prop4} If ${F_{\textbf{C}_{\nu}^k}: {\cal
S}\subseteq\mathbb{R}\rightarrow \mathcal{F}(\mathbb{R})}$ is
continuous at $x_0\in {\cal S}$ and
$[F_{\textbf{C}_{\nu}^k}(x)]_{\alpha} = \{ f_{\textbf{c}(\textbf{t},
\alpha)}(x)| $\\$f_{\textbf{c}(\textbf{t}, \alpha)}: {\cal
S}\subseteq\mathbb{R} \rightarrow\mathbb{R}, \textbf{c}(\textbf{t},
\alpha) \in [\textbf{C}_{\nu}^k]_{\alpha} \}$, then
$f_{\textbf{c}(\textbf{t}, \alpha)}$ is a continuous function at
$x_0$ for each $\textbf{t}\in[0,1]^k$ and $\alpha\in[0,1]$.
\end{proposition}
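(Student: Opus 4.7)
The plan is to establish this as the converse direction of Proposition~\ref{prop1} by a pointwise estimate against the metric $D$. Fix arbitrary $\textbf{t}_0 \in [0,1]^k$ and $\alpha_0 \in [0,1]$, write $g(x) := f_{\textbf{c}(\textbf{t}_0, \alpha_0)}(x)$, and try to bound $|g(x) - g(x_0)|$ directly by $D(F_{\textbf{C}_{\nu}^k}(x), F_{\textbf{C}_{\nu}^k}(x_0))$, so that continuity of $g$ at $x_0$ is immediate from the hypothesis on $F_{\textbf{C}_{\nu}^k}$.

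Concretely, given $\epsilon > 0$, continuity of $F_{\textbf{C}_{\nu}^k}$ at $x_0$ yields $\delta > 0$ such that $|x - x_0| < \delta$ implies $D(F_{\textbf{C}_{\nu}^k}(x), F_{\textbf{C}_{\nu}^k}(x_0)) < \epsilon$. The decisive step is then the chain
\[
|g(x) - g(x_0)| \leq \max_{\textbf{t} \in [0,1]^k} \bigl|f_{\textbf{c}(\textbf{t}, \alpha_0)}(x) - f_{\textbf{c}(\textbf{t}, \alpha_0)}(x_0)\bigr| \leq d\bigl([F_{\textbf{C}_{\nu}^k}(x)]_{\alpha_0},\, [F_{\textbf{C}_{\nu}^k}(x_0)]_{\alpha_0}\bigr) \leq D\bigl(F_{\textbf{C}_{\nu}^k}(x), F_{\textbf{C}_{\nu}^k}(x_0)\bigr) < \epsilon,
\]
where the first step is trivial (the max over $\textbf{t}$ dominates the value at the specific $\textbf{t}_0$), the second step mirrors the Hausdorff-distance expansion that already appears in the final lines of the proof of Proposition~\ref{prop1}, and the third step is the very definition of $D$ as a supremum over $\alpha$. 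Since $\textbf{t}_0$ and $\alpha_0$ were arbitrary, this yields continuity of $f_{\textbf{c}(\textbf{t}, \alpha)}$ at $x_0$ for every admissible $\textbf{t}$ and $\alpha$.

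The main obstacle I anticipate is justifying the middle inequality cleanly. The metric $d$ is defined in \eqref{equ4} via the one-parameter Hausdorff representation, whereas the representation in Proposition~\ref{prop4} carries a vector $\textbf{t} \in [0,1]^k$; the proof must identify the $\max_{\textbf{t}}$ appearing above with (an upper bound on) the one-parameter Hausdorff distance between the two $\alpha_0$-level intervals. This is precisely the reinterpretation of $D$ that the author silently performed inside the proof of Proposition~\ref{prop1}, where $D(F_{\textbf{C}_{\nu}^k}(x), A)$ was written as $\max\{\min_{\textbf{t}} |f - a|, \max_{\textbf{t}} |f - a|\}$. Once that identification is taken for granted, the entire argument collapses to a one-line $\epsilon$--$\delta$ verification and no hypothesis beyond the continuity of $F_{\textbf{C}_{\nu}^k}$ at $x_0$ and the given parametric representation is required.
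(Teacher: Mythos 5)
There is a genuine gap, and it sits exactly where you anticipated: the middle inequality
\[
\max_{\textbf{t}\in[0,1]^k}\bigl|f_{\textbf{c}(\textbf{t},\alpha_0)}(x)-f_{\textbf{c}(\textbf{t},\alpha_0)}(x_0)\bigr|
\;\leq\;
d\bigl([F_{\textbf{C}_{\nu}^k}(x)]_{\alpha_0},[F_{\textbf{C}_{\nu}^k}(x_0)]_{\alpha_0}\bigr)
\]
is false and cannot be taken for granted. The Hausdorff distance between the two level intervals only sees their endpoints, i.e.\ $\min_{\textbf{t}}$ and $\max_{\textbf{t}}$ of each family; it says nothing about how the point indexed by a fixed $\textbf{t}_0$ moves. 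The relation that does hold in general is the reverse one, $d\leq\max_{\textbf{t}}|\cdot|$ (a uniform pointwise bound under a common parametrization forces the ranges to be Hausdorff-close) --- that is what legitimises the analogous step in the proof of Proposition~\ref{prop1}, but here you need the converse direction. Concretely, take $k=1$, $A=(-1,0,1)$, $g(x)=1$ for $x<0$ and $g(x)=-1$ for $x\geq 0$, and $F_{\textbf{C}_{\nu}^1}(x)=A\cdot g(x)$ with $f_{\textbf{c}(t,\alpha)}(x)=(-1+\alpha+t(2-2\alpha))g(x)$. Since $[A]_{\alpha}$ is symmetric, $[F_{\textbf{C}_{\nu}^1}(x)]_{\alpha}=[-(1-\alpha),1-\alpha]$ for every $x$, so $D(F_{\textbf{C}_{\nu}^1}(x),F_{\textbf{C}_{\nu}^1}(0))\equiv 0$ and $F_{\textbf{C}_{\nu}^1}$ is continuous at $x_0=0$; yet at $\alpha_0=0$, $t_0=0$ the left-hand side of your chain equals $2$ for $x<0$ while the right-hand side is $0$. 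So your $\epsilon$--$\delta$ argument cannot close, and in fact no argument can without further hypotheses: the example shows that the individual $f_{\textbf{c}(\textbf{t},\alpha)}$ need not be continuous at $x_0$ under the stated assumptions.

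For comparison, the paper takes a different route: it invokes Proposition~\ref{prop3} to rewrite continuity as $\lim_{x\to x_0}(F_{\textbf{C}_{\nu}^k}(x)\ominus_p F_{\textbf{C}_{\nu}^k}(x_0))=0$ and then identifies the $\alpha$-level of this $p$-difference with the set of pointwise differences $\{f_{\textbf{c}(\textbf{t},\alpha)}(x)-f_{\textbf{c}(\textbf{t},\alpha)}(x_0)\}$, concluding that each such difference tends to $0$. That identification is not what Definition~\ref{def3} gives (the $p$-difference is formed through the single-parameter endpoint representation), and it conceals the same set-level-to-parameter-level leap that your chain makes explicit as a single inequality. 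Your version has the virtue of exposing the leap plainly --- which is precisely how one sees that it fails.
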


\begin{proof}
Since $F_{\textbf{C}_{\nu}^k}$ is continuous, from
Definition~\ref{def1}, Definition~\ref{def7} and
Proposition~\ref{prop3} it can be concluded that
\begin{align*}
\underset{x\rightarrow x_0}{\lim}F_{\textbf{C}_{\nu}^k}(x)
&=F_{\textbf{C}_{\nu}^k}(x_0) \Leftrightarrow
\underset{x\rightarrow x_0}{\lim}(F_{\textbf{C}_{\nu}^k}(x)\ominus_p 
F_{\textbf{C}_{\nu}^k}(x_0))=0\\
&\Leftrightarrow \{\underset{x\rightarrow
x_0}{\lim}(f_{\textbf{c}(\textbf{t}, \alpha)}(x)
-f_{\textbf{c}(\textbf{t}, \alpha)}(x_0))| f_{\textbf{c}(\textbf{t},
\alpha)}: {\cal S}\subseteq\mathbb{R} \rightarrow \mathbb{R},
\textbf{c}(\textbf{t}, \alpha) \in [\textbf{C}_{\nu}^k]_{\alpha}
\}=0.
\end{align*}
Therefore, for each $\textbf{t}\in[0,1]^k$ and $\alpha\in[0,1]$,
\begin{equation*}
\underset{x\rightarrow x_0}{\lim}(f_{\textbf{c}(\textbf{t}, \alpha)}(x)
-f_{\textbf{c}(\textbf{t}, \alpha)}(x_0))=0\Rightarrow
\underset{x\rightarrow x_0}{\lim}f_{\textbf{c}(\textbf{t}, \alpha)}(x)
=f_{\textbf{c}(\textbf{t}, \alpha)}(x_0).
\end{equation*}
The proof is complete.
\end{proof}

\begin{remark}
\label{remk1} Similarly to Proposition~\ref{prop4}, it can be shown
that if $F: {\cal S}\subseteq\mathbb{R} \rightarrow
\mathcal{F}(\mathbb{R})$ is a continuous fuzzy valued function with
$[F(x)]_{\alpha}
=\left\{f^-_{\alpha}(x)+t(f^+_{\alpha}(x)-f^-_{\alpha}(x))|
t\in[0,1] \right\}$, then the real valued functions
$f^-_{\alpha}(x)$ and $f^+_{\alpha}(x)$ are continuous.
\end{remark}


\subsection{Differentiation of fuzzy valued functions}

Based on the notion of $p$-difference, we start this section
with the definition of $p$-differentiability
of a fuzzy valued function.

\begin{definition}
\label{defi1}
Let $x_0 \in \, ]a, b[$ and $h$ be such that $x_0+h\in]a,b[$. Then the
$p$-derivative of the fuzzy valued function
$F: ]a,b[ \rightarrow \mathcal{F}(\mathbb{R})$ at $x_0$ is defined as
\begin{equation*}
F'_p(x_0)=\lim_{h\rightarrow 0}
\frac{1}{h}\left[F(x_0+h)\ominus_p F(x_0)\right].
\end{equation*}
\end{definition}

If $F'_p(x_0)\in \mathcal{F}(\mathbb{R})$ exists, then we say that $F$
is parametric differentiable ($p$-differentiable, for short) at $x_0$.

\begin{proposition}
\label{prop2}
Let $F_{\textbf{C}_{\nu}^k} : \, ]a,b[ \rightarrow \mathcal{F}(\mathbb{R})$
be defined in terms of its $\alpha$-level set
$$
[F_{\textbf{C}_{\nu}^k}]_{\alpha}
=\left\{f_{\textbf{c}(\textbf{t},\alpha)}(x)|
f_{\textbf{c}(\textbf{t},\alpha)}: ]a,b[\rightarrow \mathbb{R},
\textbf{c}(\textbf{t},\alpha)\in[\textbf{C}_{\nu}^k]_\alpha\right\}.
$$
If $f_{\textbf{c}(\textbf{t},\alpha)}(x)$ is differentiable at
$x_0 \in \, ]a,b[$ and for all $\alpha \in [0, 1]$
$f_{\textbf{c}(\textbf{t}, \alpha)}(x_0+h)-f_{\textbf{c}(\textbf{t}, \alpha)}(x_0)$
satisfy the stacking theorem, then $F_{\textbf{C}_{\nu}^k}$ is $p$-differentiable at $x_0$
and there exists $(F_{\textbf{C}_{\nu}^k})_p'(x_0)\in \mathcal{F}(\mathbb{R})$ such that
\begin{equation*}
[(F_{\textbf{C}_{\nu}^k})_p'(x_0)]_{\alpha}
=\left\{f'_{\textbf{c}(\textbf{t},\alpha)}(x_0)
\Big| f_{\textbf{c}(\textbf{t},\alpha)}: ]a,b[\rightarrow\mathbb{R},
\textbf{c}(\textbf{t},\alpha)\in[\textbf{C}_{\nu}^k]_\alpha\right\}.
\end{equation*}
Moreover,
\begin{equation}
\label{eq11}
[(F_{\textbf{C}_{\nu}^k})_p'(x_0)]_{\alpha}=\left[\underset{\textbf{t}}{\min}
f'_{\textbf{c}(\textbf{t},\alpha)}(x_0), \underset{\textbf{t}}{\max}
f'_{\textbf{c}(\textbf{t},\alpha)}(x_0) \right].
\end{equation}
\end{proposition}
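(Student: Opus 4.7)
The plan is to reduce the statement to an application of Proposition~\ref{prop1} applied to the difference-quotient fuzzy valued function, after checking that this quotient is itself a legitimate fuzzy valued function whose $\alpha$-level sets match the claimed form. Throughout, I would keep two things in view: ensuring that the $p$-difference inside the limit is well defined as a fuzzy number, and lifting pointwise-in-$\textbf{t}$ convergence of classical difference quotients to convergence in the metric $D$.

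First, I would fix $x_0 \in \, ]a,b[$ and take $h$ small enough that $x_0+h \in \, ]a,b[$. By Definition~\ref{def3} together with the standing hypothesis that $f_{\textbf{c}(\textbf{t},\alpha)}(x_0+h)-f_{\textbf{c}(\textbf{t},\alpha)}(x_0)$ satisfies the stacking theorem, Theorem~\ref{th2} gives a genuine fuzzy number $F(x_0+h)\ominus_p F(x_0)$ whose $\alpha$-level set is exactly $\{f_{\textbf{c}(\textbf{t},\alpha)}(x_0+h)-f_{\textbf{c}(\textbf{t},\alpha)}(x_0)\mid \textbf{t}\in[0,1]^k\}$. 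Scalar multiplication by $1/h$ (using Definition~\ref{def2}, and Remark~\ref{rem11} to track orientation when $h<0$) then yields that the $\alpha$-level set of $\tfrac{1}{h}[F(x_0+h)\ominus_p F(x_0)]$ is $\{h^{-1}(f_{\textbf{c}(\textbf{t},\alpha)}(x_0+h)-f_{\textbf{c}(\textbf{t},\alpha)}(x_0))\mid \textbf{t}\in[0,1]^k\}$. Next, for each fixed $\textbf{t}\in[0,1]^k$ and $\alpha\in[0,1]$, the classical differentiability of $f_{\textbf{c}(\textbf{t},\alpha)}$ at $x_0$ gives
$$\lim_{h\to 0}\frac{f_{\textbf{c}(\textbf{t},\alpha)}(x_0+h)-f_{\textbf{c}(\textbf{t},\alpha)}(x_0)}{h}=f'_{\textbf{c}(\textbf{t},\alpha)}(x_0),$$
which is precisely the hypothesis required to invoke Proposition~\ref{prop1} on the fuzzy valued function $h\mapsto \tfrac{1}{h}[F(x_0+h)\ominus_p F(x_0)]$. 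Proposition~\ref{prop1} then delivers a fuzzy limit, identifies the $\alpha$-level set with $\{f'_{\textbf{c}(\textbf{t},\alpha)}(x_0)\mid \textbf{t}\in[0,1]^k\}$, and by Definition~\ref{defi1} this limit is $(F_{\textbf{C}_{\nu}^k})_p'(x_0)$.

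Finally, to obtain the interval form \eqref{eq11}, I would use that by \eqref{eq22} the vector $\textbf{c}(\textbf{t},\alpha)$ is affine (hence continuous) in $\textbf{t}$, so $\textbf{t}\mapsto f'_{\textbf{c}(\textbf{t},\alpha)}(x_0)$ is continuous on the compact set $[0,1]^k$; thus the minimum and maximum are attained, and the interval representation in Proposition~\ref{prop1} gives \eqref{eq11} directly. The main obstacle I expect is the very first step: one is tempted to simply ``differentiate termwise'', but the whole argument depends on knowing a priori that $F(x_0+h)\ominus_p F(x_0)$ is a fuzzy number for all small $h$, and that is exactly where the stacking-theorem assumption on the family $f_{\textbf{c}(\textbf{t},\alpha)}(x_0+h)-f_{\textbf{c}(\textbf{t},\alpha)}(x_0)$ has to be used via Theorem~\ref{th2}; once this is secured, the rest is a packaged application of Propositions~\ref{prop11} and \ref{prop1}.
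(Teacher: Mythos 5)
Your proof is correct and follows essentially the same route as the paper's: write the $\alpha$-level set of the $p$-difference termwise, differentiate each $f_{\textbf{c}(\textbf{t},\alpha)}$ classically, use the stacking-theorem hypothesis to guarantee the result is a fuzzy number, and use continuity in $\textbf{t}$ for the interval form \eqref{eq11}. The only difference is organizational: you justify the interchange of limit and $\alpha$-level set by explicitly invoking Proposition~\ref{prop1} on the difference-quotient function, whereas the paper performs that interchange directly; your version is, if anything, slightly more careful on that point.
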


\begin{proof}
Using Definition~\ref{def3}, we can write that
\begin{equation*}
\left[F_{\textbf{C}_{\nu}^k}(x_0+h) \ominus_p
F_{\textbf{C}_{\nu}^k}(x_0)\right]_{\alpha}
= \left\{ f_{\textbf{c}(\textbf{t}, \alpha)}(x_0+h)
- f_{\textbf{c}(\textbf{t}, \alpha)}(x_0)\big|
~f_{\textbf{c}(\textbf{t}, \alpha)}: ]a,b[ \rightarrow \mathbb{R},
~\textbf{c}(\textbf{t}, \alpha) \in [\textbf{C}_{\nu}^k]_{\alpha} \right\}.
\end{equation*}
Because $f_{\textbf{c}(\textbf{t}, \alpha)}$ is differentiable at $x_0$
for every $\textbf{t}\in[0,1]^k$ and $\alpha\in[0,1]$,
$\underset{h\rightarrow 0}{\lim}\frac{f_{\textbf{c}(\textbf{t}, \alpha)}(x_0+h)
-f_{\textbf{c}(\textbf{t}, \alpha)}(x_0)}{h}$ exists and
\begin{equation}
\label{eq111}
\begin{split}
&\left[\underset{h\rightarrow 0}{\lim}\frac{1}{h}\left[F_{\textbf{C}_{\nu}^k}(x_0+h)
\ominus_p F_{\textbf{C}_{\nu}^k}(x_0)\right]\right]_{\alpha}\\
&\quad =\left\{ \underset{h\rightarrow 0}{\lim}\frac{f_{\textbf{c}(\textbf{t}, \alpha)}(x_0+h)
-f_{\textbf{c}(\textbf{t}, \alpha)}(x_0)}{h}\Big| f_{\textbf{c}(\textbf{t},\alpha)}
: ]a,b[\rightarrow\mathbb{R}, \textbf{c}(\textbf{t}, \alpha)
\in [\textbf{C}_{\nu}^k]_{\alpha} \right\}\\
&\quad =\left\{f'_{\textbf{c}(\textbf{t},\alpha)}(x_0)\Big|
f_{\textbf{c}(\textbf{t},\alpha)}: ]a,b[\rightarrow\mathbb{R},
\textbf{c}(\textbf{t},\alpha)\in[\textbf{C}_{\nu}^k]_\alpha\right\}.
\end{split}
\end{equation}
By assumption, $f_{\textbf{c}(\textbf{t}, \alpha)}(x_0+h)
-f_{\textbf{c}(\textbf{t}, \alpha)}(x_0)$ is an $\alpha$-level set of a fuzzy number.
Thus, there exists $(F_{\textbf{C}^k_\nu})_p'(x_0)\in\mathcal{F}(\mathbb{R})$ such that
\begin{equation}
\label{eq1111}
\left[\underset{h\rightarrow 0}{\lim}\frac{1}{h}\left[F_{\textbf{C}_{\nu}^k}(x_0+h)
\ominus_p F_{\textbf{C}_{\nu}^k}(x_0)\right]\right]_{\alpha}
= [(F_{\textbf{C}^k_\nu})_p'(x_0)]_{\alpha}.
\end{equation}
Equation \eqref{eq11} follows immediately from \eqref{eq111}, \eqref{eq1111},
and the continuity of $f'_{\textbf{c}(\textbf{t},\alpha)}(x_0)$
in $\textbf{t}$ for all $\alpha\in[0,1]$.
\end{proof}

\begin{example}
Consider the fuzzy valued function
$F_{\textbf{C}_{\nu}^1}(x)=(-1,1,2)\cdot e^{-x}$ and the
$\alpha$-level set $ [(-1,1,2)]_{\alpha}=[-1+2\alpha,~2-\alpha]$. By
\eqref{eq3} the parametric representation of
$F_{\textbf{C}_{\nu}^1}(x)$ is
$$
[F_{\textbf{C}_{\nu}^1}(x)]_{\alpha}
=\left\{(-1+2\alpha+t(3-3\alpha))e^{-x} | t\in[0,1] \right\}.
$$
It is possible to calculate its $p$-derivative from Proposition~\ref{prop2} as
$$
[(F_{\textbf{C}^1_\nu})_p'(x)]_{\alpha}=\{(1-2\alpha+t(3\alpha-3))e^{-x}
| t\in[0,1] \}.
$$
Then by \eqref{eq11}
$$[F_{\textbf{C}_{\nu}^1}(x)]_{\alpha}=[(-2+\alpha)e^{-x},~(1-2\alpha)e^{-x}],$$
that is, $(F_{\textbf{C}^1_\nu})_p'(x)=(-2,-1,1) \cdot e^{-x}$.
\end{example}

The following example shows that the converse
of Proposition~\ref{prop2} is not true.

\begin{example}
Let $F_{\textbf{C}_{\nu}^1}(x)=A \cdot g(x)$
be a fuzzy valued function with $A=(-3,-1,1,3)$ and
\begin{equation*}
g(x)=\left\{
\begin{array}{ll}
x,&~ x\geq0\\
-x,&~ x<0.\\
\end{array}
\right.
\end{equation*}
From Definition~\ref{defi1},
\begin{align*}
[(F_{\textbf{C}_{\nu}^1})'_p(0)]_{\alpha}
&=\underset{h\rightarrow 0}{\lim}\frac{1}{h}[F_{\textbf{C}_{\nu}^1}(h)
\ominus_p F_{\textbf{C}_{\nu}^1}(0)]_{\alpha}\\
&=\left\lbrace \underset{h\rightarrow 0}{\lim}\frac{f_{\textbf{c}(t,\alpha)}(h)
- f_{\textbf{c}(t,\alpha)}(0)}{h}~\Bigg{|}~ f_{\textbf{c}(t,\alpha)}(x)
=\left\lbrace
\begin{array}{ll}
(-3+2\alpha+t(6-4\alpha))(x),~~~~x\geq 0\\
(-3+2\alpha+t(6-4\alpha))(-x),~~x<0
\end{array}\right.;
t \in [0,1] \right\rbrace\\
&=\left\lbrace f_{\textbf{c}(t,\alpha)}(x)~\Bigg{|}
~ f_{\textbf{c}(t,\alpha)}(x)
=\left\lbrace
\begin{array}{ll}
-3+2\alpha+t(6-4\alpha),~~x \geq 0\\
3-2\alpha+t(-6+4\alpha),~~x<0
\end{array}\right.; t \in [0,1]\right\rbrace
=[-3+2\alpha,3-2\alpha].
\end{align*}
Then, $F_{\textbf{C}_{\nu}^1}(x)$ is $p$-differentiable at $x=0$
and $(F_{\textbf{C}_{\nu}^1})'_p(0)=A$ but
$$
f_{\textbf{c}(t,\alpha)}(x)
=\left\lbrace
\begin{array}{ll}
(-3+2\alpha+t(6-4\alpha))x,&~~~x \geq 0\\
(-3+2\alpha+t(6-4\alpha))(-x),&~~~x<0
\end{array}\right.
$$
is not differentiable at $x=0$ for every $t,\alpha \in [0,1]$.
\end{example}

\begin{theorem}
\label{th3}
Let $F : \, ]a,b[ \rightarrow \mathcal{F}(\mathbb{R})$ be defined
in terms of its $\alpha$-level set as
$[F(x)]_{\alpha}=\{ f_{(t,\alpha)}(x)=f^-_{\alpha}(x)
+t(f^+_{\alpha}(x)-f^-_{\alpha}(x)) |~ t\in[0,1]\}$.
Suppose that function $f_{(t,\alpha)}(x)$ is a real valued function,
differentiable w.r.t. $x$ and $t$, uniformly w.r.t. $\alpha\in[0,1]$.
Then, function $F$ is $p$-differentiable at a fixed $x_0 \in \, ]a, b[$
if and only if one of the following conditions is satisfied:
$$
\left\{
\begin{array}{ll}
\left(\Delta_{\alpha}f_{(0,\alpha)}\right)'(x_0)\geq 0,\\
\left(\Delta_{\alpha}f_{(1,\alpha)}\right)'(x_0)\leq 0,\\
\left(\Delta_tf_{(t,1)}\right)'(x_0)\geq 0,
\end{array}
\right.
\quad \text{ or } \quad
\left\{
\begin{array}{ll}
\left(\Delta_{\alpha}f_{(0,\alpha)}\right)'(x_0)\leq0,\\
\left(\Delta_{\alpha}f_{(1,\alpha)}\right)'(x_0)\geq0,\\
\left(\Delta_tf_{(t,1)}\right)'(x_0)\leq0,
\end{array}
\right.
$$
for all $\alpha\in[0,1]$. Moreover, $p$-differentiability
and generalized Hukuhara differentiability coincide.
\end{theorem}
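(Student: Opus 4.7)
The plan is to reduce $p$-differentiability at $x_0$ to the question of when a parametric family of differences represents a fuzzy number, and then to pass to the limit. By Definition~\ref{defi1}, $F$ is $p$-differentiable at $x_0$ iff the limit $\lim_{h\to 0}\tfrac{1}{h}[F(x_0+h)\ominus_p F(x_0)]$ exists as an element of $\mathcal{F}(\mathbb{R})$. By Definition~\ref{def3}, the $\alpha$-level set of $F(x_0+h)\ominus_p F(x_0)$ is parametrised by
$$c_h(t,\alpha):=f_{(t,\alpha)}(x_0+h)-f_{(t,\alpha)}(x_0),$$
so the proof reduces to the analysis of $c_h/h$ as $h\to 0$.

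First I would apply Proposition~\ref{prop01} directly to $c_h/h$ for $h$ in a punctured neighbourhood of $0$: the quotient represents a fuzzy number iff one of the two sign patterns \eqref{equ5} or \eqref{equ6} holds for the corresponding $c$. Dividing by $h>0$ preserves the three inequalities, while dividing by $h<0$ reverses them. This is precisely the mechanism producing the two symmetric alternatives in the theorem, and it also forces the left- and right-hand candidate limits to agree under the stated hypotheses.

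Next I would pass to the limit. Using the differentiability of $f_{(t,\alpha)}(x)$ in $x$ and in $t$ uniformly with respect to $\alpha\in[0,1]$, together with the convention on commuting mixed partial derivatives stated in the preliminaries, the limit $h\to 0$ can be exchanged with the operators $\Delta_\alpha$ and $\Delta_t$, giving
\begin{align*}
\lim_{h\to 0}\Delta_\alpha\!\left(\frac{c_h(t,\alpha)}{h}\right)
&=\bigl(\Delta_\alpha f_{(t,\alpha)}\bigr)'(x_0),\\
\lim_{h\to 0}\Delta_t\!\left(\frac{c_h(t,\alpha)}{h}\right)
&=\bigl(\Delta_t f_{(t,\alpha)}\bigr)'(x_0).
\end{align*}
Specialising to $t=0$, $t=1$ and $\alpha=1$ turns the inequalities of \eqref{equ5} and \eqref{equ6} verbatim into the two sign patterns stated in the theorem. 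Combined with Proposition~\ref{prop2}, which already supplies the $\alpha$-level representation of the candidate derivative from the pointwise derivatives $f'_{\textbf{c}(\textbf{t},\alpha)}$, this yields the claimed equivalence.

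For the coincidence with generalized Hukuhara differentiability, I would invoke the equivalence between the $p$-difference and the generalized Hukuhara difference noted just before Definition~\ref{def3}: since the two differences agree whenever both are defined, and both derivatives are obtained by the same scalar-multiple limit, the two notions of differentiability must coincide. The main obstacle I anticipate is the rigorous justification of interchanging $\lim_{h\to 0}$ with $\Delta_\alpha$ and $\Delta_t$ uniformly in $(t,\alpha)\in[0,1]^2$; this is where the hypothesis of \emph{uniform} differentiability is essential, and I expect to deploy a standard mean value theorem argument on $\alpha\mapsto c_h(t,\alpha)$ and $t\mapsto c_h(t,\alpha)$ to obtain the uniform convergence of the mixed-difference ratios, which then ensures via Lemma~\ref{lem1} and Theorem~\ref{th22} that the resulting limit is genuinely a fuzzy number rather than merely a family of real values.
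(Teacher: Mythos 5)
Your strategy rests on the same engine as the paper's proof, namely the Negoi\c t\u a--Ralescu characterization of when a $t$-parametrized family is the level-set family of a fuzzy number (Lemma~\ref{lem1}, equivalently Proposition~\ref{prop01}). The difference is where you test that criterion. The paper first uses Proposition~\ref{prop2} to write
$[F_p'(x_0)]_{\alpha}=\{(f^-_{\alpha})'(x_0)+t((f^+_{\alpha})'(x_0)-(f^-_{\alpha})'(x_0))\mid t\in[0,1]\}$
and then applies Lemma~\ref{lem1} directly to this \emph{limit} set: since it is affine in $t$, its endpoints sit at $t=0$ and $t=1$, and the three displayed inequalities are precisely the monotonicity-in-$\alpha$ conditions at those two endpoints together with the orientation condition at $\alpha=1$. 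For the last assertion the paper simply reads off that under the first alternative the level sets are $[(f^-_{\alpha})'(x_0),(f^+_{\alpha})'(x_0)]$, i.e.\ the (i)-gH derivative, and under the second they are $[(f^+_{\alpha})'(x_0),(f^-_{\alpha})'(x_0)]$, i.e.\ the (ii)-gH derivative. Your argument for the gH coincidence via the equivalence of $\ominus_p$ with the generalized Hukuhara difference is a legitimate shortcut, though it does not identify which alternative corresponds to which lateral gH case, which is the substance of the paper's second half.

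Your detour through Proposition~\ref{prop01} applied to $c_h/h$ for finite $h$, however, contains a step that does not go through as described. For the ``if'' direction you need the sign pattern of \eqref{equ5} or \eqref{equ6} to hold for $c_h/h$ at \emph{every} small $h\neq 0$, so that the difference quotient is a fuzzy number before you take the limit; but the weak inequalities at $x_0$ do not imply this. If, say, $\left(\Delta_{\alpha}f_{(0,\alpha)}\right)'(x_0)=0$ for some $\alpha$, the corresponding quantity $\Delta_{\alpha}c_h(0,\alpha)/h$ may change sign for $h$ arbitrarily close to $0$, so $F(x_0+h)\ominus_p F(x_0)$ need not exist for those $h$ even though the candidate derivative is a perfectly good fuzzy number. (Relatedly, the two alternatives in the theorem are not produced by the sign of $h$: dividing by $h<0$ merely swaps which of \eqref{equ5}/\eqref{equ6} is satisfied, leaving the disjunction invariant; the two alternatives come from the two orientations in Proposition~\ref{prop01}, i.e.\ from whether the level-set length is increasing or decreasing.) The fix is exactly the paper's move: do not require the finite-$h$ quotient to be a fuzzy number at all, work with the set-of-functions representation of the quotient, pass to the limit $f'_{(t,\alpha)}(x_0)$ first (your uniformity argument is the right tool here), and only then apply Lemma~\ref{lem1} to the limit object.
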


\begin{proof}
The first part of this theorem can be proved by Lemma~\ref{lem1}.
For the second part, using Proposition~\ref{prop2}, we have
\begin{align*}
&[F_p'(x_0)]_{\alpha}=\left\{\lim_{h\rightarrow 0}
\frac{f^-_\alpha(x_0+h)+t(f^+_\alpha(x_0+h)-f^-_\alpha(x_0+h))
-f^-_\alpha(x_0)-t(f^+_\alpha(x_0)-f^-_\alpha(x_0))}{h}\Big| t\in[0,1]\right\}\\
&\hspace{1cm}=\left\{ \lim_{h\rightarrow 0}
\frac{f^-_\alpha(x_0+h)-f^-_\alpha(x_0)}{h}+t\lim_{h\rightarrow 0}
\frac{f^+_\alpha(x_0+h)-f^+_\alpha(x_0)}{h}-t\lim_{h\rightarrow 0}
\frac{f^-_\alpha(x_0+h)-f^-_\alpha(x_0)}{h}\Big| t\in[0,1]\right\}\\
&\hspace{1cm}=\left\{ (f^-_{\alpha})'(x_0)+t((f^+_{\alpha})'(x_0)
-(f^-_{\alpha})'(x_0))| t\in[0,1] \right\}.
\end{align*}
If the first condition hold, then $(f^-_{\alpha})'(x_0)
\leq(f^+_{\alpha})'(x_0)$ for all $\alpha \in [0, 1]$.
Thus, it follows from the non-decreasing representation that
\begin{equation*}
[F_p'(x_0)]_{\alpha}=[(f^-_{\alpha})'(x_0), (f^+_{\alpha})'(x_0)],
\end{equation*}
which is coincident with the (i)-gH-differentiability concept.
If the second condition hold, then
$(f^-_{\alpha})'(x_0)\geq(f^+_{\alpha})'(x_0)$ for all $\alpha \in [0, 1]$.
Thus, by the non-increasing representation,
\begin{equation*}
[F_p'(x_0)]_{\alpha}=[(f^+_{\alpha})'(x_0), (f^-_{\alpha})'(x_0)],
\end{equation*}
which is coincident with the (ii)-gH-differentiability concept.
\end{proof}

\begin{remark}
\label{remk2}
Due to Definition~\ref{defi1} of $p$-derivative and the two parametric
representations \eqref{equ1} and \eqref{eq3} for fuzzy valued functions,
it cannot be expected, in general, that the derivative of a fuzzy valued
function coincides on the two representations.
In fact, the sign of the independent variable $x$ is not considered in the
$p$-derivative, based on representation \eqref{eq3}, while the
$p$-derivative on representation \eqref{equ1} depends on the sign of $x$.
For example, consider the fuzzy valued function
$F_{\textbf{C}_{\nu}^2}(x)=(-1,2,3)x+(2,4,5)e^x$,
$x\in [-3,6]$. Its corresponding parametric representations are
\begin{equation*}
[F_{\textbf{C}_{\nu}^2}(x)]_{\alpha}=
\left\lbrace\begin{array}{ll}
\{(3-\alpha)x+(2+2\alpha)e^x+t((-4+2\alpha)x+(3+\alpha)e^x)~|~t\in[0,1]\},~-3\leq x \leq 0\\
\{(-1+3\alpha)x+(2+2\alpha)e^x+t((4-2\alpha)x+(3-\alpha)e^x)~|~t\in[0,1]\},~~~ 0\leq x \leq 6
\end{array}\right.
\end{equation*}
and
$$
[F_{\textbf{C}_{\nu}^2}(x)]_{\alpha}=\{(-1+3\alpha+t_1(4-4\alpha))x
+(2+2\alpha+t_2(3-3\alpha))e^x|t_1,t_2 \in [0,1]\}.
$$
Their $p$-derivatives are given by
\begin{align*}
[F'_{\textbf{C}_{\nu}^2}(x)]_{\alpha}
&=
\left\lbrace\begin{array}{lc}
\{(3-\alpha)+(2+2\alpha)e^x+t((-4+2\alpha)+(3+\alpha)e^x)~|~t\in[0,1]\},&-3\leq x \leq 0\\
\{(-1+3\alpha)+(2+2\alpha)e^x+t((4-2\alpha)+(3-\alpha)e^x)~|~t\in[0,1]\},&~~0\leq x \leq 6
\end{array}\right.\\
&=\left\lbrace\begin{array}{ll}
~[3-\alpha+(2+2\alpha)e^x,-1+3\alpha+(5-\alpha)e^x],&-3\leq x \leq 0\\
~[-1+3\alpha+(2+2\alpha)e^x,3-\alpha+(5-\alpha)e^x],&~~0\leq x \leq 6
\end{array}\right.
\end{align*}
and
\begin{align*}
[F'_{\textbf{C}_{\nu}^2}(x)]_{\alpha}&=\{(-1+3\alpha+t_1(4-4\alpha))
+(2+2\alpha+t_2(3-3\alpha))e^x|t_1,t_2 \in [0,1]\}\\
&=[-1+3\alpha+(2+2\alpha)e^x,3-\alpha+(5-\alpha)e^x],
\end{align*}
respectively, which are clearly not the same.
\end{remark}

According to Theorem~\ref{th3}, when $f_{(t,\alpha)}(x)$ is differentiable,
two cases can be considered for the definition of $p$-differentiability,
corresponding to the non-decreasing and non-increasing parametric
representations \eqref{eq1} and \eqref{eq2}.

\begin{definition}
\label{def777}
Let $F : \, ]a,b[ \rightarrow \mathcal{F}(\mathbb{R})$ be a fuzzy valued function and
$$
[F(x)]_{\alpha}=\left\{ f_{(t,\alpha)}(x)
=f^-_{\alpha}(x)+t(f^+_{\alpha}(x)-f^-_{\alpha}(x)) | t\in[0,1]\right\}
$$
with $f_{(t,\alpha)}(x)$ differentiable at $x_0\in]a,b[$. Then,
\begin{itemize}
\item  $F$ is called $i$-$p$-differentiable at $x_0$ if
\begin{equation}
\label{eq33}
[F_p'(x_0)]_{\alpha}=\{(f^-_\alpha)'(x_0)
+t((f^+_\alpha)'(x_0)-(f^-_\alpha)'(x_0)) |~ t\in[0,1] \},
\end{equation}
\item  $F$ is called $d$-$p$-differentiable at $x_0$ if
\begin{equation}
\label{eq333}
[F_p'(x_0)]_{\alpha}=\{ (f^+_\alpha)'(x_0)+t((f^-_\alpha)'(x_0)
-(f^+_\alpha)'(x_0)) |~ t\in[0,1] \}.
\end{equation}
\end{itemize}
\end{definition}

The concept of switching point can be extended as follows.

\begin{definition}
\label{def77}
A point $x_0 \in \, ]a,b[$ is said to be a switching point for the differentiability of $F$,
if in any neighbourhood $N$ of $x_0$ there exist points $x_1<x_0<x_2$ such that
\begin{description}
\item[type-I:] at $x_1$ \eqref{eq33} holds while \eqref{eq333} does not hold and, at $x_2$,
\eqref{eq333} holds and \eqref{eq33} does not hold;

\item[type-II:] at $x_1$ \eqref{eq333} holds while \eqref{eq33} does not hold and,
at $x_2$, \eqref{eq33} holds and \eqref{eq333} does not hold.
\end{description}
\end{definition}

Using Definition~\ref{def77} and Theorem~\ref{th3},
it is easy to find switching points.

\begin{example}
\label{exam1}
Let us consider the fuzzy valued function
$F : \, ]-10,10[ \rightarrow \mathcal{F}(\mathbb{R})$ defined by
\begin{equation*}
F(x)=(2,4,5,8) \cdot \left(\cos(x)-\frac{x^2}{32}\right).
\end{equation*}
Its $\alpha$-level set is
\begin{equation*}
[F(x)]_{\alpha}
=
\begin{cases}
[(8-3\alpha)(\cos(x)-\frac{x^2}{32}),
(2+2\alpha)(\cos(x)-\frac{x^2}{32})],
& -10<x<-1.5004\\
[(2+2\alpha)(\cos(x)-\frac{x^2}{32}), (8-3\alpha)(\cos(x)-\frac{x^2}{32})],
& -1.5004\leq x<1.5004\\
[(8-3\alpha)(\cos(x)-\frac{x^2}{32}), (2+2\alpha)(\cos(x)-\frac{x^2}{32})],
& 1.5004\leq x<10,
\end{cases}
\end{equation*}
which is illustrated in Figure~\ref{fig1:sub1}.
We have
\begin{align*}
\left(\Delta_{\alpha}f_{(0,\alpha)}\right)'(x)
&=
\left\{
\begin{array}{ll}
3\sin(x)+\frac{3x}{16}, ~~~~~~~~~~ -10<x<-1.5004\\
-2\sin(x)-\frac{x}{8}, ~~ -1.5004\leq x<1.5004\\
3\sin(x)+\frac{3x}{16}, ~~~~~~~~ 1.5004\leq x<10,
\end{array}
\right.\\
\left(\Delta_{\alpha}f_{(1,\alpha)}\right)'(x)
&=
\left\{
\begin{array}{ll}
-2\sin(x)-\frac{x}{8}, ~~~~~~~~ -10<x<-1.5004\\
3\sin(x)+\frac{3x}{16}, ~~~~ -1.5004\leq x<1.5004\\
-2\sin(x)-\frac{x}{8}, ~~~~~ 1.5004\leq x<10,
\end{array}
\right.\\
\left(\Delta_tf_{(t,1)}\right)'(x)
&= \left\{
\begin{array}{ll}
\sin(x)+\frac{x}{16}, ~~~~~~~~~~~ -10<x<-1.5004\\
-\sin(x)-\frac{x}{16}, ~~ -1.5004\leq x<1.5004\\
\sin(x)+\frac{x}{16}, ~~~~~~~~~ 1.5004\leq x<10.
\end{array}
\right.
\end{align*}
The derivatives of $\Delta_{\alpha}f_{(0,\alpha)},
\Delta_{\alpha}f_{(1,\alpha)}$ and $\Delta_tf_{(t,1)}$ are given
in Figure~\ref{fig1:sub2} and, from Definition~\ref{def77} and Theorem~\ref{th3},
it can be deduced that the function is $p$-differentiable and the points
$x_1=-1.5004$, $x_2=1.5004$, $x_3=-5.9052$ and $x_4=5.9052$ are switching points
of type-II. Additionally, the points $x_5=0$, $x_6=-3.3527$ and $x_7=3.3527$
are switching points of type-I.
\begin{figure}
\centering
{\includegraphics[height=3.9cm,width=0.5\linewidth]{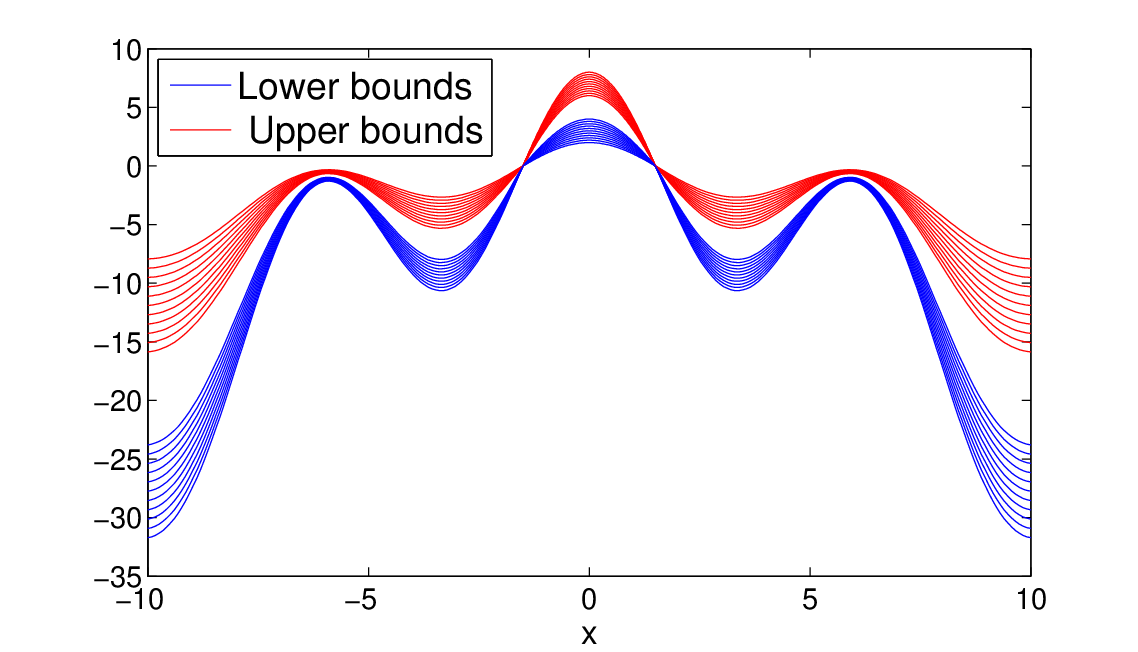}\label{fig1:sub1}}
{\includegraphics[height=3.9cm,width=0.5\linewidth]{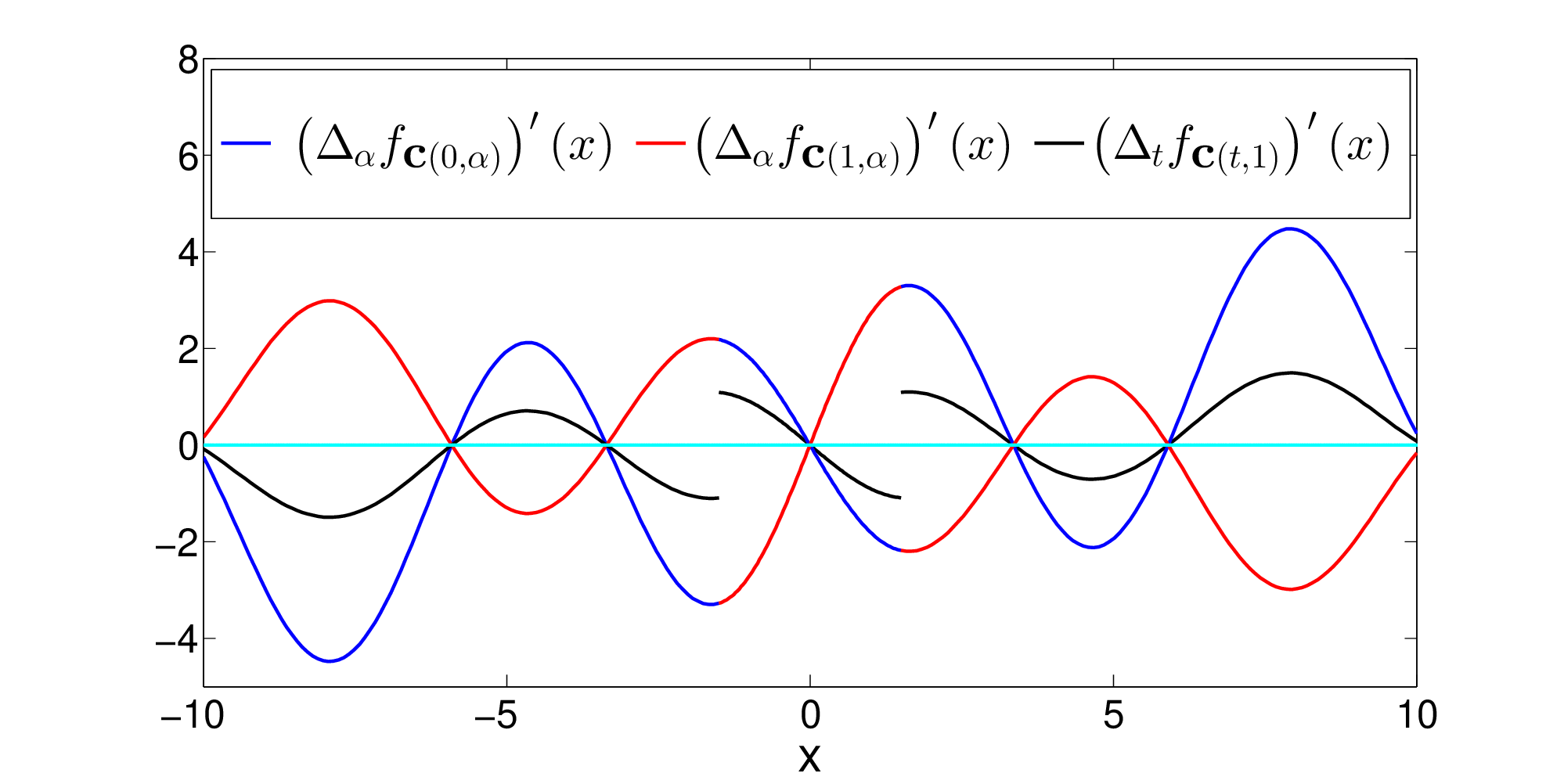}\label{fig1:sub2}}
\caption{Function of Example~\ref{exam1}, (a) $\alpha$-level set,
(b) derivatives of the $\alpha$-level set.} \label{fig1}
\end{figure}
\end{example}

Based on the notion of $gp$-difference introduced in Definition~\ref{def33},
the following $gp$-differentiability concept is proposed,
which further extends the notion of $p$-differentiability.

\begin{definition}
\label{def8}
Let $x_0 \in \, ]a, b[$ and $h$ be such that $x_0+h \in \, ]a,b[$.
Then the $gp$-derivative of the fuzzy valued function
$F : \, ]a,b[ \rightarrow \mathcal{F}(\mathbb{R})$ at $x_0$ is defined by
$$
F'_{gp}(x_0)=\lim_{h\rightarrow 0}\frac{1}{h}\left[F(x_0+h)
\ominus_{gp} F(x_0)\right].
$$
\end{definition}

If $F_{gp}(x_0)\in \mathcal{F}(\mathbb{R})$ exists, then $F$
is said to be generalized parametric differentiable
($gp$-differentiable, for short) at $x_0$.
In the following theorem, a characterization and a practical
formula for the $gp$-derivative is given.

\begin{theorem}
Let $F_{\textbf{C}_{\nu}^k} : \, ]a,b[ \rightarrow \mathcal{F}(\mathbb{R})$ be such that
$[F_{\textbf{C}_{\nu}^k}(x)]_{\alpha} =\{f_{\textbf{c}(\textbf{t}, \alpha)} (x)
| f_{\textbf{c}(\textbf{t}, \alpha)}: [a,b] \rightarrow \mathbb{R},
~\textbf{c}(\textbf{t}, \alpha)\in [\textbf{C}_{\nu}^k]_\alpha \}$.
If $f_{\textbf{c}(\textbf{t},\alpha)}$ is a differentiable real valued function w.r.t. $x$,
uniformly w.r.t. $\alpha\in[0,1]$, then $F_{\textbf{C}_{\nu}^k}(x)$
is $gp$-differentiable at $x_0 \in \, ]a,b[$ and
\begin{equation*}
[(F_{\textbf{C}_{\nu}^k})'_{gp}(x_0)]_{\alpha}
=\left[\inf_{\beta\geq\alpha}\min_{\textbf{t}}\left(
f'_{\textbf{c}(\textbf{t},\beta)}(x_0)\right),
\sup_{\beta\geq\alpha}\max_{\textbf{t}}\left(
f'_{\textbf{c}(\textbf{t},\beta)}(x_0)\right) \right].
\end{equation*}
\end{theorem}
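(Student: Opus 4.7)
The plan is to unfold Definition~\ref{def8} by substituting the explicit formula for the $gp$-difference from Definition~\ref{def33}, then pass the limit $h\to 0$ through the nested $\inf/\sup$ and $\min/\max$ operators using the uniform differentiability hypothesis.

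First, I would apply Definition~\ref{def33} with $A=F_{\textbf{C}_{\nu}^k}(x_0+h)$ and $B=F_{\textbf{C}_{\nu}^k}(x_0)$. Since both fuzzy numbers share the parametric representation family $\{f_{\textbf{c}(\textbf{t},\beta)}(\cdot)\mid \textbf{t}\in[0,1]^k\}$ with the same $\textbf{c}(\textbf{t},\beta)\in[\textbf{C}_\nu^k]_\beta$, subtraction happens coefficientwise in $\textbf{t}$, yielding
$$
[F_{\textbf{C}_{\nu}^k}(x_0+h)\ominus_{gp}F_{\textbf{C}_{\nu}^k}(x_0)]_\alpha
=\left[\inf_{\beta\geq\alpha}\min_{\textbf{t}}\Phi_h(\textbf{t},\beta),\;
\sup_{\beta\geq\alpha}\max_{\textbf{t}}\Phi_h(\textbf{t},\beta)\right],
$$
where $\Phi_h(\textbf{t},\beta):=f_{\textbf{c}(\textbf{t},\beta)}(x_0+h)-f_{\textbf{c}(\textbf{t},\beta)}(x_0)$.

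Second, I would scale by $1/h$. For $h>0$ the interval endpoints preserve their roles, while for $h<0$ the endpoints swap and signs flip inside the $\min/\max$; in either case, once the one-sided limits exist and agree, the two-sided object reassembles into a single interval. The key analytic step is then the third: by the assumption that $f_{\textbf{c}(\textbf{t},\beta)}$ is differentiable in $x$, uniformly in $\alpha$ (hence uniformly in $\beta\in[\alpha,1]$), and because the map $\textbf{t}\mapsto f_{\textbf{c}(\textbf{t},\beta)}(x)$ is continuous (indeed affine) on the compact set $[0,1]^k$, the difference quotient $\Phi_h(\textbf{t},\beta)/h$ converges to $f'_{\textbf{c}(\textbf{t},\beta)}(x_0)$ uniformly on $[0,1]^k\times[\alpha,1]$. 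Uniform convergence allows the limit to commute with $\inf_{\beta\geq\alpha}$, $\sup_{\beta\geq\alpha}$, $\min_{\textbf{t}}$, and $\max_{\textbf{t}}$, producing the claimed endpoints.

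Fourth, I would verify the output is a bona fide $\alpha$-level set of a fuzzy number via Theorem~\ref{th2}: monotonicity of the endpoints in $\alpha$ follows from the fact that $\inf_{\beta\geq\alpha}$ is non-increasing in $\alpha$ while $\sup_{\beta\geq\alpha}$ is non-decreasing, and the left-continuity plus closure conditions transfer from those of the underlying $\alpha$-level structure of $\textbf{C}_\nu^k$. Combining with Definition~\ref{def8} then yields the stated formula for $[(F_{\textbf{C}_{\nu}^k})'_{gp}(x_0)]_\alpha$. The main obstacle is the commutation of the limit with the nested $\inf/\sup$ over the uncountable parameter set $[\alpha,1]\times[0,1]^k$: pointwise differentiability would not suffice, and this is precisely where the hypothesis of uniformity in $\alpha$ (together with continuity in $\textbf{t}$) earns its keep.
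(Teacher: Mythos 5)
Your proposal is correct and follows essentially the same route as the paper: unfold Definition~\ref{def8} via Definition~\ref{def33}, then use the uniform differentiability hypothesis to pass the limit of the difference quotients through the nested $\inf/\sup$ and $\min/\max$. The only difference is one of detail: the paper states the interchange in one line and delegates the remaining verification (well-definedness of the resulting fuzzy number) to Theorem~34 of \cite{Bede2013}, whereas you spell out the uniform-convergence argument on $[0,1]^k\times[\alpha,1]$ and the check of the conditions of Theorem~\ref{th2} explicitly.
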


\begin{proof}
From Definitions~\ref{def33} and \ref{def8},
\begin{align*}
[(F_{\textbf{C}_{\nu}^k})'_{gp}(x_0)]_{\alpha}
= \lim_{h\rightarrow 0}\frac{1}{h}\Big[\inf_{\beta\geq\alpha}\min_{\textbf{t}}(
f_{\textbf{c}(\textbf{t},\alpha)}(x_0+h)
&-f_{\textbf{c}(\textbf{t},\alpha)}(x_0)),\\
& \sup_{\beta\geq\alpha}\max_{\textbf{t}}( f_{\textbf{c}(\textbf{t},\alpha)}(x_0+h)
-f_{\textbf{c}(\textbf{t},\alpha)}(x_0)) \Big].
\end{align*}
Since $f_{\textbf{c}(\textbf{t},\alpha)}$ is differentiable, then
\begin{equation*}
[(F_{\textbf{C}_{\nu}^k})'_{gp}(x_0)]_{\alpha}
=\left[\inf_{\beta\geq\alpha}\min_{\textbf{t}}\left(
f'_{\textbf{c}(\textbf{t},\beta)}(x_0)\right),
\sup_{\beta\geq\alpha}\max_{\textbf{t}}\left(
f'_{\textbf{c}(\textbf{t},\beta)}(x_0)\right) \right]
\end{equation*}
for any $\alpha\in[0,1]$. The rest of the proof is similar to the proof
of Theorem~34 of \cite{Bede2013}.
\end{proof}

Similarly, if $F : \, ]a,b[ \rightarrow \mathcal{F}(\mathbb{R})$
is a fuzzy valued function with $\alpha$-level set
$$
[F]_{\alpha}=\left\{f_{(t,\alpha)}(x)=f^-_{\alpha}(x)
+t\left(f^+_{\alpha}(x)-f^-_{\alpha}(x)\right)| ~t\in[0,1]\right\},
$$
then
\begin{equation*}
[F'_{gp}(x_0)]_{\alpha}=\left[\inf_{\beta\geq\alpha}
\min_t\left(f'_{(t,\beta)}(x_0)\right), \sup_{\beta\geq\alpha}
\max_t\left( f'_{(t,\beta)}(x_0)\right) \right].
\end{equation*}

\begin{example}
\label{exam2}
Consider function $F: [0,1]\rightarrow \mathcal{F}(\mathbb{R})$
having the $\alpha$-level set
\begin{equation*}
[F(x)]_{\alpha}=\left\{f_{(t,\alpha)}(x)| f_{(t,\alpha)}(x)
=\alpha x^2+t(x^2+1-\alpha); t\in[0,1] \right\}.
\end{equation*}
The derivatives are
\begin{equation*}
\left( \Delta_{\alpha}f_{(0,\alpha)}\right)'(x)
=\left( \Delta_{\alpha}f_{(1,\alpha)}\right)'(x)
= \left( \Delta_tf_{(t,1)}\right)'(x)=2x.
\end{equation*}
From Theorem~\ref{th3} and Definition~\ref{def777},
the function is not $p$-differentiable but it is $gp$-differentiable:
\begin{align*}
[F'_{gp}(x)]_{\alpha}&=\left[\inf_{\beta\geq\alpha}\min_{t}(2\beta x+2tx),
\sup_{\beta\geq\alpha}\max_{t}(2\beta x+2tx)\right]\\
&=
\begin{cases}
[\underset{\beta\geq\alpha}\inf(2\beta x+2x),
\underset{\beta\geq\alpha}\sup(2\beta x)], & -1\leq x\leq 0\\
[\underset{\beta\geq\alpha}\inf(2\beta x),
\underset{\beta\geq\alpha}\sup(2\beta x+2x)], & 0\leq x\leq 1
\end{cases}\\
&=
\begin{cases}
[4x, 2\alpha x], & -1\leq x\leq 0\\
[2\alpha x, 4x], & 0\leq x\leq 1.
\end{cases}
\end{align*}
The $\alpha$-level set of $F'_{gp}(x)$ is shown in Figure~\ref{fig2}.
\begin{figure}
\centering{\includegraphics[height=5cm,width=10cm]{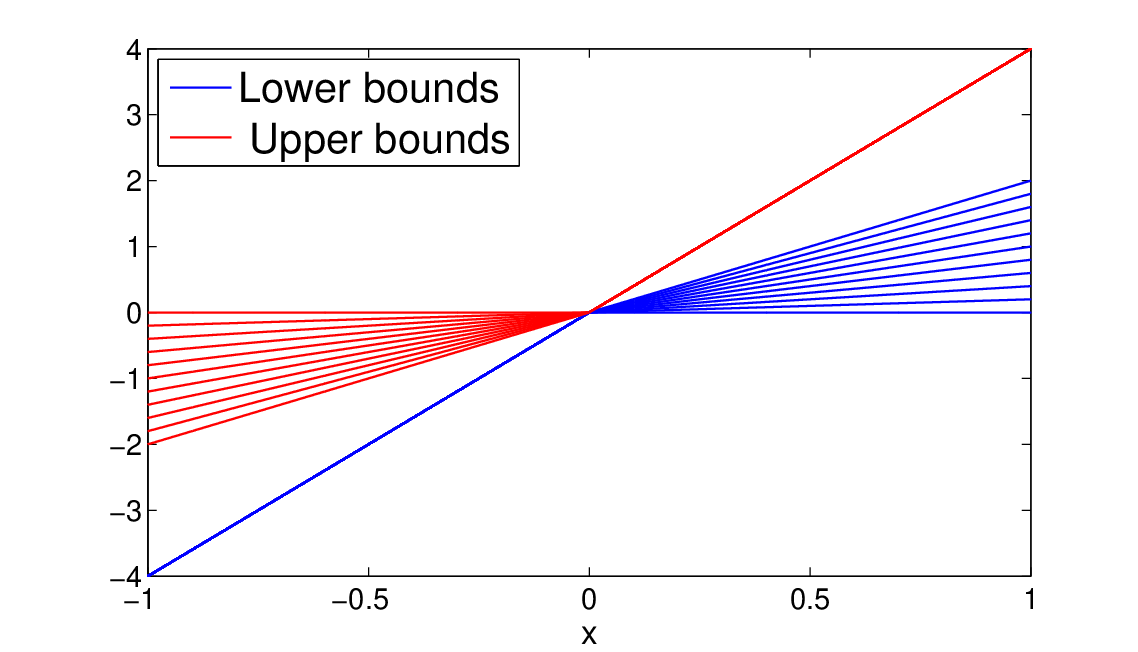}
\caption{\footnotesize{The $\alpha$-level set of the $gp$-derivative
of function in Example~\ref{exam2}.}}\label{fig2}}
\end{figure}
\end{example}

\begin{example}
\label{exam3}
Let us consider the function $F: [0,1]\rightarrow \mathcal{F}(\mathbb{R})$ defined by
\begin{equation*}
[F(x)]_{\alpha}=\left\{f_{(t,\alpha)}(x)| f_{(t,\alpha)}(x)
=x e^{-x}+\alpha^2(e^{-x^2}+x-xe^{-x})+t(1-\alpha^2)(2e^{-x^2}+e^x-xe^{-x}); t\in[0,1] \right\}.
\end{equation*}
We have
\begin{align*}
\left( \Delta_{\alpha}f_{(0,\alpha)}\right)'(x)
&=2\alpha(1-2xe^{-x^2}-e^{-x}+xe^{-x}),\\
\left( \Delta_{\alpha}f_{(1,\alpha)}\right)'(x)
&=-2\alpha(e^x-1-2xe^{-x^2}),\\
\left( \Delta_tf_{(t,1)}\right)'(x)
&=0.
\end{align*}
The $\alpha$-level set of $\left( \Delta_{\alpha}f_{(0,\alpha)}\right)'(x)$,
$\left( \Delta_{\alpha}f_{(1,\alpha)}\right)'(x)$ and
$\left( \Delta_tf_{(t,1)}\right)'(x)$ are given in Figure~\ref{fig1a}.
It is easy to check with Theorem~\ref{th3} and Definition~\ref{def777} that
$F(x)$ is $d$-$p$-differentiable on the subinterval $[0,x_1]$
and $i$-$p$-differentiable on the subinterval $[x_2,1]$,
where $x_1\approx 0.6103$ and $x_2\approx 0.7106$,
while it is not $p$-differentiable on the subinterval $[x_1,x_2]$ (see Figure~\ref{fig1a}).
However, $F(x)$ is $gp$-differentiable with
{\small
\begin{align*}
&[F'_{gp}(x)]_{\alpha}\\
&=\Big[\inf_{\beta\geq\alpha}\min_{\textbf{t}}\left((1-x)e^{-x}+\beta^2(1-2xe^{-x^2}-e^{-x}+xe^{-x})
+t(1-\beta^2)(-4xe^{-x^2}+e^{x}-e^{-x}+xe^{-x})\right),\\
&\quad \sup_{\beta\geq\alpha}\max_{\textbf{t}}\left((1-x)e^{-x}+\beta^2(1-2xe^{-x^2}-e^{-x}+xe^{-x})
+t(1-\beta^2)(-4xe^{-x^2}+e^{x}-e^{-x}+xe^{-x})\right)\Big]\\
&=
\begin{cases}
[\underset{\beta\geq\alpha}\inf(1-2xe^{-x^2}+(1-\beta^2)(e^x-1-2xe^{-x^2})), \underset{\beta\geq\alpha}\sup((1-x)e^{-x}+\beta^2(1-2xe^{-x^2}-e^{-x}+xe^{-x}))], & 0\leq x\leq 0.6367\\
[\underset{\beta\geq\alpha}\inf((1-x)e^{-x}+\beta^2(1-2xe^{-x^2}-e^{-x}+xe^{-x})),  \underset{\beta\geq\alpha}\sup(1-2xe^{-x^2}+(1-\beta^2)(e^x-1-2xe^{-x^2}))], & 0.6367\leq x\leq 1
\end{cases}\\
&=
\begin{cases}
\left[1-2xe^{-x^2}+(1-\alpha^2)(e^x-1-2xe^{-x^2}),
(1-x)e^{-x}+\alpha^2(1-2xe^{-x^2}-e^{-x}+xe^{-x}) \right], & 0\leq x\leq 0.6103\\
[1-2xe^{-x^2}, (1-x)e^{-x}+\alpha^2(1-2xe^{-x^2}-e^{-x}+xe^{-x})], & 0.6103\leq x\leq 0.6367\\
[1-2xe^{-x^2}, 1-2xe^{-x^2}+(1-\alpha^2)(e^x-1-2xe^{-x^2}) ], & 0.6367\leq x\leq 0.7106\\
[(1-x)e^{-x}+\alpha^2(1-2xe^{-x^2}-e^{-x}+xe^{-x}),
1-2xe^{-x^2}+(1-\alpha^2)(e^x-1-2xe^{-x^2})], & 0.7106\leq x\leq 1.
\end{cases}
\end{align*}}
The $\alpha$-level set of $F'_{gp}(x)$ is shown in Figure~\ref{fig11}.
\begin{figure}
\centering
{\includegraphics[width=7.7cm,height=4.8cm]{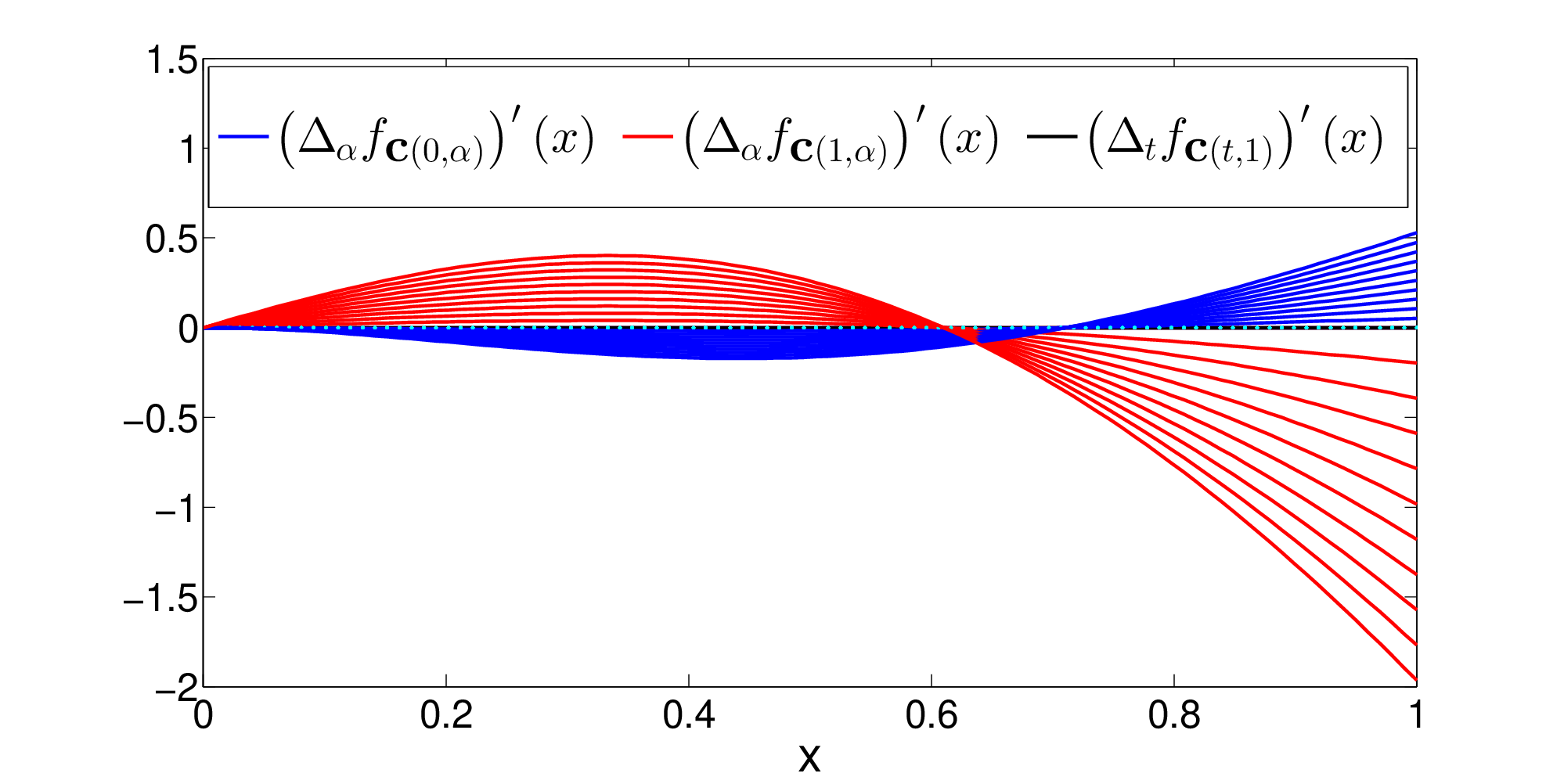}\label{fig1a}}
{\includegraphics[width=7.7cm,height=4.8cm]{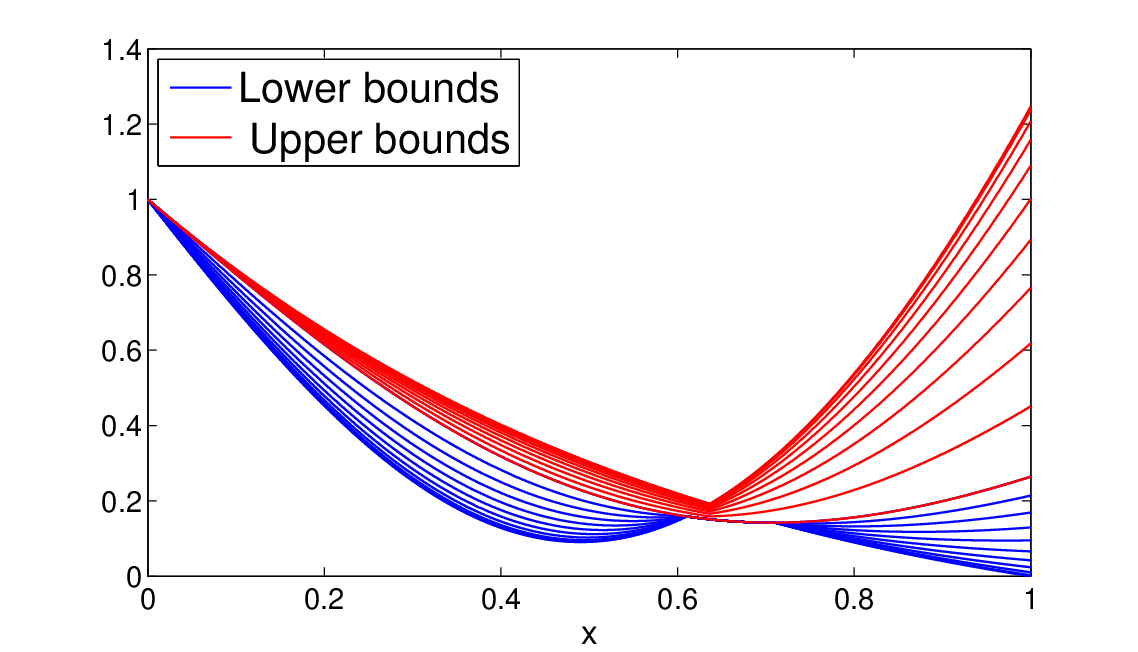}\label{fig11}}
\caption{Example~\ref{exam3}, (a) derivatives of the $\alpha$-level
set, (b) $\alpha$-level set of the $gp$-derivative.} \label{fig5}
\end{figure}
\end{example}


\subsection{Integration of fuzzy valued functions}

The integral of a fuzzy valued function
$F_{\textbf{C}_{\nu}^k}: [a,b] \rightarrow \mathcal{F}(\mathbb{R})$ with
$[F_{\textbf{C}_{\nu}^k}(x)]_{\alpha}
=\{f_{\textbf{c}(\textbf{t}, \alpha)} (x) \mid f_{\textbf{c}(\textbf{t}, \alpha)}
: [a,b] \rightarrow \mathbb{R},
~\textbf{c}(\textbf{t}, \alpha)\in [\textbf{C}_{\nu}^k]_\alpha \}$,
is defined level-wise by
\begin{equation}
\label{eq4}
\left[\int_a^bF_{\textbf{C}_{\nu}^k}(x)dx\right]_{\alpha}
=\left\{\int_a^bf_{\textbf{c}(\textbf{t}, \alpha)} (x)dx \Big{|}
f_{\textbf{c}(\textbf{t}, \alpha)}:[a,b]\rightarrow \mathbb{R}
~~\text{is integrable w.r.t. $x$ for every}~\textbf{c}(\textbf{t}, \alpha)
\in[\textbf{C}_{\nu}^k]_{\alpha} \right \}.
\end{equation}
The same comment of Remark~\ref{remk2} holds for the concept of
definite integral of a fuzzy valued function. To gain a better
understanding of this issue, we give the following example.

\begin{example}
Consider function $g$ defined by $g(x)=1-x$ and
the triangular fuzzy number $A=(0,1,2)$.
Then, based on the parametric representation \eqref{eq3},
\begin{align*}
\left[\int_0^3 A \cdot g(x)dx\right]_{\alpha}
&=\left\{\int_0^3(\alpha+t(2-2\alpha))(1-x)dx \Big{|}~ t\in[0,1] \right\}\\
&=\left\{ -\frac{3}{2}\alpha+t(-3+3\alpha)|~ t\in[0,1] \right\}\\
&=\left[-3+\frac{3}{2}\alpha, -\frac{3}{2}\alpha\right],
\end{align*}
while using the parametric representation \eqref{equ1} we obtain
\begin{align*}
&\left[\int_0^3A \cdot g(x)dx\right]_{\alpha}\\
&=\left\{\int_0^1\alpha(1-x)+t(2-2\alpha)(1-x)dx+\int_1^3(2-\alpha)(1-x)
+t(2\alpha-2)(1-x)dx \Big{|}~ t\in[0,1] \right\}\\
&=\left\{ -4+\frac{5}{2}\alpha+t(5-5\alpha)|~ t\in[0,1] \right\}\\
&=\left[-4+\frac{5}{2}\alpha,1-\frac{5}{2}\alpha\right].
\end{align*}
\end{example}

\begin{proposition}
Let ${F_{\textbf{C}_{\nu}^k}: [a,b]\rightarrow \mathcal{F}(\mathbb{R})}$
be a fuzzy valued function and
$[F_{\textbf{C}_{\nu}^k}(x)]_{\alpha} = \{ f_{\textbf{c}(\textbf{t}, \alpha)}(x)|$\\
$ f_{\textbf{c}(\textbf{t}, \alpha)}: [a,b] \rightarrow
\mathbb{R}, \textbf{c}(\textbf{t}, \alpha) \in [\textbf{C}_{\nu}^k]_{\alpha} \}$.
If $f_{\textbf{c}(\textbf{t}, \alpha)}(x)$ is integrable w.r.t. $x$,
then $F_{\textbf{C}_{\nu}^k}$ is integrable and
\begin{equation*}
\left[\int_a^bF_{\textbf{C}_{\nu}^k}(x)dx\right]_{\alpha}
=\left[\underset{\textbf{t}}{\min}\int_a^bf_{\textbf{c}(\textbf{t}, \alpha)}(x)dx,
\underset{\textbf{t}}{\max}\int_a^bf_{\textbf{c}(\textbf{t}, \alpha)}(x)dx\right].
\end{equation*}
\end{proposition}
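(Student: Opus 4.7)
The plan is to read off the formula directly from definition \eqref{eq4} and then verify that the resulting family of intervals is indeed a bona fide fuzzy number via Lemma~\ref{lem1}. The starting observation is that, by the parametrization \eqref{eq22}, each coordinate $c_i(t_i,\alpha)=(c_i)^-_{\alpha}+t_i((c_i)^+_{\alpha}-(c_i)^-_{\alpha})$ is affine in $t_i$, so $f_{\textbf{c}(\textbf{t},\alpha)}(x)$ is continuous (in fact multiaffine) in $\textbf{t}\in[0,1]^k$ for fixed $x$ and $\alpha$. Since $f_{\textbf{c}(\textbf{t},\alpha)}(x)$ is integrable in $x$ by hypothesis, I would interchange the roles to conclude that the real-valued map $\textbf{t}\mapsto\int_a^b f_{\textbf{c}(\textbf{t},\alpha)}(x)\,dx$ is continuous on the compact, connected cube $[0,1]^k$, and therefore attains its minimum and maximum there.

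Next, because the image of a continuous function on a connected compact set is a closed bounded interval of $\mathbb{R}$, applying this to \eqref{eq4} immediately gives
\begin{equation*}
\left[\int_a^b F_{\textbf{C}_{\nu}^k}(x)\,dx\right]_{\alpha}
=\left[\underset{\textbf{t}}{\min}\int_a^b f_{\textbf{c}(\textbf{t},\alpha)}(x)\,dx,\;
\underset{\textbf{t}}{\max}\int_a^b f_{\textbf{c}(\textbf{t},\alpha)}(x)\,dx\right],
\end{equation*}
which is precisely the claimed formula. It only remains to prove that this family, indexed by $\alpha\in[0,1]$, is the $\alpha$-level family of some fuzzy number, i.e., that the integral is itself in $\mathcal{F}(\mathbb{R})$.

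For the latter, I would verify the three conditions of Lemma~\ref{lem1} for the endpoint functions $\ell(\alpha):=\min_{\textbf{t}}\int_a^b f_{\textbf{c}(\textbf{t},\alpha)}(x)\,dx$ and $u(\alpha):=\max_{\textbf{t}}\int_a^b f_{\textbf{c}(\textbf{t},\alpha)}(x)\,dx$. Boundedness and the inequality $\ell(1)\leq u(1)$ are immediate from the construction. Monotonicity (nondecreasing $\ell$, nonincreasing $u$) follows from the nesting $[\textbf{C}_{\nu}^k]_{\alpha'}\subseteq[\textbf{C}_{\nu}^k]_{\alpha}$ for $\alpha\leq\alpha'$, which is inherited from the stacking theorem (Theorem~\ref{th22}) applied coordinatewise: as $\alpha$ grows, the set of admissible $\textbf{c}(\textbf{t},\alpha)$ shrinks, so the min can only grow and the max can only shrink. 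Left-continuity in $(0,1]$ and right-continuity at $0$ of $\ell,u$ are inherited from the analogous properties of the $(c_i)^{\pm}_{\alpha}$ provided by Lemma~\ref{lem1} applied to each $C_i$.

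The main obstacle is the continuity step, since the min/max is over $\textbf{t}$ and its variation with $\alpha$ is not a priori transparent. My plan to bypass this cleanly is to exploit multiaffinity: since $f_{\textbf{c}(\textbf{t},\alpha)}(x)$ is affine in each $t_i$, the integral is a multiaffine function of $\textbf{t}$ on $[0,1]^k$, and such functions attain their extrema at the $2^k$ vertices of the cube. Over this finite set of candidate values, each of which is a continuous combination of $(c_i)^-_{\alpha}$ and $(c_i)^+_{\alpha}$, the required one-sided continuity of $\ell$ and $u$ follows from the corresponding one-sided continuity of the $(c_i)^{\pm}_{\alpha}$ together with the elementary fact that a finite minimum (resp.\ maximum) of one-sided continuous functions is one-sided continuous.
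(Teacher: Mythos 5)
Your argument is correct, and it is considerably more substantive than the paper's own proof, which consists of the single sentence ``It follows from \eqref{eq4}.'' The basic move is the same in both cases -- read the endpoint formula off the level-wise definition \eqref{eq4} by noting that $\textbf{t}\mapsto\int_a^b f_{\textbf{c}(\textbf{t},\alpha)}(x)\,dx$ is continuous on the compact connected cube $[0,1]^k$, so its image is exactly the closed interval between its minimum and maximum -- but you go on to verify, via Lemma~\ref{lem1} (equivalently Theorem~\ref{th2}), that the resulting interval family is actually the level family of a fuzzy number, a point the paper silently skips. That extra step is where the real content lies, and your vertex argument is a clean way to handle it: a multiaffine function on $[0,1]^k$ attains its extrema on the $2^k$ vertices, reducing one-sided continuity in $\alpha$ of $\ell$ and $u$ to that of finitely many candidate functions. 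Two small spots deserve one more line each if you write this up. First, to know that the integral is multiaffine in $\textbf{t}$ you need the coefficient functions of the multiaffine expansion of $f_{\textbf{c}(\textbf{t},\alpha)}(\cdot)$ to be integrable in $x$; this follows because they are recovered as finite linear combinations (inclusion--exclusion) of the vertex functions $f_{\textbf{c}(\textbf{t},\alpha)}(\cdot)$, $\textbf{t}\in\{0,1\}^k$, each integrable by hypothesis. Second, the one-sided continuity in $\alpha$ of a vertex value $\alpha\mapsto\int_a^b f_{\textbf{c}(\textbf{t},\alpha)}(x)\,dx$ requires passing a limit in $\alpha$ through the integral sign, so you should invoke dominated convergence (the $\alpha=0$ level furnishes an integrable dominating function by the nesting of the $[\textbf{C}_{\nu}^k]_{\alpha}$). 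With those two remarks your proof is complete and, unlike the paper's, actually establishes that $\int_a^b F_{\textbf{C}_{\nu}^k}(x)\,dx\in\mathcal{F}(\mathbb{R})$.
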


\begin{proof}
It follows from \eqref{eq4}.
\end{proof}

\begin{proposition}
Continuous functions $F: {\cal S}\subseteq\mathbb{R}\rightarrow
\mathcal{F}(\mathbb{R})$ are integrable.
\end{proposition}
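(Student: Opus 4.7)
The plan is to reduce the claim to the previous proposition by exhibiting, for each continuous $F$, a parametric family of real-valued functions that are Riemann integrable in $x$, and then to verify that the resulting $\alpha$-level description defines a genuine fuzzy number via Lemma~\ref{lem1}. Throughout I assume the integration is carried out on a compact subinterval $[a,b]\subseteq\mathcal{S}$, which is the standard setting in which \eqref{eq4} is invoked.

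First, I would use Remark~\ref{remk1}: since $F$ is continuous, the endpoint functions $f^-_\alpha(x)$ and $f^+_\alpha(x)$ are continuous in $x$ for each fixed $\alpha\in[0,1]$. Consequently the parametric representative
\begin{equation*}
f_{(t,\alpha)}(x)=f^-_\alpha(x)+t\bigl(f^+_\alpha(x)-f^-_\alpha(x)\bigr)
\end{equation*}
is continuous in $x$ for every $(t,\alpha)\in[0,1]^2$, and so Riemann integrable on $[a,b]$. Applying the preceding proposition (integrability of the parametric slices implies integrability of $F$) would then give
\begin{equation*}
\left[\int_a^b F(x)\,dx\right]_\alpha
=\left[\min_{t\in[0,1]}\int_a^b f_{(t,\alpha)}(x)\,dx,\;\max_{t\in[0,1]}\int_a^b f_{(t,\alpha)}(x)\,dx\right],
\end{equation*}
and, by linearity of the integral in $t$, this simplifies to $\bigl[\int_a^b f^-_\alpha(x)\,dx,\;\int_a^b f^+_\alpha(x)\,dx\bigr]$.

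The second step, and the only real content, is to verify that the $\alpha$-indexed family just obtained really is the $\alpha$-level family of some $A\in\mathcal{F}(\mathbb{R})$. For this I would check the three conditions of Lemma~\ref{lem1} applied to $\alpha\mapsto\int_a^b f^-_\alpha(x)\,dx$ and $\alpha\mapsto\int_a^b f^+_\alpha(x)\,dx$. Boundedness is immediate from the joint boundedness of $f^\pm_\alpha$ on the compact set $[a,b]\times[0,1]$ (itself a consequence of continuity of $F$ plus compactness of its graph of $\alpha$-cuts). Monotonicity in $\alpha$ transfers directly from the pointwise monotonicity of $f^-_\alpha(x)$ (non-decreasing) and $f^+_\alpha(x)$ (non-increasing) guaranteed by Lemma~\ref{lem1} applied to each $F(x)$, since integration preserves pointwise inequalities. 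The inequality $\int_a^b f^-_1(x)\,dx\le\int_a^b f^+_1(x)\,dx$ is also immediate.

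The step I expect to require the most care is the left-continuity on $]0,1]$ and right-continuity at $0$ of the integrated endpoint functions. Here I would argue as follows: for any sequence $\alpha_i\uparrow\alpha$, the pointwise left-continuity from Lemma~\ref{lem1} gives $f^-_{\alpha_i}(x)\to f^-_\alpha(x)$ and $f^+_{\alpha_i}(x)\to f^+_\alpha(x)$ for each $x\in[a,b]$, while the uniform bound from the previous paragraph supplies an integrable majorant. Dominated convergence then yields the desired left-continuity of the integrated functions, and an analogous argument with $\alpha_i\downarrow 0$ handles right-continuity at $0$. With all hypotheses of Lemma~\ref{lem1} verified, there exists a fuzzy number whose $\alpha$-level set equals $\bigl[\int_a^b f^-_\alpha(x)\,dx,\int_a^b f^+_\alpha(x)\,dx\bigr]$, so $\int_a^b F(x)\,dx$ is well defined in $\mathcal{F}(\mathbb{R})$ and $F$ is integrable.
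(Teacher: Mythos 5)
Your proof is correct and follows essentially the same route as the paper's: the paper's entire proof is ``Follows immediately from Proposition~\ref{prop4},'' i.e., continuity of $F$ forces continuity (hence Riemann integrability) of every parametric slice, and the preceding proposition then gives integrability of $F$; your use of Remark~\ref{remk1} in place of Proposition~\ref{prop4} is an inessential variant of the same idea. The additional verification via Lemma~\ref{lem1} that the integrated endpoint functions $\alpha\mapsto\int_a^b f^{\pm}_{\alpha}(x)\,dx$ genuinely determine a fuzzy number is extra rigor that the paper leaves implicit, and you carry it out correctly.
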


\begin{proof}
Follows immediately from Proposition~\ref{prop4}.
\end{proof}

The integral satisfies the following properties.

\begin{proposition}
\label{prop41}
Let $F(x)$ and $G(x)$ be two integrable fuzzy valued functions. Then,\\
$(1)~\int_{\beta}^{\gamma}F(x)~dx=\int_{\beta}^{\lambda}F(x)~dx
+\int_{\lambda}^{\gamma}F(x)~dx,~~~~\beta \leq \lambda \leq \gamma$.\\
$(2) \int_{\beta}^{\gamma} (a \cdot F(x)+b\cdot G(x))~dx
=a\cdot \int_{\beta}^{\gamma} F(x)~dx
+b\cdot \int_{\beta}^{\gamma} G(x)~dx,
~~~~a,b\in \mathbb{R}$.
\end{proposition}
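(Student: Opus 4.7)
The plan is to reduce both identities to their classical (real-valued) analogues by passing to the parametric $\alpha$-level representation given by \eqref{eq4}, and then to check the equality of the resulting sets of real functions.

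For part (1), I would start by writing
$$
\left[\int_\beta^\gamma F(x)\,dx\right]_\alpha
=\left\{\int_\beta^\gamma f_{\mathbf c(\mathbf t,\alpha)}(x)\,dx
\,\Big|\,f_{\mathbf c(\mathbf t,\alpha)}:[\beta,\gamma]\to\mathbb R
\text{ integrable},\ \mathbf c(\mathbf t,\alpha)\in[\mathbf C_\nu^k]_\alpha\right\}
$$
using \eqref{eq4}. Since each $f_{\mathbf c(\mathbf t,\alpha)}$ is a real integrable function, the standard additivity of the Riemann integral gives $\int_\beta^\gamma f_{\mathbf c(\mathbf t,\alpha)}(x)\,dx=\int_\beta^\lambda f_{\mathbf c(\mathbf t,\alpha)}(x)\,dx+\int_\lambda^\gamma f_{\mathbf c(\mathbf t,\alpha)}(x)\,dx$. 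Combining this with Definition~\ref{def2} for the sum of two fuzzy numbers (applied level-wise) yields the desired equality of $\alpha$-level sets, and Definition~\ref{def1} then promotes it to an equality of fuzzy numbers.

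For part (2), I would first handle the scalar multiplication and the fuzzy sum separately. Using Remark~\ref{rem11} and \eqref{eqqq2}, the $\alpha$-level set of $a\cdot F(x)$ is $\{a\,f_{\mathbf c(\mathbf t,\alpha)}(x)\mid \mathbf t\in[0,1]^k\}$, and similarly for $b\cdot G(x)$ with an independent parameter vector $\mathbf s\in[0,1]^m$. By \eqref{eqqq1},
$$
[a\cdot F(x)+b\cdot G(x)]_\alpha
=\left\{a\,f_{\mathbf c(\mathbf t,\alpha)}(x)+b\,g_{\mathbf d(\mathbf s,\alpha)}(x)
\,\big|\,\mathbf t\in[0,1]^k,\ \mathbf s\in[0,1]^m\right\},
$$
so \eqref{eq4} applied to the integrand $aF+bG$ together with linearity of the classical integral produces
$$
\left[\int_\beta^\gamma (a\cdot F+b\cdot G)\,dx\right]_\alpha
=\left\{a\int_\beta^\gamma f_{\mathbf c(\mathbf t,\alpha)}\,dx
+b\int_\beta^\gamma g_{\mathbf d(\mathbf s,\alpha)}\,dx
\,\Big|\,\mathbf t,\mathbf s\right\}.
$$
The right-hand side is exactly the $\alpha$-level set of $a\cdot\int_\beta^\gamma F(x)\,dx+b\cdot\int_\beta^\gamma G(x)\,dx$ by a second application of \eqref{eqqq1}--\eqref{eqqq2}, and invoking Definition~\ref{def1} completes the proof.

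The main subtlety, and the only point that needs care, is precisely the use of independent parameter vectors $\mathbf t$ and $\mathbf s$ in part~(2): one must not collapse them to a common parameter, because the parametric arithmetic of Definition~\ref{def2} treats $F$ and $G$ as independent fuzzy operands even when they arise from the same integration process. Once this is respected, linearity of the classical integral pushes through level-wise and no further analytical difficulty remains.
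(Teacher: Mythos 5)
The paper states Proposition~\ref{prop41} without any proof, so there is no argument of the authors to compare yours against; judged on its own merits, your treatment of part~(2) is sound, but part~(1) contains a genuine gap. After splitting the classical integral, the left-hand side of~(1) becomes the set $\{\int_\beta^\lambda f_{\mathbf{c}(\mathbf{t},\alpha)}\,dx+\int_\lambda^\gamma f_{\mathbf{c}(\mathbf{t},\alpha)}\,dx \mid \mathbf{t}\in[0,1]^k\}$, in which the two summands share the \emph{same} parameter $\mathbf{t}$; the right-hand side, formed with Definition~\ref{def2}, is $\{\int_\beta^\lambda f_{\mathbf{c}(\mathbf{t}_1,\alpha)}\,dx+\int_\lambda^\gamma f_{\mathbf{c}(\mathbf{t}_2,\alpha)}\,dx \mid \mathbf{t}_1,\mathbf{t}_2\in[0,1]^k\}$, with \emph{independent} parameters. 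The first set is contained in the second, and the inclusion can be strict: this is exactly the $A+A\neq 2A$ phenomenon you correctly insist on in part~(2), except that here it works against you. Concretely, take $F(x)=(0,1,2)\cdot x$ on $[-1,1]$ with $\lambda=0$ under representation \eqref{eq3}: since $\int_{-1}^{1}(\alpha+t(2-2\alpha))x\,dx=0$ for every $t$, the integral $\int_{-1}^{1}F(x)\,dx$ is the crisp number $0$, whereas $\int_{-1}^{0}F(x)\,dx+\int_{0}^{1}F(x)\,dx$ has $\alpha$-level $[-1+\alpha,\,1-\alpha]$. So under the \eqref{eq3}-reading that your proof adopts, identity~(1) is not merely left unproved by your argument --- it actually fails, and the sentence ``combining this with Definition~\ref{def2} \ldots\ yields the desired equality of $\alpha$-level sets'' is precisely the step that cannot be carried out.

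The statement of part~(1) can be rescued, and a correct proof written, by working with the representation \eqref{equ1} instead: there one has $[\int_\beta^\gamma F(x)\,dx]_\alpha=\{\int_\beta^\gamma f^-_\alpha\,dx+t\int_\beta^\gamma(f^+_\alpha-f^-_\alpha)\,dx \mid t\in[0,1]\}=[\int_\beta^\gamma f^-_\alpha\,dx,\ \int_\beta^\gamma f^+_\alpha\,dx]$, because $f^+_\alpha\geq f^-_\alpha$ pointwise, and then additivity of the classical integral applied separately to the two endpoint functions $f^-_\alpha$ and $f^+_\alpha$ reproduces exactly the Minkowski sum of the two subinterval integrals, which is what Definition~\ref{def2} prescribes. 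Alternatively, one must interpret the ``$+$'' in~(1) as a shared-parameter (SLCIA-type) sum. Either way, you should state which representation of the integral is in force; your part~(2), which never needs to add two integrals of the \emph{same} fuzzy function over adjacent intervals, goes through as written for both representations.
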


\begin{proposition}
\label{prop6}
Let $F_{\textbf{C}_{\nu}^k}:[a,b] \rightarrow \mathcal{F}(\mathbb{R})$
be continuous. Then,
\begin{description}
\item[$(1)$] function $G_{\textbf{D}_{\nu}^n}(x)=\int_a^x F_{\textbf{C}_{\nu}^k}(u)~du$
is $p$-differentiable and $(G_{\textbf{D}_{\nu}^n})'_p(x)=F_{\textbf{C}_{\nu}^k}(x)$;
\item[$(2)$] function $H_{\textbf{E}_{\nu}^m}(x)=\int_x^b F_{\textbf{C}_{\nu}^k}(u)~du$
is $p$-differentiable and $(H_{\textbf{E}_{\nu}^m})'_p(x)=-F_{\textbf{C}_{\nu}^k}(x)$.
\end{description}
\end{proposition}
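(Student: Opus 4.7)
The plan is to reduce both statements to Proposition~\ref{prop2} via the level-set representation, exploiting that once $F_{\textbf{C}_{\nu}^k}$ is continuous, the auxiliary scalar functions $f_{\textbf{c}(\textbf{t},\alpha)}$ satisfy the hypotheses of the classical fundamental theorem of calculus uniformly in the parameters. More precisely, for part~(1) I would first use definition \eqref{eq4} of the fuzzy integral to write
\[
[G_{\textbf{D}_{\nu}^n}(x)]_\alpha=\left\{g_{\textbf{c}(\textbf{t},\alpha)}(x):=\int_a^x f_{\textbf{c}(\textbf{t},\alpha)}(u)\,du \,\Big|\, \textbf{c}(\textbf{t},\alpha)\in[\textbf{C}_{\nu}^k]_\alpha\right\},
\]
which, by the previous proposition, is indeed the $\alpha$-level set of a fuzzy number since $F_{\textbf{C}_{\nu}^k}$ is continuous and hence integrable.

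Next I would invoke continuity of $f_{\textbf{c}(\textbf{t},\alpha)}$ in $x$ (which follows from continuity of $F_{\textbf{C}_{\nu}^k}$ via Proposition~\ref{prop4}) so that the classical fundamental theorem of calculus gives, pointwise in $(\textbf{t},\alpha)$,
\[
g'_{\textbf{c}(\textbf{t},\alpha)}(x)=f_{\textbf{c}(\textbf{t},\alpha)}(x).
\]
To apply Proposition~\ref{prop2} I must verify that for each $h\neq 0$ small, the family $\{g_{\textbf{c}(\textbf{t},\alpha)}(x+h)-g_{\textbf{c}(\textbf{t},\alpha)}(x)\}_{\textbf{t},\alpha}$ fulfils the stacking theorem. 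Rewriting this difference as $\int_x^{x+h}f_{\textbf{c}(\textbf{t},\alpha)}(u)\,du$, it is, up to the constant multiplier $1$, exactly the $\alpha$-level set of the fuzzy number $\int_x^{x+h}F_{\textbf{C}_{\nu}^k}(u)\,du$, whose existence is guaranteed by continuity of $F_{\textbf{C}_{\nu}^k}$. Therefore Proposition~\ref{prop2} applies and yields
\[
[(G_{\textbf{D}_{\nu}^n})'_p(x)]_\alpha=\left\{f_{\textbf{c}(\textbf{t},\alpha)}(x)\,\Big|\,\textbf{c}(\textbf{t},\alpha)\in[\textbf{C}_{\nu}^k]_\alpha\right\}=[F_{\textbf{C}_{\nu}^k}(x)]_\alpha,
\]
for all $\alpha\in[0,1]$, so that $(G_{\textbf{D}_{\nu}^n})'_p(x)=F_{\textbf{C}_{\nu}^k}(x)$ by Definition~\ref{def1}.

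For part~(2) I would argue symmetrically, using $H_{\textbf{E}_{\nu}^m}(x)=\int_x^b F_{\textbf{C}_{\nu}^k}(u)\,du$ and the identity $\int_x^b f_{\textbf{c}(\textbf{t},\alpha)}(u)\,du=\int_a^b f_{\textbf{c}(\textbf{t},\alpha)}(u)\,du-\int_a^x f_{\textbf{c}(\textbf{t},\alpha)}(u)\,du$, so that the classical derivative with respect to $x$ equals $-f_{\textbf{c}(\textbf{t},\alpha)}(x)$. The scalar multiplication by $-1$ passes through to the fuzzy setting by Definition~\ref{def2} (and Remark~\ref{rem11} ensures consistency of the parametric representation), giving $(H_{\textbf{E}_{\nu}^m})'_p(x)=-F_{\textbf{C}_{\nu}^k}(x)$.

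The main obstacle I expect is the verification that the difference quotient level sets produce a genuine fuzzy number, i.e., satisfy the stacking theorem, as this is the hypothesis of Proposition~\ref{prop2} that is not automatic. All other steps are level-wise applications of the scalar fundamental theorem of calculus; continuity of $F_{\textbf{C}_{\nu}^k}$ (equivalently, continuity of each $f_{\textbf{c}(\textbf{t},\alpha)}$ uniformly in the parameters) is exactly what makes both the stacking condition and the passage to the limit $h\to 0$ uniform and hence safe.
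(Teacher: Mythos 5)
Your proposal is correct and follows essentially the same route as the paper's proof: both reduce the statement to Proposition~\ref{prop2} by writing the level set of $G_{\textbf{D}_{\nu}^n}$ as the family $\int_a^x f_{\textbf{c}(\textbf{t},\alpha)}(u)\,du$, invoke Proposition~\ref{prop4} for continuity of the $f_{\textbf{c}(\textbf{t},\alpha)}$, apply the classical fundamental theorem of calculus pointwise in $(\textbf{t},\alpha)$, and check that the stacking property passes to the integrals before concluding; part~(2) is handled symmetrically in both. If anything, your verification of the stacking hypothesis of Proposition~\ref{prop2} --- identifying $g_{\textbf{c}(\textbf{t},\alpha)}(x+h)-g_{\textbf{c}(\textbf{t},\alpha)}(x)$ with the level set of the fuzzy number $\int_x^{x+h}F_{\textbf{C}_{\nu}^k}(u)\,du$ --- is slightly more explicit than the paper's one-line observation.
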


\begin{proof}
Suppose that $[F_{\textbf{C}_{\nu}^k}(x)]_{\alpha} = \{ f_{\textbf{c}(\textbf{t}, \alpha)}(x)|
f_{\textbf{c}(\textbf{t}, \alpha)}: [a,b] \rightarrow \mathbb{R}, \textbf{c}(\textbf{t}, \alpha)
\in [\textbf{C}_{\nu}^k]_{\alpha} \}$. Then,
\begin{equation*}
[G_{\textbf{D}_{\nu}^n}(x)]_{\alpha}=\left[\int_a^x F_{\textbf{C}_{\nu}^k}(u)du\right]_{\alpha}
=\left\{ \int_a^x f_{\textbf{c}(\textbf{t},\alpha)}(u)du \Big{|} f_{\textbf{c}(\textbf{t}, \alpha)}
: [a,b] \rightarrow \mathbb{R}, \textbf{c}(\textbf{t}, \alpha) \in [\textbf{C}_{\nu}^k]_{\alpha}\right\}.
\end{equation*}
From Proposition~\ref{prop4}, $f_{\textbf{c}(\textbf{t},\alpha)}$ is continuous. Then,
$\int_a^xf_{\textbf{c}(\textbf{t},\alpha)}(u)du$ is differentiable for each
$\textbf{t}\in[0,1]^k$ and $\alpha\in[0,1]$. Additionally, let us observe that if
$f_{\textbf{c}(\textbf{t},\alpha)}$ satisfies the stacking theorem, then
function $\int_a^xf_{\textbf{c}(\textbf{t},\alpha)}(u)du$ fulfills the same property.
Therefore, using Proposition~\ref{prop2}, $G_{\textbf{D}_{\nu}^n}(x)$ is $p$-differentiable and
\begin{align*}
[(G_{\textbf{D}_{\nu}^n})'_p(x)]_{\alpha}
&=\left\{\left(\int_a^x f_{\textbf{c}(\textbf{t},\alpha)}(u)du\right)' \Big{|}
f_{\textbf{c}(\textbf{t}, \alpha)}: [a,b] \rightarrow \mathbb{R},
\textbf{c}(\textbf{t}, \alpha) \in [\textbf{C}_{\nu}^k]_{\alpha}\right\}\\
&=\left\{ f_{\textbf{c}(\textbf{t},\alpha)}(x) | f_{\textbf{c}(\textbf{t}, \alpha)} :
[a,b] \rightarrow \mathbb{R}, \textbf{c}(\textbf{t}, \alpha)
\in [\textbf{C}_{\nu}^k]_{\alpha}\right\}\\
&=[F_{\textbf{C}_{\nu}^k}(x)]_{\alpha}.
\end{align*}
The second part can be proved similarly.
\end{proof}

Furthermore, using Theorem~\ref{th3} and Remark~\ref{remk1},
Proposition~\ref{prop6} holds for the fuzzy valued function $F:
{\cal S}\subseteq\mathbb{R} \rightarrow \mathcal{F}(\mathbb{R})$
with $\alpha$-level set
$\left\{f^-_{\alpha}(x)+t(f^+_{\alpha}(x)-f^-_{\alpha}(x))|
t\in[0,1] \right\}$.

\begin{proposition}
\label{prop5}
If $F$ is $p$-differentiable with no switching point
in the interval $[a,b]$, then
\begin{equation*}
\int_a^b F'_p(x)dx=F(b)\ominus_pF(a).
\end{equation*}
\end{proposition}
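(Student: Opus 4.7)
The plan is to integrate level-wise, exploit the fact that the absence of switching points forces a single differentiability type throughout $[a,b]$, and then match the resulting endpoints with those produced by $\ominus_p$.

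First I would split into two cases according to Definition~\ref{def777}. Since there is no switching point in $[a,b]$, either $F$ is $i$-$p$-differentiable on all of $[a,b]$ or $F$ is $d$-$p$-differentiable on all of $[a,b]$. Write $[F(x)]_{\alpha}=[f^-_{\alpha}(x),f^+_{\alpha}(x)]$. In the $i$-$p$-differentiable case, equation \eqref{eq33} together with the $i$-gH characterization given in the proof of Theorem~\ref{th3} gives $[F'_p(x)]_{\alpha}=[(f^-_{\alpha})'(x),(f^+_{\alpha})'(x)]$, with $(f^-_{\alpha})'(x)\leq(f^+_{\alpha})'(x)$ throughout $[a,b]$. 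Applying the level-wise integral \eqref{eq4} and the ordinary Newton–Leibniz formula componentwise, I obtain
\begin{equation*}
\left[\int_a^b F'_p(x)\,dx\right]_{\alpha}
=\left[f^-_{\alpha}(b)-f^-_{\alpha}(a),\; f^+_{\alpha}(b)-f^+_{\alpha}(a)\right].
\end{equation*}

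Next I would compute $[F(b)\ominus_p F(a)]_{\alpha}$ directly from Definition~\ref{def3}, using the non-decreasing parametric representations of $F(b)$ and $F(a)$. The difference $a(t,\alpha)-b(t,\alpha)$ at $t=0$ gives $f^-_{\alpha}(b)-f^-_{\alpha}(a)$ and at $t=1$ gives $f^+_{\alpha}(b)-f^+_{\alpha}(a)$. Integrating the pointwise inequality $(f^-_{\alpha})'(x)\leq(f^+_{\alpha})'(x)$ over $[a,b]$ shows these are correctly ordered, so (by Remark~\ref{rem2}) the non-decreasing parametric representation of $[F(b)\ominus_p F(a)]_{\alpha}$ indeed has these as endpoints, yielding the same interval as above. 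The equality of fuzzy numbers then follows from Definition~\ref{def1}.

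For the $d$-$p$-differentiable case I would argue symmetrically: \eqref{eq333} yields $[F'_p(x)]_{\alpha}=[(f^+_{\alpha})'(x),(f^-_{\alpha})'(x)]$ with $(f^+_{\alpha})'(x)\leq(f^-_{\alpha})'(x)$ throughout $[a,b]$, and the level-wise integration produces $[f^+_{\alpha}(b)-f^+_{\alpha}(a),\,f^-_{\alpha}(b)-f^-_{\alpha}(a)]$; meanwhile the non-increasing parametric representation of $F(b)\ominus_p F(a)$ (again by Remark~\ref{rem2}) delivers the same interval.

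The main obstacle I anticipate is the bookkeeping in the second step: one has to verify that the endpoints produced by integration really do form a valid $\alpha$-level set (i.e., satisfy Lemma~\ref{lem1}) and that they match the $p$-difference in the correct monotonicity regime. This is precisely where the no-switching-point hypothesis is essential, since it guarantees that a single parametric representation (non-decreasing or non-increasing) works uniformly across the interval, so that integration does not mix the two cases and destroy the $\ominus_p$ structure.
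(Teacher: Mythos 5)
Your proposal is correct and follows essentially the same route as the paper's proof: the absence of switching points fixes a single differentiability type on $[a,b]$, level-wise integration with the classical Newton--Leibniz formula is applied, and the result is identified with $F(b)\ominus_p F(a)$. The only difference is presentational --- the paper keeps the parameter $t$ and regroups the parametric expression into the form required by Definition~\ref{def3}, whereas you match interval endpoints and verify their ordering separately, which amounts to the same computation.
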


\begin{proof}
Because there is no switching point, by Definition~\ref{def77}
$F$ is $i$-$p$-differentiable or $d$-$p$-differentiable in the interval $[a,b]$.
Let $F$ be $i$-$p$-differentiable (the proof
for the $d$-$p$-differentiable case is similar). Then,
\begin{align*}
\left[\int_a^bF'_p(x)dx\right]_{\alpha}&=\left\{ \int_a^b\left( (f_{\alpha}^-)'(x)
+t\left((f_{\alpha}^+)'(x)-(f_{\alpha}^-)'(x)\right) \right)dx \Big{|} t\in[0,1] \right\}\\
&=\left\{ f_{\alpha}^-(b)-f_{\alpha}^-(a)+t\left(f_{\alpha}^+(b)-f_{\alpha}^+(a)
-f_{\alpha}^-(b)+f_{\alpha}^-(a)\right) \big{|} t\in[0,1] \right\}\\
&=\left\{ f_{\alpha}^-(b)+t\left(f_{\alpha}^+(b)-f_{\alpha}^-(b)\right)
-\left( f_{\alpha}^-(a)+t\left(f_{\alpha}^+(a)
-f_{\alpha}^-(a)\right) \right) \big{|} t\in[0,1] \right\}\\
&=\left[ F(b)\ominus_pF(a)\right]_{\alpha}.
\end{align*}
Thus, the proof is complete.
\end{proof}

\begin{theorem}
Let $F$ with $[F(x)]_{\alpha}=\{ f^-_{\alpha}(x)+t(f^+_{\alpha}(x)
-f^-_{\alpha}(x)) | t\in[0,1]\}$ be $p$-differentiable with $n$ switching
points at $d_i,~i=1,2,\cdots, n,~a=d_0<d_1<d_2<\cdots <d_n<d_{n+1}=b$
and exactly at these points. Then,
\begin{equation*}
F(b)\ominus_pF(a)=\sum_{i=1}^{n}\left[ \int_{d_{i-1}}^{d_i}F'_p(x)dx
\ominus_p (-1)\int_{d_i}^{d_{i+1}}F'_p(x)dx \right].
\end{equation*}
Moreover,
\begin{equation*}
\int_a^bF'_p(x)dx=\sum_{i=1}^{n+1}\left( F(d_i)\ominus_pF(d_{i-1}) \right)
\end{equation*}
and if $F(d_i)$ is crisp for $i=1,2,\cdots,n$, then $\int_a^bF'_p(x)dx=F(b)-F(a)$.
\end{theorem}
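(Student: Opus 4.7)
The strategy is to apply Proposition~\ref{prop5} on each maximal subinterval between switching points and then assemble the resulting pieces in two different ways to obtain the three claimed identities. Since the switching points are exactly $d_1,\ldots,d_n$, each subinterval $[d_{i-1},d_i]$ for $i=1,\ldots,n+1$ contains no switching point, so Proposition~\ref{prop5} applies and yields
\begin{equation*}
\int_{d_{i-1}}^{d_i} F'_p(x)\,dx = F(d_i)\ominus_p F(d_{i-1}), \qquad i=1,\ldots,n+1.
\end{equation*}
The second identity then follows immediately: by the additivity of the fuzzy integral (Proposition~\ref{prop41}(1)), $\int_a^b F'_p(x)\,dx=\sum_{i=1}^{n+1}\int_{d_{i-1}}^{d_i}F'_p(x)\,dx$, and substituting gives the claim.

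For the first identity, I would argue at the level of parametric representations. On each $[d_{i-1},d_i]$, Definition~\ref{def77} forces $F$ to be either $i$-$p$- or $d$-$p$-differentiable throughout, and the type alternates across each switching point $d_i$. Using Definition~\ref{def777}, I would write the non-decreasing parametric forms of $\int_{d_{i-1}}^{d_i}F'_p\,dx$ and of $(-1)\int_{d_i}^{d_{i+1}}F'_p\,dx$; the sign flip in the second factor converts its representation from non-decreasing to non-increasing (or vice versa), so that—after the change of differentiability type at $d_i$—both operands match orientation. Applying $\ominus_p$ from Definition~\ref{def3} to each pair causes the endpoint values at the switching point $d_i$ to cancel, so that each bracketed term collapses to a net change of $F$ across the merged interval under a consistent orientation. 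Summing all $n$ such terms in fuzzy addition, the interior contributions telescope out and the total reduces to $F(b)\ominus_p F(a)$.

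For the third identity, if every $F(d_i)$ is crisp for $i=1,\ldots,n$, then by Remark~\ref{rem22} each term in the second identity reduces to ordinary subtraction: $F(d_i)\ominus_p F(d_{i-1})=F(d_i)-F(d_{i-1})$. The resulting sum $\sum_{i=1}^{n+1}(F(d_i)-F(d_{i-1}))$ telescopes in fuzzy arithmetic, because the intermediate values $F(d_1),\ldots,F(d_n)$ are real numbers and hence commute and cancel pairwise under the Minkowski sum, leaving the clean answer $F(b)-F(a)$.

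The main obstacle is the first identity. The second is an essentially routine additivity argument and the third is a direct telescoping, but the first requires carefully tracking the non-decreasing versus non-increasing parametric orientations across each switching point and verifying that the combination $\ominus_p(-1)\int$ performs exactly the orientation correction needed so that the pairwise contributions combine correctly. A further subtlety is ensuring that every quantity on the right-hand side is genuinely a fuzzy number, which amounts to checking the monotonicity conditions of Lemma~\ref{lem1} (equivalently, Proposition~\ref{prop01}) hold for each $p$-difference that appears along the way.
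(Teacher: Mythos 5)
Your proposal follows the paper's proof essentially verbatim: Proposition~\ref{prop5} applied on each switching-point-free subinterval, the explicit parametric cancellation $(F(d)\ominus_pF(a))\ominus_p\left(F(d)\ominus_pF(b)\right)=F(b)\ominus_pF(a)$ for the first identity, integral additivity (Proposition~\ref{prop41}) for the second, and Remark~\ref{rem22} for the crisp-valued telescoping in the third. The one caveat is your appeal to ``telescoping in fuzzy addition'' to assemble the first identity for $n\ge 2$, which is not literally valid since fuzzy addition admits no cancellation --- but the paper is no more careful here, as it proves only the single-switching-point case and asserts that the general case ``follows easily.''
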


\begin{proof}
Consider one switching point only. The case of a finite
number of switching points follows easily.
Let $F$ be $i$-$p$-differentiable on $[a,d]$
and $d$-$p$-differentiable on $[d,b]$. Then, by Proposition~\ref{prop5},
\begin{align*}
\int_a^dF'_p(x)dx\ominus_p(-1)\int_d^bF'_p(x)dx&=(F(d)\ominus_pF(a))
\ominus_p(-1)(F(b)\ominus_pF(d))\\
&=(F(d)\ominus_pF(a))\ominus_p(F(d)\ominus_pF(b))\\
&=F(b)\ominus_pF(a).
\end{align*}
The last equality follows from
\begin{align*}
\Big{[}(F(d)\ominus_pF(a))
&\ominus_p(F(d)\ominus_pF(b))\Big{]}_{\alpha}
=\{ \left( f^-_{\alpha}(d)+t(f^+_{\alpha}(d)-f^-_{\alpha}(d))
-f^-_{\alpha}(a)-t(f^+_{\alpha}(a)-f^-_{\alpha}(a)) \right)\\
&\quad -\left( f^-_{\alpha}(d)+t(f^+_{\alpha}(d)-f^-_{\alpha}(d))
-f^-_{\alpha}(b)-t(f^+_{\alpha}(b)-f^-_{\alpha}(b))\right) | t\in[0,1] \}\\
&\quad=\{ f^-_{\alpha}(b)-f^-_{\alpha}(a)+t(f^+_{\alpha}(b)
-f^-_{\alpha}(b)-f^+_{\alpha}(a)+f^-_{\alpha}(d)) | t\in[0,1] \}\\
&\quad=\{ (f^-_{\alpha}(b)+t(f^+_{\alpha}(b)-f^-_{\alpha}(b)))
-(f^-_{\alpha}(a)+t(f^+_{\alpha}(a)-f^-_{\alpha}(a))) | t\in[0,1] \}\\
&\quad=\Big{[}F(b)\ominus_pF(a)\Big{]}_{\alpha}.
\end{align*}
Then, by Propositions~\ref{prop41} and \ref{prop5},
\begin{align*}
\int_a^bF'_p(x)dx&=\int_a^dF'_p(x)dx+\int_d^bF'_p(x)dx\\
&=(F(d)\ominus_pF(a))+(F(b)\ominus_pF(d)).
\end{align*}
If the values $F(d_i)$ are crisp for all switching points $d_i$, $i=1,2,\ldots,n$,
then from Remark~\ref{rem22} it follows that
\begin{align*}
\int_a^bF'_p(x)dx&=\sum_{i=1}^{n+1}\left( F(d_i)\ominus_pF(d_{i-1}) \right)\\
&=\left( F(b)-F(d_n) \right) + \left( F(d_n)-F(d_{n-1}) \right)+\cdots\\
&\quad+\left( F(d_2)-F(d_1) \right)+\left( F(d_1)-F(a) \right)\\
&=F(b)-F(a).
\end{align*}
The proof is complete.
\end{proof}


\section{Application to fuzzy differential equations}
\label{sec:04}

In this section, the following fuzzy differential equation is considered:
\begin{equation}
\label{eq5}
Y'_{\textbf{D}_{\nu}^n}=F_{\textbf{C}_{\nu}^k}(x,Y_{\textbf{D}_{\nu}^n}),
~~Y_{\textbf{D}_{\nu}^n}(x_0)=A,
\end{equation}
where $F_{\textbf{C}_{\nu}^k}: [a,b]\times \mathcal{F}(\mathbb{R})
\rightarrow \mathcal{F}(\mathbb{R})$ is a given fuzzy valued function
and $A$ is a fuzzy number. Remark~\ref{rem1} gives a useful scheme
to solve the fuzzy initial value problem \eqref{eq5}. Let
\begin{align*}
[Y_{\textbf{D}_{\nu}^n}(x)]_{\alpha}&=\{ y_{\textbf{d}(\textbf{t},\alpha)}(x)|
y_{\textbf{d}(\textbf{t},\alpha)}:[a,b]\rightarrow\mathbb{R};
\textbf{d}(\textbf{t},\alpha)\in[\textbf{D}_{\nu}^n]_{\alpha} \},\\
[F_{\textbf{C}_{\nu}^k}(x,Y_{\textbf{D}_{\nu}^n}(x))]_{\alpha}
&=\left\{ f_{\textbf{c}(\textbf{t}',\alpha)}(x,y_{\textbf{d}(\textbf{t},\alpha)}(x))|
f_{\textbf{c}(\textbf{t}',\alpha)}: [a,b]\times \mathbb{R}\rightarrow\mathbb{R};
\textbf{c}(\textbf{t}',\alpha)\in[\textbf{C}_{\nu}^k]_{\alpha}\right\},
\end{align*}
and
\begin{equation*}
[A]_{\alpha}=\left\{ a(t'',\alpha)| a(t'',\alpha)
=a^-_{\alpha}+t''(a^+_{\alpha}-a^-_{\alpha}); t''\in[0,1] \right\}.
\end{equation*}
From Definition~\ref{def1}, the differential equation \eqref{eq5} can be considered as
\begin{align*}
&\{ y'_{\textbf{d}(\textbf{t},\alpha)}(x)|
y'_{\textbf{d}(\textbf{t},\alpha)}:[a,b]\rightarrow\mathbb{R};
\textbf{d}(\textbf{t},\alpha)\in[\textbf{D}_{\nu}^n]_{\alpha} \}\\
&\hspace{4cm}= \{ f_{\textbf{c}(\textbf{t}',\alpha)}(x,y_{\textbf{d}(\textbf{t},\alpha)}(x))|
f_{\textbf{c}(\textbf{t}',\alpha)}: [a,b]\times \mathbb{R}\rightarrow\mathbb{R};
\textbf{c}(\textbf{t}',\alpha)\in[\textbf{C}_{\nu}^k]_{\alpha} \},\\
&\{ y_{\textbf{d}(\textbf{t},\alpha)}(x_0)|
y_{\textbf{d}(\textbf{t},\alpha)}:[a,b]\rightarrow\mathbb{R};
\textbf{d}(\textbf{t},\alpha)\in[\textbf{D}_{\nu}^n]_{\alpha} \}\\
&\hspace{4cm}=\{ a(t'',\alpha)| a(t'',\alpha)
=a^-_{\alpha}+t''(a^+_{\alpha}-a^-_{\alpha}); t''\in[0,1] \}.
\end{align*}
Then, by Remark~\ref{rem1}, there exist $\textbf{t}'\in[0,1]^k$ and $t''\in[0,1]$ such that
\begin{equation}
\label{eq6}
y'_{\textbf{d}(\textbf{t},\alpha)}
=f_{\textbf{c}(\textbf{t}',\alpha)}(x,y_{\textbf{d}(\textbf{t},\alpha)}),
~~y_{\textbf{d}(\textbf{t},\alpha)}(x_0)=a(t'',\alpha).
\end{equation}

\begin{theorem}
\label{th4}
Let $f: [x_0,x_0+r_1] \times \overline{B}([a^-_{\alpha},a^+_{\alpha}],r_2)
\times \overline{B}([0,1]^k ,r_3)  \times \overline{B}([0,1] ,r_4)
\rightarrow \mathbb{R}$ be Lipschitz w.r.t. second, third and fourth variables, i.e.,
$$
\exists \ L_1~s.t.~ \|f(x,y,\textbf{t}',\alpha)
-f(x,w,\textbf{t}',\alpha) \| \leq L_1 \|y-w \|,
$$
$$
\exists \ L_2~s.t.~ \|f(x,y,\textbf{t}',\alpha)
-f(x,y,\textbf{s},\alpha) \| \leq L_2 \|\textbf{t}'-\textbf{s} \|,
$$
$$
\exists \ L_3~s.t.~ \|f(x,y,\textbf{t}',\alpha)
-f(x,y,\textbf{t}',\beta) \| \leq L_3 \|\alpha-\beta \|,
$$
respectively. Then, the initial value problem
\begin{equation*}
y'=f(x,y,\textbf{t}',\alpha), ~y(x_0)=a(t'',\alpha)
=a^-_{\alpha}+t''(a^+_{\alpha}-a^-_{\alpha})
\end{equation*}
has a unique solution
$$
y(x,t'', \textbf{t}',\alpha)
=\int_{x_0}^xf(u,y,\textbf{t}',\alpha)du+a(t'',\alpha).
$$
Moreover, if $f$ is continuous in $\textbf{t}'$, then the solution
$y(x,t'', \textbf{t}', \alpha)$ is continuous in $\textbf{t}'$ and $t''$.
\end{theorem}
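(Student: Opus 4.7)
The plan is to apply the classical Picard--Lindelöf strategy to the (crisp, real-valued) initial value problem, treating $\mathbf{t}'$, $t''$ and $\alpha$ as frozen parameters, and then to handle the parameter dependence via a Gronwall-type estimate. First I would recast the ODE as the integral equation
\begin{equation*}
y(x)=a(t'',\alpha)+\int_{x_0}^x f(u,y(u),\mathbf{t}',\alpha)\,du,
\end{equation*}
and introduce the Picard operator $(Ty)(x):=a(t'',\alpha)+\int_{x_0}^x f(u,y(u),\mathbf{t}',\alpha)\,du$ on the Banach space $C([x_0,x_0+r],\mathbb{R})$ with a suitable radius $r\le r_1$ (small enough so that the ball $\overline{B}([a_\alpha^-,a_\alpha^+],r_2)$ is preserved, which is possible because $f$ is bounded on the compact domain of its hypotheses).

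Second, I would verify that $T$ is a contraction by using only the first Lipschitz assumption: for any two candidates $y,w$,
\begin{equation*}
|(Ty)(x)-(Tw)(x)|\le L_1\int_{x_0}^x |y(u)-w(u)|\,du,
\end{equation*}
so that on $[x_0,x_0+r]$ with $L_1 r<1$ (or, more robustly, after passing to the Bielecki norm $\|y\|_\lambda:=\sup_x e^{-\lambda(x-x_0)}|y(x)|$ with $\lambda>L_1$) the operator $T$ becomes a strict contraction. The Banach fixed point theorem then yields the existence and uniqueness of the fixed point $y(x,t'',\mathbf{t}',\alpha)$, which is precisely the integral formula stated in the theorem.

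Third, for the joint continuity in $(t'',\mathbf{t}')$ I would compare two solutions $y_i=y(x,t''_i,\mathbf{t}'_i,\alpha)$, $i=1,2$, subtract the integral equations, apply the triangle inequality, and use the three Lipschitz hypotheses to get
\begin{equation*}
|y_1(x)-y_2(x)|\le |a(t''_1,\alpha)-a(t''_2,\alpha)|+L_2 r_1\|\mathbf{t}'_1-\mathbf{t}'_2\|+L_1\int_{x_0}^x |y_1(u)-y_2(u)|\,du.
\end{equation*}
Since $a(t'',\alpha)$ is linear (hence Lipschitz) in $t''$, Gronwall's inequality gives
\begin{equation*}
|y_1(x)-y_2(x)|\le \bigl(|a_\alpha^+-a_\alpha^-|\,|t''_1-t''_2|+L_2 r_1\|\mathbf{t}'_1-\mathbf{t}'_2\|\bigr)e^{L_1 r_1},
\end{equation*}
which delivers the claimed continuity (in fact Lipschitz dependence) in $\mathbf{t}'$ and $t''$ uniformly in $x$.

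The routine part is the contraction argument; the step that requires the most care is choosing the domain of the Picard operator so that all three Lipschitz/boundedness assumptions can be invoked simultaneously and so that $T$ maps the chosen closed ball into itself. The continuity in $\mathbf{t}'$ in particular relies on the second Lipschitz constant $L_2$ being available uniformly on the full cylinder, which is why the hypothesis is stated globally on $\overline{B}([0,1]^k,r_3)$ rather than only pointwise; once that is in place, the Gronwall estimate above finishes the argument cleanly.
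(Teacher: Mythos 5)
Your proof is correct, but it is genuinely more detailed than what the paper does: the paper's entire proof consists of citing Theorem~9.6 of \cite{Bedeebook} for existence and uniqueness and then asserting that the parameter continuity ``follows immediately'' from the continuity of $f$ and of $y(x_0)$ in $\textbf{t}'$ and $t''$. You instead give a self-contained Picard--Lindel\"{o}f argument (contraction of the Picard operator in a Bielecki norm, using only $L_1$) and then a quantitative Gronwall estimate for the parameter dependence. The Gronwall step is the real added value: it uses $L_2$ and the linearity of $a(\cdot,\alpha)$ in $t''$ to produce the explicit bound
\begin{equation*}
|y_1(x)-y_2(x)|\le \bigl(|a_\alpha^+-a_\alpha^-|\,|t''_1-t''_2|+L_2 r_1\|\textbf{t}'_1-\textbf{t}'_2\|\bigr)e^{L_1 r_1},
\end{equation*}
which gives Lipschitz (not merely continuous) dependence on $(\textbf{t}',t'')$, uniformly in $x$ --- strictly more than the theorem claims, and it also makes the final hypothesis ``if $f$ is continuous in $\textbf{t}'$'' redundant, since the Lipschitz condition in $\textbf{t}'$ is already assumed. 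One small caveat shared by both your argument and the paper's: the invariance of the closed ball under the Picard operator (and indeed the integrability of $u\mapsto f(u,y(u),\textbf{t}',\alpha)$) tacitly requires continuity, or at least boundedness and measurability, of $f$ in its first variable, which the stated hypotheses do not explicitly provide; you should flag that this is being assumed, as the cited Theorem~9.6 does.
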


\begin{proof}
For the first part, see Theorem~9.6 of \cite{Bedeebook}.
The second part follows immediately from the continuity of $f$
and $y(x_0)$ w.r.t. $\textbf{t}'$ and $t''$, respectively.
\end{proof}

\begin{theorem}
\label{th5}
Let $f: [x_0,x_0+r_1] \times \overline{B}([a^-_{\alpha},a^+_{\alpha}],r_2)
\times \overline{B}([0,1]^k ,r_3)  \times \overline{B}([0,1] ,r_4)
\rightarrow \mathbb{R}$. Assume $f$ is Lipschitz w.r.t.
$y$, $\textbf{t}'$ and $\alpha$, i.e., there exists
$L_1$, $L_2$ and $L_3$ such that
$$
\|f_{\textbf{c}(\textbf{t}',\alpha)}(x,y)
-f_{\textbf{c}(\textbf{t}',\alpha)}(x,w) \| \leq L_1 \|y-w \|,
$$
$$
\|f_{\textbf{c}(\textbf{t}',\alpha)}(x,y)
-f_{\textbf{c}(\textbf{s},\alpha)}(x,y) \|
\leq L_2 \|\textbf{t}'-\textbf{s} \|,
$$
$$
\|f_{\textbf{c}(\textbf{t}',\alpha)}(x,y)
-f_{\textbf{c}(\textbf{t}',\beta)}(x,y) \|
\leq L_3 \|\alpha-\beta \|.
$$
Then, the fuzzy initial value problem
\eqref{eq5} has a unique solution.
\end{theorem}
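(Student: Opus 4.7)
The plan is to reduce the fuzzy initial value problem to a family of crisp initial value problems parametrized by $(t'', \textbf{t}', \alpha)$, solve each via Theorem~\ref{th4}, and then reassemble the pointwise solutions into a fuzzy valued function using the characterization theorem (Theorem~\ref{th2}). First I would freeze $t'' \in [0,1]$, $\textbf{t}' \in [0,1]^k$ and $\alpha \in [0,1]$. The Lipschitz hypothesis on $f$ with respect to $y$ (constant $L_1$) together with Theorem~\ref{th4} yields a unique real valued solution
$$
y(x, t'', \textbf{t}', \alpha) = a(t'', \alpha) + \int_{x_0}^{x} f_{\textbf{c}(\textbf{t}', \alpha)}(u, y(u, t'', \textbf{t}', \alpha))\, du,
$$
which, by the second part of Theorem~\ref{th4}, is continuous in $(t'', \textbf{t}')$.

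Next, for each $x \in [x_0, x_0 + r_1]$, I would set
$$
U_\alpha(x) = \{\, y(x, t'', \textbf{t}', \alpha) : t'' \in [0,1],\ \textbf{t}' \in [0,1]^k \,\}
$$
and verify that the family $\{U_\alpha(x)\}_{\alpha \in [0,1]}$ satisfies the four conditions of Theorem~\ref{th2}. Compactness and connectedness of $U_\alpha(x)$, i.e. that it is a nonempty closed interval, follow because it is the continuous image of the compact connected set $[0,1]^{k+1}$ under $(t'', \textbf{t}') \mapsto y(x, t'', \textbf{t}', \alpha)$. For the nesting property $U_\alpha(x) \subseteq U_\beta(x)$ whenever $\beta \leq \alpha$, I would exploit the fact that the initial data satisfy $[A]_\alpha \subseteq [A]_\beta$, so the parametrization $a(t'', \alpha)$ at level $\alpha$ is a subfamily (after reparametrization) of the one at level $\beta$; combined with continuous dependence on both the initial value and the parameter $\textbf{t}'$ (via the Gronwall-type estimate that the Lipschitz constants $L_1, L_2, L_3$ provide), this gives the required inclusion.

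For conditions~(3) and (4) of Theorem~\ref{th2}, namely the left-continuity at $\alpha \in ]0,1]$ and right-continuity at $0$, I would use the Lipschitz bound in $\alpha$ (constant $L_3$) together with the corresponding property of $a(t'', \cdot)$ inherited from $A$ being a fuzzy number (Lemma~\ref{lem1}). A standard Gronwall argument then bounds
$$
\bigl| y(x, t'', \textbf{t}', \alpha_i) - y(x, t'', \textbf{t}', \alpha) \bigr|
\leq C\bigl(|a(t'', \alpha_i) - a(t'', \alpha)| + |\alpha_i - \alpha|\bigr) e^{L_1(x-x_0)}
$$
uniformly in $(t'', \textbf{t}')$, and this transfers the limit behaviour from $[A]_{\alpha_i}$ to $U_{\alpha_i}(x)$. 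By Theorem~\ref{th2}, the family $\{U_\alpha(x)\}$ is the $\alpha$-level family of a unique fuzzy number $Y_{\textbf{D}_\nu^n}(x)$, and $Y'_{\textbf{D}_\nu^n}(x) = F_{\textbf{C}_\nu^k}(x, Y_{\textbf{D}_\nu^n}(x))$ follows by differentiating under the parametric representation and invoking Proposition~\ref{prop2}.

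Uniqueness of the fuzzy solution is then a direct consequence of the pointwise uniqueness delivered by Theorem~\ref{th4}: any other fuzzy solution would induce, through its parametric representation, a family of crisp solutions with the same initial data $a(t'', \alpha)$ and the same right-hand side $f_{\textbf{c}(\textbf{t}', \alpha)}$, which must therefore coincide with $y(x, t'', \textbf{t}', \alpha)$ for every admissible parameter, forcing equality of the corresponding $\alpha$-level sets and hence of the fuzzy functions by Definition~\ref{def1}. I expect the main obstacle to be the verification of conditions~(3)--(4) of the characterization theorem, since this requires transferring the left/right continuity in $\alpha$ from the crisp endpoints of $[A]_\alpha$ through the nonlinear flow; this is exactly where the Lipschitz condition $L_3$ in $\alpha$, which is otherwise unused in Theorem~\ref{th4}, becomes essential.
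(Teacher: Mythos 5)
Your proposal follows essentially the same route as the paper: reduce the fuzzy problem to the parametrized family of crisp initial value problems \eqref{eq6}, apply Theorem~\ref{th4} to each, reassemble the pointwise solution values into a fuzzy number via the characterization theorem (Theorem~\ref{th2}), and inherit uniqueness from the crisp level. You actually supply more detail than the paper, which merely asserts that the verification of the conditions of Theorem~\ref{th2} is easy, whereas you make explicit the Gronwall-type estimates (using $L_1$, $L_2$, $L_3$) needed for the nesting and continuity conditions.
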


\begin{proof}
Since $f_{\textbf{c}(\textbf{t}',\alpha)}(x,y)$  satisfies the hypothesis
of Theorem~\ref{th4}, then the ordinary differential equation
corresponding to \eqref{eq5}, i.e.,
\begin{equation*}
y'_{\textbf{d}(\textbf{t},\alpha)}
=f_{\textbf{c}(\textbf{t}',\alpha)}(x,y_{\textbf{d}(\textbf{t},\alpha)}),
~~y_{\textbf{d}(\textbf{t},\alpha)}(x_0)=a(t'',\alpha),
\end{equation*}
has a unique solution
$$
y_{\textbf{d}(\textbf{t},\alpha)}(x)
=\int_{x_0}^xf_{\textbf{c}(\textbf{t}',\alpha)}(u,y)du+a(t'',\alpha).
$$
In addition, the conditions of Theorem~\ref{th22} hold for
\begin{equation*}
[F_{\textbf{C}_{\nu}^k}(x,Y_{\textbf{D}_{\nu}^n}(x))]_{\alpha}
=\left\{ f_{\textbf{c}(\textbf{t}',\alpha)}(x,y_{\textbf{d}(\textbf{t},\alpha)}(x))|
f_{\textbf{c}(\textbf{t}',\alpha)}: [a,b]\times \mathbb{R}\rightarrow\mathbb{R};
\textbf{c}(\textbf{t}',\alpha)\in[\textbf{C}_{\nu}^k]_{\alpha} \right\}
\end{equation*}
and
\begin{equation*}
[A]_{\alpha}=\left\{ a(t'',\alpha)| a(t'',\alpha)
=a^-_{\alpha}+t''(a^+_{\alpha}-a^-_{\alpha}); t''\in[0,1] \right\}.
\end{equation*}
Then, it is easy to check that
\begin{equation*}
[Y_{\textbf{D}_{\nu}^n}(x)]_{\alpha}
=\left\{ y_{\textbf{d}(\textbf{t},\alpha)}(x)| ~
y_{\textbf{d}(\textbf{t},\alpha)}(x)
=\int_{x_0}^xf_{\textbf{c}(\textbf{t}',\alpha)}(u,y)du
+a(t'',\alpha); \textbf{d}(\textbf{t},\alpha)
\in[\textbf{D}_{\nu}^n]_{\alpha} \right\}
\end{equation*}
is the $\alpha$-level of a fuzzy set according to Theorem~\ref{th2}.
Subsequently, the fuzzy initial value problem \eqref{eq5} has a unique
fuzzy solution $Y_{\textbf{D}_{\nu}^n}(x)$.
\end{proof}

Let $f_{\textbf{c}(\textbf{t}',\alpha)}(x,y_{\textbf{d}(\textbf{t},\alpha)}(x))$ be Lipschitz w.r.t.
$y$, $\textbf{t}'$ and $\alpha$. Because for every fixed $x$ and
$\alpha\in[0,1]$, $f_{\textbf{c}(\textbf{t}',\alpha)}(x,y_{\textbf{d}(\textbf{t},\alpha)}(x))$
and $y_{d(\textbf{t},\alpha)}(x_0)=a(t'',\alpha)$ are continuous functions
in $\textbf{t}'$ and $t''$, respectively, then, by Theorem~\ref{th4}, equation \eqref{eq6}
has a unique solution $y_{\textbf{d}(\textbf{t},\alpha)}(x)=y(x, t'',\textbf{t}',\alpha)$
that is a continuous function in $\textbf{t}'$ and $t''$. Therefore,
$\underset{\textbf{t}',t''}\min~{y(x,t'',\textbf{t}',\alpha)}$ and
$\underset{\textbf{t}',t''}\max~{y(x,t'',\textbf{t}',\alpha)}$ exist with
\begin{equation}
\label{eq15}
[Y_{\textbf{D}^n_{\nu}}(x)]_{\alpha}
=[\underset{\textbf{t}',t''}\min~{y(x,t'',\textbf{t}',\alpha)},
\underset{\textbf{t}',t''}\max~{y(x,t'',\textbf{t}',\alpha)}]
\end{equation}
for each $\textbf{d}(\textbf{t},\alpha)\in[\textbf{D}_{\nu}^n]_{\alpha}$.

Now we show that the fuzzy solution of \eqref{eq5} obtained by our
approach above coincides with the solution obtained by the method
of differential inclusions \cite{Chalco2013,Chalco2007}.

\begin{theorem}
\label{th6}
Let $U$ be an open set in $[0,1]^{k+2}$. Suppose that $f_{\textbf{c}(\textbf{t}',
\alpha)}(x,y_{\textbf{d}(\textbf{t},\alpha)}(x))$ is continuous and, for each
$(t'',\textbf{t}',\alpha)\in U$, there exists a unique solution
$y_{\textbf{d}(\textbf{t},\alpha)}(\cdot)=y(\cdot,t'',\textbf{t}',\alpha)$
of problem \eqref{eq6}, where $y(\cdot,t'',\textbf{t}',\alpha)$ is continuous on $U$.
Then, there exists a fuzzy solution $Y_{\textbf{D}^n_{\nu}}(x)$ of \eqref{eq5} such that
$$
Y_{\textbf{D}^n_{\nu}}(x)=\hat{L}(x,A,C^k_{\nu})
$$
for all $a\leq x\leq b$, where $\hat{L}(x,A,C^k_{\nu})$ is its fuzzy
solution via differential inclusions.
\end{theorem}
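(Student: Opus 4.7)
The plan is to show that the two $\alpha$-level constructions, one coming from our parametric approach and the other from the differential inclusions approach of H\"ullermeier and Chalco-Cano et al., produce exactly the same family of nonempty compact intervals, which by the characterization theorem (Theorem~\ref{th2}) then forces the underlying fuzzy numbers $Y_{\textbf{D}^n_\nu}(x)$ and $\hat{L}(x,A,C^k_\nu)$ to agree.

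First I would recall what the solution $\hat{L}(x,A,C^k_\nu)$ is at the $\alpha$-level: by the differential inclusions approach, $[\hat{L}(x,A,C^k_\nu)]_\alpha$ is the attainable set at $x$ of the crisp ODE $y'=f(x,y,c)$ starting from some $y_0\in[A]_\alpha$ with parameter $c\in[\textbf{C}^k_\nu]_\alpha$. Next I would observe that the parametric representations \eqref{eq22} and the one used for $[A]_\alpha$ in \eqref{eq6} cover these sets exhaustively: as $t''$ runs over $[0,1]$, $a(t'',\alpha)=a^-_\alpha+t''(a^+_\alpha-a^-_\alpha)$ sweeps out all of $[A]_\alpha$, and similarly $\textbf{c}(\textbf{t}',\alpha)$ sweeps out $[\textbf{C}^k_\nu]_\alpha$ as $\textbf{t}'$ ranges over $[0,1]^k$. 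Under the assumed uniqueness, the map $(t'',\textbf{t}')\mapsto y(x,t'',\textbf{t}',\alpha)$ is therefore a surjective parametrization of the attainable set.

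Second, I would invoke the continuity of $y(\cdot,t'',\textbf{t}',\alpha)$ in $(t'',\textbf{t}')$ on the compact box $[0,1]^{k+1}$, which is guaranteed by the hypothesis (and is exactly what Theorem~\ref{th4} delivers when the Lipschitz setting of Theorem~\ref{th5} applies). Compactness and connectedness of $[0,1]^{k+1}$ then imply that
$$
\bigl\{y(x,t'',\textbf{t}',\alpha) : (t'',\textbf{t}')\in[0,1]^{k+1}\bigr\}
=\bigl[\min_{\textbf{t}',t''} y(x,t'',\textbf{t}',\alpha),\ \max_{\textbf{t}',t''} y(x,t'',\textbf{t}',\alpha)\bigr],
$$
which is precisely the interval appearing in \eqref{eq15}. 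Since this interval equals the attainable set $[\hat{L}(x,A,C^k_\nu)]_\alpha$ by the previous step, we obtain $[Y_{\textbf{D}^n_\nu}(x)]_\alpha=[\hat{L}(x,A,C^k_\nu)]_\alpha$ for every $\alpha\in[0,1]$, and the stacking/characterization theorems (Theorems~\ref{th22} and \ref{th2}) yield equality of the two fuzzy numbers, valid for every $x\in[a,b]$.

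The main technical obstacle I expect is verifying carefully that the parametrization is \emph{exhaustive} of the differential inclusions attainable set, i.e., that every trajectory allowed by the inclusion is captured by some $(t'',\textbf{t}')\in[0,1]^{k+1}$. This is where uniqueness of the ODE solution is essential: without it, different parametrizations of the same $(y_0,c)$ could in principle produce distinct trajectories and the correspondence would fail. Given uniqueness, the continuous dependence on $(t'',\textbf{t}',\alpha)$ guaranteed on $U$ lets one pass from ``set of crisp solutions'' to ``closed interval'' without further measurability or selection arguments, and everything else reduces to checking that the $\alpha$-level families satisfy the conditions of Theorem~\ref{th2}, which was already established in the proof of Theorem~\ref{th5}.
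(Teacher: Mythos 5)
Your proposal is correct and follows essentially the same route as the paper: identify the parametric family $\{y(x,t'',\textbf{t}',\alpha)\}$ with the attainable set of the differential inclusion at level $\alpha$ (using that $a(t'',\alpha)$ and $\textbf{c}(\textbf{t}',\alpha)$ sweep out $[A]_\alpha$ and $[\textbf{C}^k_\nu]_\alpha$ exactly), then conclude level-wise equality and pass to equality of fuzzy numbers. If anything, your version is slightly more careful than the paper's, which invokes Theorem~\ref{th5} (whose Lipschitz hypotheses are not assumed in Theorem~\ref{th6}), whereas you work directly from the assumed uniqueness and continuity and make explicit the compactness/connectedness argument behind the interval form \eqref{eq15}.
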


\begin{proof}
By Theorem~\ref{th5}, the fuzzy differential equation \eqref{eq5} has a unique
solution $Y_{\textbf{D}^n_{\nu}}(x)$ such that for each $\alpha\in[0,1]$
$$
[Y_{\textbf{D}^n_{\nu}}(x)]_{\alpha}=\{y_{\textbf{d}(\textbf{t},\alpha)}(x)|~
\textbf{d}(\textbf{t}, \alpha)\in[\textbf{D}_{\nu}^n]_{\alpha} \},
$$
where $y_{\textbf{d}(\textbf{t},\alpha)}(\cdot)
=y(\cdot,t'',\textbf{t}',\alpha)$ is a solution of \eqref{eq6}.
From \eqref{eq1} and \eqref{eq22},
\begin{align*}
[Y_{\textbf{D}^n_{\nu}}(x)]_{\alpha}
&=\{y(x,a,\textbf{c},\alpha)|~a\in[A]_{\alpha}, \textbf{c}\in[\textbf{C}^k_{\nu}]_{\alpha} \}\\
&=\hat{L}(x,[A]_{\alpha},[\textbf{C}^k_{\nu}]_{\alpha},\alpha)\\
&=[\hat{L}(x,A,C^k_{\nu})]_{\alpha}
\end{align*}
for any fixed $\textbf{t}'\in[0,1]^k$ and $t''\in[0,1]$, which
shows, from Definition~\ref{def1}, that
$Y_{\textbf{D}^n_{\nu}}(x)=\hat{L}(x,A,C^k_{\nu})$.
\end{proof}

\begin{corollary}
Let $U$ be an open set in $[0,1]^{k+2}$. Suppose that
$f_{\textbf{c}(\textbf{t}',\alpha)}(x,y_{\textbf{d}(\textbf{t},\alpha)}(x))$
is continuous and, for each $(t'',\textbf{t}',\alpha)\in U$, there exists
a unique solution $y_{\textbf{d}(\textbf{t},\alpha)}(\cdot)
=y(\cdot,t'',\textbf{t}',\alpha)$ of problem \eqref{eq6}
where $y(\cdot,t'',\textbf{t}',\alpha)$ is continuous on $U$.
Then, there exists a fuzzy solution $Y_{\textbf{D}^n_{\nu}}(x)$ of \eqref{eq5} such that
$$
Y_{\textbf{D}^n_{\nu}}(x)=\hat{L}(x,A,C^k_{\nu})
$$
for all $a\leq x\leq b$, where $\hat{L}(x,A,C^k_{\nu})$ is
its fuzzy solution via Zadeh's extension principle.
\end{corollary}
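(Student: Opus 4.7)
The plan is to follow exactly the template of the proof of Theorem~\ref{th6}, but close the argument by invoking Theorem~\ref{th1} in place of the level-set characterization for differential inclusions. So the only genuinely new work is to identify the set appearing in the right-hand side of \eqref{eq15} with the $\alpha$-cut produced by Zadeh's extension principle applied to the ``solution-at-$x$'' map, and then appeal to the preceding theorem to finish.

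First, I would repeat verbatim the opening of the proof of Theorem~\ref{th6}: under the stated hypotheses on $f_{\textbf{c}(\textbf{t}',\alpha)}$, Theorem~\ref{th5} yields a unique fuzzy solution $Y_{\textbf{D}^n_{\nu}}$ of \eqref{eq5} whose $\alpha$-level set is
\begin{equation*}
[Y_{\textbf{D}^n_{\nu}}(x)]_{\alpha}
=\{y_{\textbf{d}(\textbf{t},\alpha)}(x) \mid \textbf{d}(\textbf{t},\alpha)\in[\textbf{D}^n_{\nu}]_{\alpha}\}
=\{y(x,t'',\textbf{t}',\alpha) \mid t''\in[0,1],\ \textbf{t}'\in[0,1]^k\},
\end{equation*}
where each $y(\cdot,t'',\textbf{t}',\alpha)$ is the unique solution of \eqref{eq6}. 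Using the parametric representations \eqref{eq1} and \eqref{eq22} for $[A]_{\alpha}$ and $[\textbf{C}^k_{\nu}]_{\alpha}$, this level set is equivalently described, for any fixed $\alpha$, as
\begin{equation*}
[Y_{\textbf{D}^n_{\nu}}(x)]_{\alpha}
=\{y(x,a,\textbf{c},\alpha)\mid a\in[A]_{\alpha},\ \textbf{c}\in[\textbf{C}^k_{\nu}]_{\alpha}\}.
\end{equation*}

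Next, for each fixed $x\in[a,b]$ I would introduce the crisp map
\begin{equation*}
\phi_x:\mathbb{R}\times\mathbb{R}^k\longrightarrow\mathbb{R},\qquad \phi_x(a,\textbf{c})=y(x,a,\textbf{c}),
\end{equation*}
sending an initial value and a parameter vector to the value at time $x$ of the corresponding ODE solution. By hypothesis $y(x,t'',\textbf{t}',\alpha)$ is continuous in its arguments on $U$, so $\phi_x$ is continuous on the relevant neighbourhood. Theorem~\ref{th1} then applies: the Zadeh extension $\Phi_x:\mathcal{F}(\mathbb{R})\times(\mathcal{F}(\mathbb{R}))^k\to\mathcal{F}(\mathbb{R})$ of $\phi_x$ is well defined and satisfies
\begin{equation*}
[\Phi_x(A,\textbf{C}^k_{\nu})]_{\alpha}
=\phi_x\bigl([A]_{\alpha},[\textbf{C}^k_{\nu}]_{\alpha}\bigr)
=\{y(x,a,\textbf{c})\mid a\in[A]_{\alpha},\ \textbf{c}\in[\textbf{C}^k_{\nu}]_{\alpha}\}.
\end{equation*}
By the very definition of a fuzzy solution via Zadeh's extension principle, $\hat{L}(x,A,\textbf{C}^k_{\nu})=\Phi_x(A,\textbf{C}^k_{\nu})$, so the two $\alpha$-cuts coincide for every $\alpha\in[0,1]$. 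Definition~\ref{def1} then gives $Y_{\textbf{D}^n_{\nu}}(x)=\hat{L}(x,A,\textbf{C}^k_{\nu})$ for all $x\in[a,b]$.

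The main obstacle I anticipate is not the ODE analysis (which is handed to us by Theorems~\ref{th4}--\ref{th5}) but rather the clean identification of $\hat{L}$ with the Zadeh extension of the solution-at-$x$ map. One needs to treat the initial datum and the parameter vector as a single fuzzy input of $\phi_x$, check that the continuity assumption on $U$ is strong enough for Theorem~\ref{th1} to apply (continuity of $\phi_x$ on an open set containing the compact level sets $[A]_{\alpha}\times[\textbf{C}^k_{\nu}]_{\alpha}$ suffices), and verify that the resulting $\Phi_x$ is indeed what the literature denotes by $\hat{L}(x,A,\textbf{C}^k_{\nu})$. Once this matching is made, the corollary is an immediate consequence of Theorem~\ref{th6}: both $Y_{\textbf{D}^n_{\nu}}(x)$ and the Zadeh-extension solution agree with the differential-inclusion solution, hence with each other.
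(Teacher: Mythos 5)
Your proposal is correct in substance, but it takes a genuinely different route from the paper. The paper's own proof is a two-line citation: it combines Theorem~\ref{th6} (which identifies $Y_{\textbf{D}^n_{\nu}}(x)$ with the differential-inclusion solution $\hat{L}(x,A,C^k_{\nu})$) with Corollary~1 of \cite{Chalco2007}, which asserts that, under continuity and uniqueness hypotheses, the differential-inclusion solution and the Zadeh-extension solution coincide; the equality with the Zadeh-extension solution is thus obtained by transitivity through an external result. You instead argue directly: you rebuild the level-set description $[Y_{\textbf{D}^n_{\nu}}(x)]_{\alpha}=\{y(x,a,\textbf{c})\mid a\in[A]_{\alpha},\ \textbf{c}\in[\textbf{C}^k_{\nu}]_{\alpha}\}$ exactly as in the proof of Theorem~\ref{th6}, and then identify this set with the $\alpha$-cut of the Zadeh extension of the solution-at-$x$ map $\phi_x$ via Theorem~\ref{th1}, which requires only continuity of $\phi_x$ on a neighbourhood of the compact product of level sets. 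What your approach buys is self-containedness --- it makes explicit the identification that the citation to \cite{Chalco2007} hides, and it does not route through differential inclusions at all; what the paper's approach buys is brevity and the reuse of an already established equivalence. Two minor points to tidy: (i) you open by saying Theorem~\ref{th5} ``under the stated hypotheses'' yields the unique fuzzy solution, but the corollary only assumes continuity plus existence and uniqueness of the crisp solutions, not the Lipschitz conditions of Theorem~\ref{th5} (the paper's proof of Theorem~\ref{th6} has the same slippage, so this is inherited rather than introduced); (ii) when applying Theorem~\ref{th1} you should note explicitly that the pair $(A,\textbf{C}^k_{\nu})$ is treated as a single fuzzy input whose $\alpha$-cut is the Cartesian product $[A]_{\alpha}\times[\textbf{C}^k_{\nu}]_{\alpha}$, which is the standard convention but is what makes the level-wise formula $[\Phi_x(A,\textbf{C}^k_{\nu})]_{\alpha}=\phi_x([A]_{\alpha}\times[\textbf{C}^k_{\nu}]_{\alpha})$ legitimate.
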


\begin{proof}
The result follows from our Theorem~\ref{th6} and Corollary~1
of \cite{Chalco2007}.
\end{proof}

Our approach has the advantage that allows to obtain
the main properties of ordinary differential equations in a natural way.
Additionally, one can get algorithms for obtaining a fuzzy solution
as an extension of known algorithms for ordinary differential equations.

Another approach is possible, based on the definition of $p$-derivative for fuzzy valued functions,
using the parametric representation \eqref{equ1}. In such approach, more than one solution exists.
The existence of several solutions may be an advantage when one searches for solutions
that have specific properties, such as periodic, almost periodic or asymptotically stable.
Precisely, the following fuzzy differential equation can be considered
using the $p$-derivative:
\begin{equation}
\label{eq66}
Y'=F(x,Y), ~~Y(x_0)=A,
\end{equation}
where $Y$ and $F$ are fuzzy valued functions defined in terms of their $\alpha$-level sets
\begin{align*}
[Y(x)]_{\alpha}&=\{ y_{\alpha}^-(x)+t(y_{\alpha}^+(x)-y_{\alpha}^-(x))| t\in[0,1] \},\\
[F(x,Y(x))]_{\alpha}&=\{f_{\alpha}^-(x,y_{\alpha}^-,y_{\alpha}^+)
+t(f_{\alpha}^+(x,y_{\alpha}^-,y_{\alpha}^+)-f_{\alpha}^-(x,y_{\alpha}^-,y_{\alpha}^+))| t\in[0,1]\},
\end{align*}
and $A$ is a fuzzy number with
\begin{equation*}
[A]_{\alpha}=\left\{a^-_{\alpha}+t(a^+_{\alpha}-a^-_{\alpha})| t\in[0,1] \right\}.
\end{equation*}
If we write \eqref{eq66} in terms of its $\alpha$-level set, then, from relations \eqref{eq33} and \eqref{eq333},
the two following cases can be obtained:
\begin{description}
\item[I.] $\left
\{
\begin{array}{lll}
\{ (y_{\alpha}^-)'(x)+t((y_{\alpha}^+)'(x)-(y_{\alpha}^-)'(x))| t\in[0,1] \}\\
\hspace{5cm}=\{f_{\alpha}^-(x,y_{\alpha}^-,y_{\alpha}^+)+t(f_{\alpha}^+(x,y_{\alpha}^-,y_{\alpha}^+)
-f_{\alpha}^-(x,y_{\alpha}^-,y_{\alpha}^+))| t\in[0,1]  \},\\
\{y_{\alpha}^-(x_0)+t(y_{\alpha}^+(x_0)-y_{\alpha}^-(x_0))| t\in[0,1] \}
=\{a^-_{\alpha}+t(a^+_{\alpha}-a^-_{\alpha})| t\in[0,1] \},
\end{array}
\right.$
\item[II.] $\left\{
\begin{array}{ll}
\{(y_{\alpha}^+)'(x)+t((y_{\alpha}^-)'(x)-(y_{\alpha}^+)'(x))| t\in[0,1] \}\\
\hspace{4.9cm}=\{f_{\alpha}^-(x,y_{\alpha}^-,y_{\alpha}^+)+t(f_{\alpha}^+(x,y_{\alpha}^-,y_{\alpha}^+)
-f_{\alpha}^-(x,y_{\alpha}^-,y_{\alpha}^+))| t\in[0,1]  \},\\
\{y_{\alpha}^-(x_0)+t(y_{\alpha}^+(x_0)-y_{\alpha}^-(x_0))| t\in[0,1] \}
=\{a^-_{\alpha}+t(a^+_{\alpha}-a^-_{\alpha})| t\in[0,1] \}.
\end{array}
\right.$
\end{description}
Finally, for fixed $x$, two systems can be deduced:
\begin{equation}
\label{eq12}
\left\lbrace
\begin{array}{ll}
(y_{\alpha}^-)'(x)=f_{\alpha}^-(x,y_{\alpha}^-(x),y_{\alpha}^+(x)), ~~y_{\alpha}^-(x_0)=a_{\alpha}^-\\
(y_{\alpha}^+)'(x)=f_{\alpha}^+(x,y_{\alpha}^-(x),y_{\alpha}^+(x)), ~~y_{\alpha}^+(x_0)=a_{\alpha}^+,
\end{array}\right.
\end{equation}
\begin{equation}
\label{eq13}
\left\lbrace
\begin{array}{ll}
(y_{\alpha}^-)'(x)=f_{\alpha}^+(x,y_{\alpha}^-(x),y_{\alpha}^+(x)), ~~y_{\alpha}^-(x_0)=a_{\alpha}^-\\
(y_{\alpha}^+)'(x)=f_{\alpha}^-(x,y_{\alpha}^-(x),y_{\alpha}^+(x)), ~~y_{\alpha}^+(x_0)=a_{\alpha}^+.
\end{array}\right.
\end{equation}
In this approach, the unicity of the solution is lost, but that it is an expected situation in the fuzzy context.
Nevertheless, we can speak of existence and unicity of two solutions
(one solution for each lateral derivative), as is shown in the following result.

\begin{theorem}
Let $F: [a,b]\times \mathcal{F}(\mathbb{R})\rightarrow \mathcal{F}(\mathbb{R})$
be continuous and assume that there exists a constant $L>0$ such that
$$
D(F(x,Y),F(x,Z))\leq L D(Y,Z)
$$
for all $x \in [a,b]$ and $Y, Z\in \mathcal{F}(\mathbb{R})$.
Then problem \eqref{eq66} has two possible solutions on $[a,b]$.
\end{theorem}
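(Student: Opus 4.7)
The plan is to reduce the fuzzy Cauchy problem \eqref{eq66} to the two deterministic coupled ODE systems \eqref{eq12} and \eqref{eq13}, and then apply a classical Picard--Lindel\"of argument to each of them.

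First I would write \eqref{eq66} in $\alpha$-level form using the parametric representations of $Y(x)$, $F(x,Y(x))$ and $A$ introduced just before the statement. Applying Definition~\ref{def777} together with Remark~\ref{rem1}, the $i$-$p$-differentiable case yields system \eqref{eq12} and the $d$-$p$-differentiable case yields system \eqref{eq13}. Thus producing two solutions amounts to exhibiting, for each system separately, a pair of real-valued functions $y_\alpha^-(x), y_\alpha^+(x)$ solving the associated classical Cauchy problem on $[a,b]$ and assembling level-wise into a fuzzy valued function.

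Second, I would transfer the hypothesis $D(F(x,Y),F(x,Z)) \le L\, D(Y,Z)$ to a pointwise Lipschitz estimate on the endpoint functions $f_\alpha^\pm$. Using Definition~\ref{def333} and the fact that $d([F(x,Y)]_\alpha,[F(x,Z)]_\alpha)$ equals the maximum of $|f_\alpha^-(x,y_\alpha^-,y_\alpha^+)-f_\alpha^-(x,z_\alpha^-,z_\alpha^+)|$ and $|f_\alpha^+(x,y_\alpha^-,y_\alpha^+)-f_\alpha^+(x,z_\alpha^-,z_\alpha^+)|$, the assumption yields, uniformly in $\alpha\in[0,1]$, the Lipschitz property
$$
\bigl\| \bigl(f_\alpha^-(x,u),f_\alpha^+(x,u)\bigr) - \bigl(f_\alpha^-(x,v),f_\alpha^+(x,v)\bigr)\bigr\|_\infty \le L \|u-v\|_\infty
$$
for $u=(u^-,u^+), v=(v^-,v^+)\in\mathbb{R}^2$. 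Thus, for each fixed $\alpha$, each of the systems \eqref{eq12} and \eqref{eq13} is a two-dimensional Cauchy problem on $[a,b]$ with continuous and globally Lipschitz right-hand side. Classical Picard--Lindel\"of (or equivalently the Banach fixed-point theorem applied to the integral operator on $C([a,b];\mathbb{R}^2)$ equipped with a Bielecki weighted norm) produces a unique pair $y_\alpha^-(\cdot), y_\alpha^+(\cdot)$ for each system, giving the two candidate solutions on the whole interval $[a,b]$.

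The main obstacle will be the last step: checking that the two resulting families $\{[y_\alpha^-(x),y_\alpha^+(x)]: \alpha\in[0,1]\}$ really define fuzzy numbers $Y(x)$ in the sense of Lemma~\ref{lem1} and Theorem~\ref{th2}. Concretely, one needs $y_\alpha^-(x)\le y_\alpha^+(x)$, together with the correct monotonicity in $\alpha$ and the one-sided continuity at every $\alpha\in[0,1]$. These properties are inherited from the initial datum $A$ and from $F$ via the continuous and Lipschitz dependence of solutions of ODEs on parameters: treating $\alpha$ as a parameter in the right-hand side of \eqref{eq12} (resp.\ \eqref{eq13}) and using the same Lipschitz constant $L$, a Gr\"onwall estimate propagates the required monotonicity and left-continuity from the initial condition to every $x\in[a,b]$. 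Once this is verified, Lemma~\ref{lem1} produces, for each $x$, a well-defined fuzzy number $Y(x)$, and the two systems give two possible fuzzy solutions of \eqref{eq66}, completing the argument.
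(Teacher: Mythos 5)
The paper itself does not really prove this statement: its entire proof is the single line ``Follows from Theorem~6 of \cite{Chalco2008}'', so your direct argument amounts to reconstructing the proof of that cited result. Your first two steps are sound. The reduction of \eqref{eq66} to the systems \eqref{eq12} and \eqref{eq13} is exactly how the paper sets things up, and the transfer of the hypothesis $D(F(x,Y),F(x,Z))\le L\,D(Y,Z)$ to a Lipschitz bound on the endpoint functions $f_\alpha^{\pm}$, uniform in $\alpha$, does work: test the hypothesis on the crisp-interval fuzzy numbers all of whose level sets equal $[u^-,u^+]$, respectively $[v^-,v^+]$, and the supremum over $\alpha$ on both sides collapses to the desired pointwise estimate. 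Picard--Lindel\"of then gives, for each fixed $\alpha$, a unique global solution of each two-dimensional system.

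The gap is in the step you yourself flag as the main obstacle. A Gr\"onwall estimate gives continuous (indeed Lipschitz) dependence of $(y_\alpha^-,y_\alpha^+)$ on the parameter $\alpha$, but it does not ``propagate monotonicity'': it bounds $|y_\alpha^{\pm}(x)-y_\beta^{\pm}(x)|$ and says nothing about the sign of $y_\alpha^{-}(x)-y_\beta^{-}(x)$, nor about $y_\alpha^-(x)\le y_\alpha^+(x)$. For system \eqref{eq12} the standard repair is not a componentwise Gr\"onwall argument but a fixed-point iteration performed directly in the complete metric space $(\mathcal{F}(\mathbb{R}),D)$ on the operator $Y\mapsto A+\int_{x_0}^{x}F(u,Y(u))\,du$: every Picard iterate is automatically a fuzzy number, hence so is the limit, and the stacking conditions of Theorem~\ref{th2} come for free. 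For system \eqref{eq13} the difficulty is worse than an unverified detail: subtracting the two equations gives $(y_\alpha^+-y_\alpha^-)'=f_\alpha^--f_\alpha^+\le 0$, so the diameter of $[y_\alpha^-(x),y_\alpha^+(x)]$ is nonincreasing and can become negative, at which point the family of intervals violates Lemma~\ref{lem1} and defines no fuzzy number. Consequently the $d$-$p$-differentiable solution in general exists only on a subinterval $[x_0,x_0+r]$ determined by where the relevant Hukuhara differences cease to exist; this restriction is exactly how Theorem~6 of \cite{Chalco2008} (and the analogous results in \cite{Bede2005}) is formulated, and it is the content hidden behind the word ``possible'' in the statement. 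As written, your argument claims both solutions on all of $[a,b]$, which does not follow.
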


\begin{proof}
Follows from Theorem~6 of \cite{Chalco2008}.
\end{proof}

\begin{example}
\label{ex:4.6}
Consider the fuzzy differential equation
\begin{equation}
\label{eq16}
\left\lbrace
\begin{array}{l}
Y_{\textbf{D}_{\nu}^n}'(x)=-Y_{\textbf{D}_{\nu}^n}(x)+(-2,1,4)
\cdot \cos(x),\\
Y_{\textbf{D}_{\nu}^n}(0)=(1,2,3),~~~~~x\in[0,4].
\end{array}\right.
\end{equation}
Using the first approach, the solution of the problem is obtained
from the ordinary differential equation
\begin{align*}
y'_{\textbf{d}(\textbf{t},\alpha)}(x)
&=-y_{\textbf{d}(\textbf{t},\alpha)}(x)+(-2+3\alpha+t'(6-6\alpha))\cos(x),\\\nonumber
y_{\textbf{d}(\textbf{t},\alpha)}(0)
&=1+\alpha+t''(2-2\alpha),~~~~t',t'' \in [0,1],~x \in [0,4],
\end{align*}
with solution
$$
y_{\textbf{d}(\textbf{t},\alpha)}(x)=(-2+3\alpha+t'(6-6\alpha))\frac{\cos(x)
+\sin(x)-e^{-x}}{2}+(1+\alpha+t''(2-2\alpha))e^{-x}.
$$
Hence, the $\alpha$-level set of the solution \eqref{eq16} is
\begin{equation*}
[Y_{\textbf{D}_{\nu}^2}(x)]_{\alpha}
=\left\{ (-2+3\alpha+t'(6-6\alpha))\frac{\cos(x)
+\sin(x)-e^{-x}}{2}+(1+\alpha+t''(2-2\alpha))e^{-x}| ~ t', t''\in[0,1]\right\},
\end{equation*}
which is the $\alpha$-level set of the fuzzy valued function
\begin{equation*}
Y_{\textbf{D}_{\nu}^2}(x)=(-2,1,4)
\cdot \frac{\cos(x)+\sin(x)-e^{-x}}{2}+(1,2,3) \cdot e^{-x}.
\end{equation*}
On the other hand, from \eqref{eq15}, the lower and upper bounds of the solution is obtained as
\begin{equation*}
[Y_{\textbf{D}_{\nu}^n}(x)]_{\alpha}
= \left\lbrace
\begin{array}{ll}
~[(-2+3\alpha)\frac{\cos(x)+\sin(x)-e^{-x}}{2}+(1+\alpha)e^{-x} ,\\
\hspace{3.5cm}~(4-3\alpha)\frac{\cos(x)+\sin(x)-e^{-x}}{2}+(3-\alpha)e^{-x} ],
& x\in [0,2.2841] \\
~[(4-3\alpha)\frac{\cos(x)+\sin(x)-e^{-x}}{2}+(1+\alpha)e^{-x} ,\\
\hspace{3.5cm}~(-2+3\alpha)\frac{\cos(x)+\sin(x)-e^{-x}}{2}+(3-\alpha)e^{-x} ],
& x\in [2.2841,4],
\end{array}\right.
\end{equation*}
which is represented in Figure~\ref{fig22}.
\begin{figure}
\centering{\includegraphics[height=6cm,width=10cm]{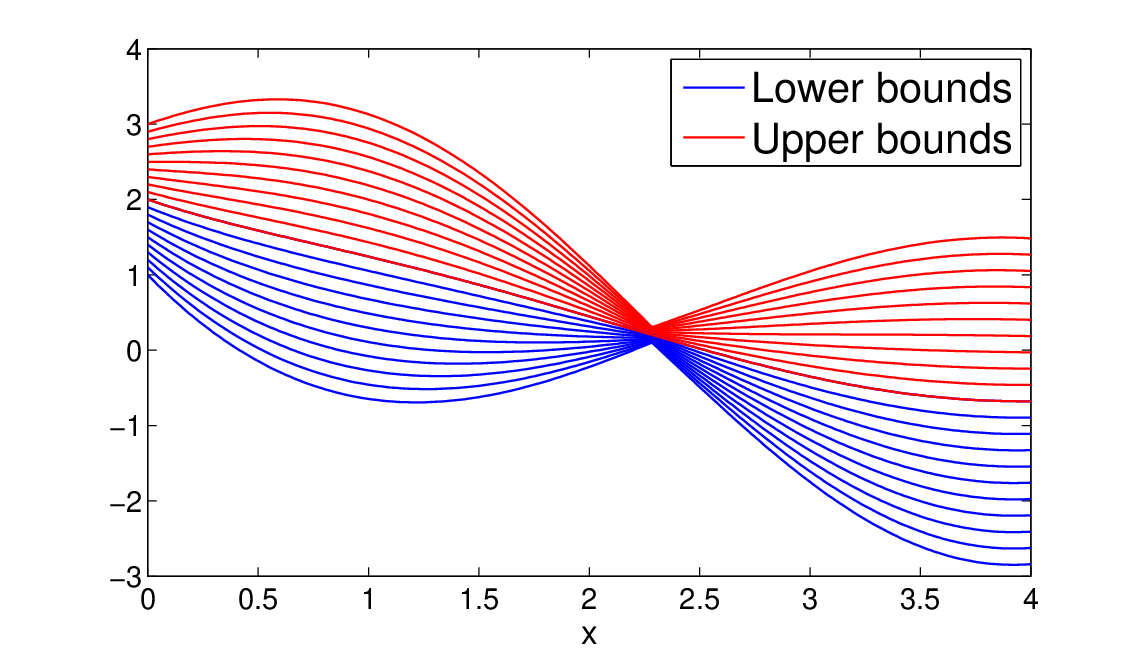}
\caption{\footnotesize{Example~\ref{ex:4.6}, lower and upper bounds
for the solution of \eqref{eq16} (first approach).}}\label{fig22}}
\end{figure}
Now, let us to use the second approach for the fuzzy differential equation \eqref{eq16}.
Systems \eqref{eq12} and \eqref{eq13} are
\begin{equation*}
\left\lbrace
\begin{array}{ll}
(y_{\alpha}^-)'(x)=\left\lbrace
\begin{array}{ll}
-y_{\alpha}^+(x)+(-2+3\alpha)\cos(x), ~~0\leq x\leq \frac{\pi}{2}\\
-y_{\alpha}^+(x)+(4-3\alpha)\cos(x), ~~~~\frac{\pi}{2}\leq x\leq 4\\
\end{array}\right.\\
(y_{\alpha}^+)'(x)=\left\lbrace
\begin{array}{ll}
-y_{\alpha}^-(x)+(4-3\alpha)\cos(x), ~~~~0\leq x\leq \frac{\pi}{2}\\
-y_{\alpha}^-(x)+(-2+3\alpha)\cos(x), ~~\frac{\pi}{2}\leq x\leq 4\\
\end{array}\right.\\
y_{\alpha}^-(0)=1+\alpha, ~~y_{\alpha}^+(0)=3-\alpha,
\end{array}\right.
\end{equation*}
and
\begin{equation*}
\left\lbrace
\begin{array}{ll}
(y_{\alpha}^-)'(x)=\left\lbrace
\begin{array}{ll}
-y_{\alpha}^-(x)+(4-3\alpha)\cos(x), ~~~~0\leq x\leq \frac{\pi}{2}\\
-y_{\alpha}^-(x)+(-2+3\alpha)\cos(x), ~~\frac{\pi}{2}\leq x\leq 4\\
\end{array}\right.\\
(y_{\alpha}^+)'(x)=\left\lbrace
\begin{array}{ll}
-y_{\alpha}^+(x)+(-2+3\alpha)\cos(x), ~~0\leq x\leq \frac{\pi}{2}\\
-y_{\alpha}^+(x)+(4-3\alpha)\cos(x), ~~~~\frac{\pi}{2}\leq x\leq 4\\
\end{array}\right.\\
y_{\alpha}^-(0)=1+\alpha, ~~y_{\alpha}^+(0)=3-\alpha,
\end{array}\right.
\end{equation*}
respectively. As stated previously, equation \eqref{eq16}
has exactly two solutions: one of them is
\begin{align*}
[Y_1(x)]_{\alpha}=[(2-1.5\alpha)\cos(x)-
&(1-1.5\alpha)\sin(x)-2.5(1-\alpha)e^x+1.5e^{-x},\\
&(-1+1.5\alpha)\cos(x)+(2-1.5\alpha)\sin(x)+2.5(1-\alpha)e^x+1.5e^{-x}],
\end{align*}
for $0\leq x\leq \frac{\pi}{2}$, and for $\frac{\pi}{2}\leq x\leq 4$ one has
\begin{align*}
[Y_1(x)]_{\alpha}=[
&(-1+1.5\alpha)\cos(x)+(2-1.5\alpha)\sin(x)
-3.1236(1-\alpha)e^x+(1.5+1.2891\times 10^{-15}\alpha)e^{-x},\\
&(2-1.5\alpha)\cos(x)-(1-1.5\alpha)\sin(x)
+3.1236(1-\alpha)e^x+(1.5+1.2891\times 10^{-15}\alpha)e^{-x}],
\end{align*}
that starts with $(i)$-$p$-differentiability, and there is no switching point on its trajectory.
The second one starts with $(d)$-$p$-differentiability and has two switching points at $x_1=0.2916$
and $x_2=2.8501$ so we have to switch to the case of $(i)$-$p$-differentiability. Therefore,
\begin{align*}
[Y_2(x)]_{\alpha}=[(2-1.5\alpha)(\cos(x)+\sin(x))-&(1-2.5\alpha)e^{-x},\\
&(-1+1.5\alpha)(\cos(x)+\sin(x))+(4-2.5\alpha)e^{-x}],
\end{align*}
$0\leq x\leq 0.2916$,
\begin{align*}
[Y_2(x)]_{\alpha}=[(2-1.5&\alpha)\cos(x)-(1-1.5\alpha)\sin(x)-0.75312(1-\alpha)e^{x}+1.5e^{-x},\\
&(-1+1.5\alpha)\cos(x)+(2-1.5\alpha)\sin(x)+0.75312(1-\alpha)e^{x}+1.5e^{-x}],
\end{align*}
$0.2916\leq x\leq \frac{\pi}{2}$,
\begin{align*}
[Y_2(x)]_{\alpha}=[(-1+1.5\alpha)(\cos(x)+&\sin(x))-(15.886-17.386\alpha)e^{-x},\\
&(2-1.5\alpha)(\cos(x)+\sin(x))+(18.886-17.386\alpha)e^{-x}],
\end{align*}
$\frac{\pi}{2}\leq x\leq 2.8501$, and
\begin{align*}
[Y_2(x)]_{\alpha}=[(-1+1.5&\alpha)\cos(x)+(2-1.5\alpha)\sin(x)-0.10803(1-\alpha)e^{x}+1.5e^{-x},\\
&(2-1.5\alpha)\cos(x)-(1-1.5\alpha)\sin(x)+0.10803(1-\alpha)e^{x}+1.5e^{-x}],
\end{align*}
$2.8501\leq x\leq 4$. The solutions are presented in Figure~\ref{fig4}.
\begin{figure}
\centering
{\includegraphics[width=0.5\linewidth]{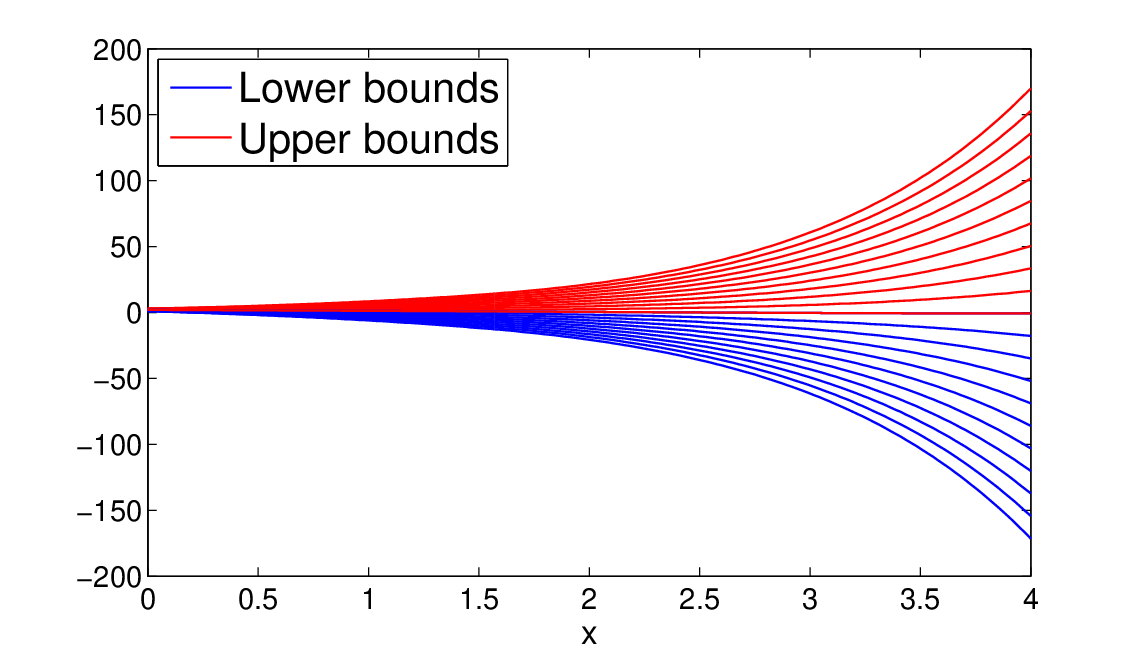}\label{fig2:sub1}}
{\includegraphics[width=0.5\linewidth]{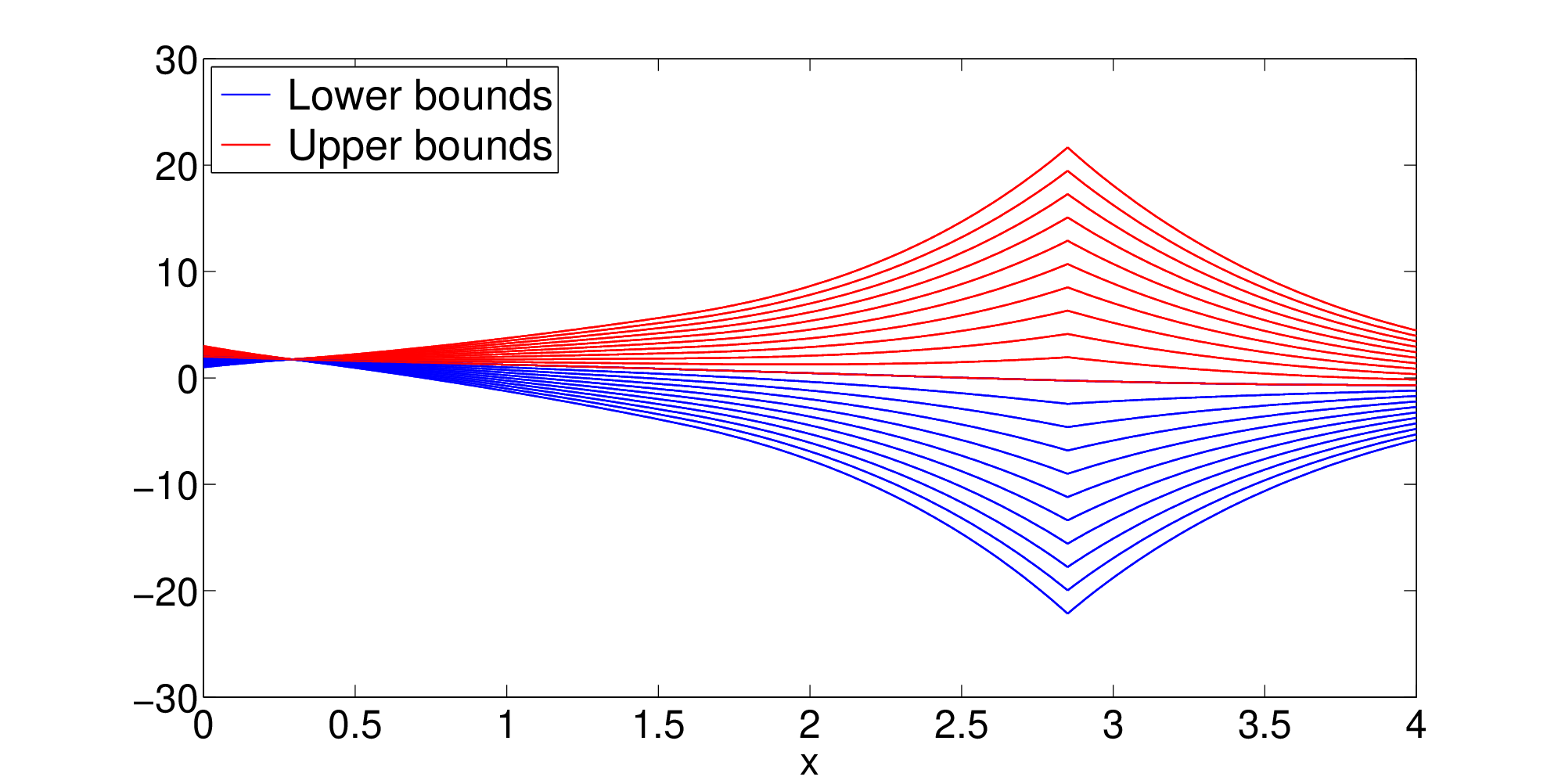}\label{fig2:sub2}}
\caption{Example~\ref{ex:4.6}, lower and upper bounds of (a)
$i$-$p$-differentiable and (b) $d$-$p$-differentiable solutions of
\eqref{eq16}.} \label{fig4}
\end{figure}
\end{example}

In our next example, we consider a simple model
for financial institution accounts and interest.

\begin{example}
\label{ex:financial}
Consider a customer who wants to make a deal with a financial institution.
He wants to invest an amount $Y_0$ by making a long-term deposit and earning
from the offered interest from the financial institution.
The financial institution offers the customer an interest rate of $5\%$ annually,
with the agreement that the money will remain deposited at the financial institution
from the first day that the capital is productive.
Additionally, deposits or withdrawals can exist in addition to capital gains.
Assume that the deposits or withdrawals take place at a constant rate $K$,
where $K$ is positive for deposits and negative for withdrawals.
Then, the rate of change of the value of the capital, $\frac{dY}{dx}$, is
\begin{equation}
\label{eq17}
\frac{dY}{dx}=0.05 \cdot Y+K, \quad Y(0)=Y_0.
\end{equation}
It is obvious that the customer wants to compare the results of different capital programs,
to see what rates of $K$ and $Y_0$ are profitable.
In this way, the parameters $K$ and $Y_0$ can be considered to be uncertain.
Depending on the nature of the uncertainty, the problem can be modeled by different
methods such as stochastic analysis, interval analysis, and the fuzzy concept.
Here we model the problem by means of fuzzy numbers by setting
$K=(-160,0,160)$ and $Y_0=(3000,3500,4000)$.

From our first approach, the corresponding ordinary differential equation is
\begin{align*}
y'_{\textbf{d}(\textbf{t},\alpha)}(x)
&=0.05y_{\textbf{d}(\textbf{t},\alpha)}(x)+(-160+160\alpha+t'(320-320\alpha)),\\\nonumber
y_{\textbf{d}(\textbf{t},\alpha)}(0)
&=3000+500\alpha+t''(1000-1000\alpha),~~~~t',t'' \in [0,1].
\end{align*}
Hence, by the first approach, the fuzzy solution
\begin{equation*}
Y_{\textbf{D}_{\nu}^2}(x)=(3000,3500,4000)\cdot e^{0.05x}+\frac{(-160,0,160)}{0.05} \cdot (e^{0.05x}-1)
\end{equation*}
is obtained, which is represented in Figure~\ref{fig6:sub1} for $50$ years.

Using our second approach, the two following systems are obtained:
\begin{equation*}
\left\lbrace
\begin{array}{ll}
(y_{\alpha}^-)'(x)=0.05y_{\alpha}^-(x)+(-160+160\alpha), & y_{\alpha}^-(0)=3000+500\alpha\\
(y_{\alpha}^+)'(x)=0.05y_{\alpha}^+(x)+(160-160\alpha), & y_{\alpha}^+(0)=4000-500\alpha,
\end{array}\right.
\end{equation*}
\begin{equation*}
\left\lbrace
\begin{array}{ll}
(y_{\alpha}^-)'(x)=0.05y_{\alpha}^+(x)+(160-160\alpha), & y_{\alpha}^-(0)=3000+500\alpha\\
(y_{\alpha}^+)'(x)=0.05y_{\alpha}^-(x)+(-160+160\alpha), & y_{\alpha}^+(0)=4000-500\alpha.
\end{array}\right.
\end{equation*}
By considering $(i)$-$p$-differentiability, the solution
\begin{align*}
[Y_1(x)]_{\alpha}=[(3700\alpha-200)e^{0.05x}-3200(\alpha-1),
~(7200-3700\alpha)e^{0.05x}+3200(\alpha-1)]
\end{align*}
is obtained, with no switching point. The second solution starts with $(d)$-$p$-differentiability
and has one switching point at $x=2.9036$, where it switches to the case of $(i)$-$p$-differentiability.
Therefore, the solution is
\begin{align*}
[Y_2(x)]_{\alpha}=[(3700\alpha-3700)e^{-0.05x}+3500e^{0.05x}&-3200(\alpha-1),\\
&(3700-3700\alpha)e^{-0.05x}+3500e^{0.05x}+3200(\alpha-1)]
\end{align*}
for $0\leq x\leq 2.9036$, and
\begin{align*}
[Y_2(x)]_{\alpha}=\left[\left(\frac{27100+102400\alpha}{37}\right)e^{0.05x}-3200(\alpha-1),
~\left(\frac{231900-102400\alpha}{37}\right)e^{0.05x}+3200(\alpha-1)\right]
\end{align*}
for $2.9036\leq x\leq50$. The solutions are shown in Figure~\ref{fig6}.
\begin{figure}
\centering
{\includegraphics[width=0.5\linewidth]{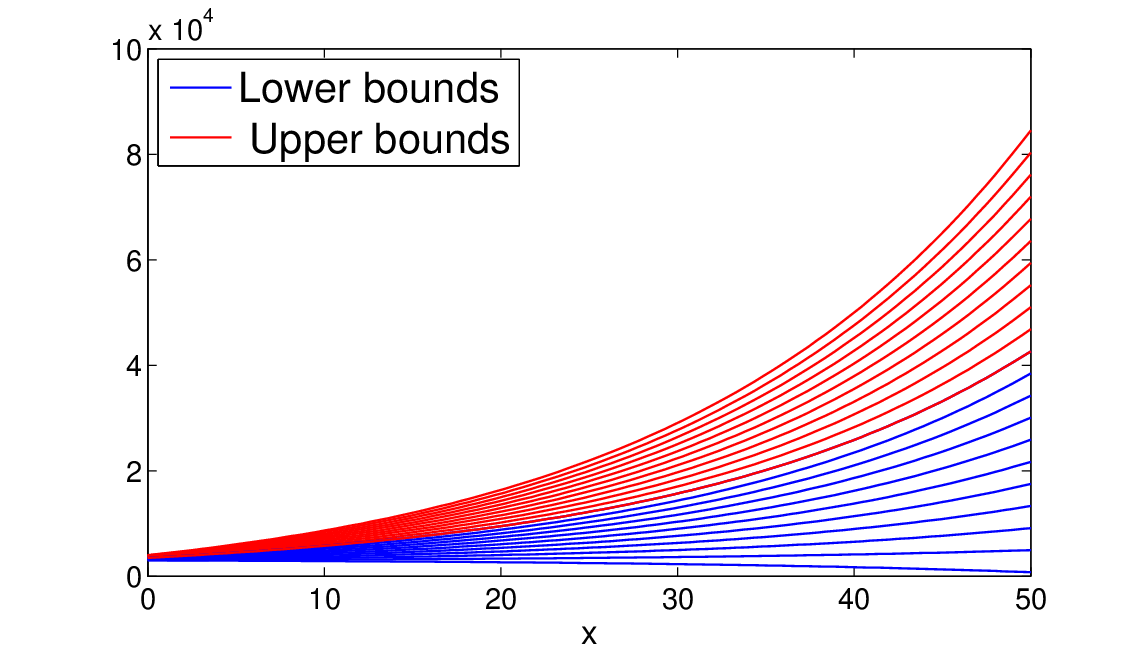}\label{fig6:sub1}}
{\includegraphics[width=0.5\linewidth]{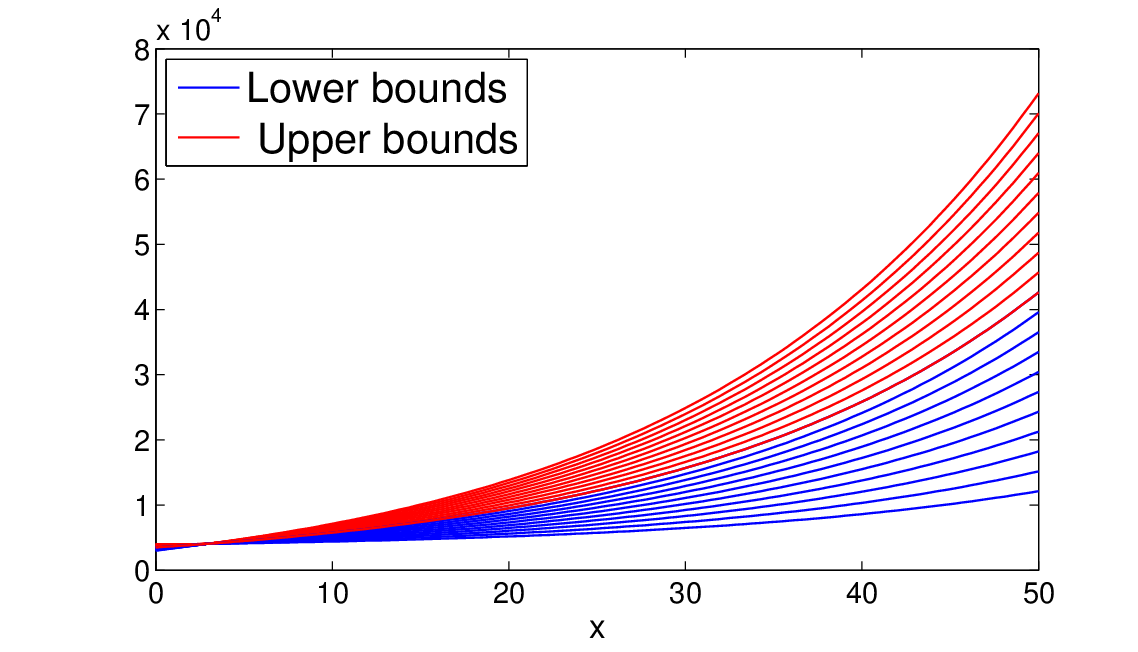}\label{fig6:sub2}}
\caption{Example~\ref{ex:financial}, lower and upper bounds of (a)
the solution by the first approach and $i$-$p$-differentiability and
(b) $d$-$p$-differentiable solution of \eqref{eq17}.} \label{fig6}
\end{figure}
\end{example}


\section{Conclusion}
\label{sec:05}

Based on parametric representations of $\alpha$-level sets,
two novel representations for fuzzy valued functions were introduced.
Using them, two analytical approaches for fuzzy differential
equations were proposed. In the first approach, the fuzzy differential equation
is converted into a crisp differential equation. Then, the transformed equation
is solved, which gives the solution to the original equation.
It is shown that this approach is related to Zadeh's extension principle
via differential inclusions, in the sense that both make use of the classical
derivative and, under certain conditions, give rise to the same solution.
In the second approach, the fuzzy differential equation originates two systems,
in such a way that two possible solutions may be attained. In this case,
the solutions are coincident with the ones obtained by using the generalized
Hukuhara differentiability concept.


\section*{Acknowledgement}

Torres is supported by FCT via project UIDB/04106/2020 (CIDMA).




\begin{thebibliography}{99}

\bibitem{Barros}
L. C. Barros, L. T. Gomes, P. A. Tonelli,
\emph{Fuzzy differential equations: an approach 
via fuzzification of the derivative operator},
Fuzzy Sets and Systems {\bf 230} (2013), 39--52.

\bibitem{Bedeebook}
B. Bede,
\emph{Mathematics of fuzzy sets and fuzzy logic},
Studies in Fuzziness and Soft Computing, 295, Springer, Heidelberg, 2013.

\bibitem{Bede2005}
B. Bede, S. G. Gal,
\emph{Generalizations of the differentiability of fuzzy-number-valued functions
with applications to fuzzy differential equations},
Fuzzy Sets and Systems {\bf 151(3)} (2005), 581--599.

\bibitem{Bede2010}
B. Bede, S. G. Gal,
\emph{Solutions of fuzzy differential equations based on generalized differentiability},
Commun. Math. Anal. {\bf 9(2)} (2010), 22--41.

\bibitem{Bede2013}
B. Bede, L. Stefanini,
\emph{Generalized differentiability of fuzzy-valued functions},
Fuzzy Sets and Systems {\bf 230} (2013), 119--141.

\bibitem{Bencsik}
A. Bencsik, B. Bede, J. Tar, J. Fodor,
\emph{Fuzzy differential equations in modeling hydraulic differential servo cylinders},
Third Romanian-Hungarian Joint Symposium on Applied Computational Intelligence,
SACI, Timisoara, Romania, 2006.

\bibitem{Buckley}
J. J. Buckley, T. Feuring,
\emph{Fuzzy differential equations},
Fuzzy Sets and Systems {\bf 110(1)} (2000), 43--54.

\bibitem{Chalco20}
Y. Chalco-Cano, W. A. Lodwick, B. Bede,
\emph{Single level constraint interval arithmetic},
Fuzzy Sets and Systems {\bf 257} (2014), 146--168.

\bibitem{Chalco2008}
Y. Chalco-Cano, H. Rom\'{a}n-Flores,
\emph{On new solutions of fuzzy differential equations},
Chaos Solitons Fractals {\bf 38(1)} (2008), 112--119.

\bibitem{Chalco2009}
Y. Chalco-Cano, H. Rom\'{a}n-Flores,
\emph{Comparison between some approaches to solve fuzzy differential equations},
Fuzzy Sets and Systems {\bf 160(11)} (2009), 1517--1527.

\bibitem{Chalco2013}
Y. Chalco-Cano, H. Rom\'{a}n-Flores,
\emph{Some remarks on fuzzy differential equations via differential inclusions},
Fuzzy Sets and Systems {\bf 230} (2013), 3--20.

\bibitem{ChalcoY}
Y. Chalco-Cano, H. Rom\'{a}n-Flores, M. Rojas-Medar, O.R. Saavedra, M.D. Jim\'{e}nez-Gamero,
\emph{The extension principle and a decomposition of fuzzy sets},
Inform. Sci. {\bf 177(23)} (2007), 5394--5403.

\bibitem{Chalco200}
Y. Chalco-Cano, A. Rufi\'{a}n-Lizana, H. Rom\'{a}n-Flores, M.D. Jim\'{e}nez-Gamero,
\emph{Calculus for interval-valued functions using generalized Hukuhara derivative and applications},
Fuzzy Sets and Systems {\bf 219} (2013), 49--67.

\bibitem{Diamond}
P. Diamond,
\emph{Time-dependent differential inclusions, cocycle attractors and fuzzy differential equations},
IEEE Trans. Fuzzy Syst. {\bf 7} (1999), 734--740.

\bibitem{Diamond2}
P. Diamond,
\emph{Stability and periodicity in fuzzy differential equations},
IEEE Trans. Fuzzy Syst. {\bf 8} (2000), 583--590.

\bibitem{Dubois}
D. Dubois, H. Prade,
\emph{Operations on fuzzy numbers},
Internat. J. Systems Sci. {\bf 9(6)} (1978), 613--626.

\bibitem{Giachetti}
R. E. Giachetti, R. E. Young,
\emph{A parametric representation of fuzzy numbers and their arithmetic operators},
Fuzzy Sets and Systems {\bf 91(2)} (1997), 185--202.

\bibitem{Goetschel}
R. Goetschel, Jr., W. Voxman,
\emph{Elementary fuzzy calculus},
Fuzzy Sets and Systems {\bf 18(1)} (1986), 31--43.

\bibitem{Gomes2015}
L.T. Gomes, L.C. Barros,
\emph{A note on the generalized difference and the generalized differentiability},
Fuzzy Sets and Systems {\bf 280} (2015), 142--145.

\bibitem{Heidari2017}
M. Heidari, M. Ramezanzadeh, A.H. Borzabadi, O.S. Fard,
\emph{Solutions to fuzzy variational problems: necessary and sufficient conditions},
Int. J. Modelling, Identification and Control {\bf 28} (2017), 187--198.

\bibitem{Heidari2016}
M. Heidari, M.R. Zadeh, O.S. Fard, A.H. Borzabadi,
\emph{On unconstrained fuzzy-valued optimization problems},
Int. J. Fuzzy Syst. {\bf 18(2)} (2016), 270--283.

\bibitem{Huang}
H. Huang, C. Wu,
\emph{Approximation of fuzzy functions by regular fuzzy neural networks},
Fuzzy Sets and Systems {\bf 177} (2011), 60--79.

\bibitem{Hullermeier}
E. H\"{u}llermeier,
\emph{An approach to modelling and simulation of uncertain dynamical systems},
Internat. J. Uncertain. Fuzziness Knowledge-Based Systems
{\bf 5(2)} (1997), 117--137.

\bibitem{Kaleva}
O. Kaleva,
\emph{Fuzzy differential equations},
Fuzzy Sets and Systems {\bf 24(3)} (1987), 301--317.

\bibitem{Klir2}
G. J. Klir,
\emph{Fuzzy arithmetic with requisite constraints},
Fuzzy Sets and Systems {\bf 91} (1997), 165--175.

\bibitem{klir}
G. J. Klir, B. Yuan,
\emph{Fuzzy sets and fuzzy logic},
Prentice Hall PTR, Upper Saddle River, NJ, 1995.

\bibitem{Lodwick}
W.A. Lodwick,D. Dubois,
\emph{Interval linear systems as a necessary step in fuzzy linear systems},
Fuzzy Sets and Systems {\bf 281} (2015), 227--251.

\bibitem{Lupulescu}
V. Lupulescu,
\emph{On a class of fuzzy functional differential equations},
Fuzzy Sets and Systems {\bf 160(11)} (2009), 547--1562.

\bibitem{Melliani}
S. Melliani,
Semi linear equation with fuzzy parameters,
Notes IFS {\bf 5} (1999), no.~4, 42--47.

\bibitem{Chalco2007}
M.T. Mizukoshi, L.C. Barros, Y. Chalco-Cano, H. Rom\'{a}n-Flores, R.C. Bassanezi,
\emph{Fuzzy differential equations and the extension principle},
Inform. Sci. {\bf 177(17)} (2007), 3627--3635.

\bibitem{Negoita}
C.V. Negoi\c t\u a\,\ D.A. Ralescu,
\emph{ Applications of fuzzy sets to systems analysis},
John Wiley \& Sons, New York, 1975.

\bibitem{Oberguggenberger}
M. Oberguggenberger, S. Pittschmann,
\emph{Differential equations with fuzzy parameters},
Math. Modelling Syst. {\bf 5} (1999), 181--202.

\bibitem{Puri}
M. L. Puri, D.A. Ralescu,
\emph{Differentials of fuzzy functions},
J. Math. Anal. Appl. {\bf 91(2)} (1983), 552--558.

\bibitem{Ramezanadeh}
M. Ramezanadeh, M. Heidari, O.S. Fard, A.H. Borzabadi,
\emph{On the interval differential equation: novel solution methodology},
Adv. Difference Equ. {\bf 2015} (2015), , 23.

\bibitem{Stefanini}
L. Stefanini,
\emph{A generalization of Hukuhara difference and division for interval and fuzzy arithmetic},
Fuzzy Sets and Systems {\bf 161(11)} (2010), 1564--1584.

\bibitem{Stefanini2}
L. Stefanini, B. Bede,
\emph{Generalized Hukuhara differentiability of interval-valued functions and interval differential equations},
Nonlinear Anal. {\bf 71} (2009), 1311--1328.

\bibitem{Stefaninia2006}
L. Stefanini, L. Sorini, M.L. Guerra,
\emph{Parametric representation of fuzzy numbers and application to fuzzy calculus},
Fuzzy Sets and Systems {\bf 157(18)} (2006),  2423--2455.

\bibitem{Zadeh}
L. A. Zadeh,
\emph{The concept of a linguistic variable and its application to approximate reasoning. I},
Information Sci. {\bf 8} (1975), 199--249.

\bibitem{Zadeh1994}
L. A. Zadeh,
\emph{The role of fuzzy logic in modeling, identification and control},
MIC---Model. Identif. Control {\bf 15(3)} (1994),  191--203.

\bibitem{Zadeh2}
L. A. Zadeh,
\emph{Toward a generalized theory of uncertainty (GTU)---an outline},
Inform. Sci. {\bf 172} (2005), 1--40.

\end{thebibliography}
\end{document}